\documentclass[reqno]{amsart}
\usepackage{amssymb, amsmath, amsthm, amscd, mathrsfs}

\usepackage{paralist}
\usepackage{xcolor}
\usepackage{color}
\usepackage{cite}
\usepackage{bigints}
\usepackage{dsfont}
\usepackage{empheq}
\usepackage{amssymb}
\usepackage{cases}
\usepackage{enumitem}
\allowdisplaybreaks

\usepackage{caption}
\usepackage{booktabs}

\usepackage{float}
\usepackage[ruled,vlined,linesnumbered,noresetcount]{algorithm2e}

\usepackage{hyperref}

\newtheorem{theorem}{Theorem}[section]
\newtheorem{lem}[theorem]{Lemma}
\newtheorem{cor}[theorem]{Corollary}
\newtheorem{prop}[theorem]{Proposition}
\newtheorem{defn}[theorem]{Definition}
\newtheorem{rmk}{Remark}

\numberwithin{equation}{section}

\def \d {\mathrm{d}}

\title[Maximal regularity for time-fractional Schrödinger equations]{Maximal regularity for time-fractional Schrödinger equations and application to nonlinear equations}

\author{S. E. Chorfi}
\author{F. Et-tahri}
\author{L. Maniar}
\author{M. Yamamoto}

\address{S. E. Chorfi, L. Maniar, Cadi Ayyad University, UCA, Faculty of Sciences Semlalia, Laboratory of Mathematics, Modeling and Automatic Systems, B.P. 2390, Marrakesh, Morocco}
\address{L. Maniar, The Vanguard Center, University Mohammed VI Polytechnic, Benguerir, Morocco}

\address{F. Et-tahri, Faculty of Sciences-Agadir, Lab-SIV, Ibn Zohr University, B.P. 8106, Agadir, Morocco}
\address{M. Yamamoto, Department of Mathematics, Graduate School of Mathematical Sciences, The University of Tokyo, Tokyo, 153-8914, Komaba, Meguro, Japan}
\address{
Department of Mathematics, Faculty of Science, Zonguldak Bulent Ecevit University, Zonguldak, 67100, Turkey
}

\email{s.chorfi@uca.ac.ma, maniar@uca.ac.ma}
\email{fouad.et-tahri@edu.uiz.ac.ma}
\email{myama@ms.u-tokyo.ac.jp}

\makeatletter %added for 2020 MSC
\@namedef{subjclassname@2020}{%
  \textup{2020} Mathematics Subject Classification}
\makeatother

\subjclass[2020]{35R11, 35B65, 35D30, 35Q55, 33E12}
 \keywords{Fractional Schrödinger equation, maximal regularity, weak solution, quasilinear equation, Mittag-Leffler functions}
 
\begin{document}
\begin{abstract}
We study the maximal regularity problem for abstract time-fractional Schrödinger equations $\partial_t^\alpha(u-u_0) -\mathrm{i} A u=f$, with a fractional derivative $\partial_t^\alpha$ of order $\alpha \in (0,1)$. We assume that $A$ is a self-adjoint operator with compact resolvent on a Hilbert space $H$. First, we prove the maximal $L^2$-regularity by leveraging properties of Mittag-Leffler functions with an imaginary argument. Compared to existing results for the subdiffusion equations, our proof avoids using the complete monotonicity of Mittag-Leffler functions, which seems difficult to prove within the setting of an imaginary argument. Then, we prove the maximal $L^p$-regularity for $p\in (1,\infty)$ using the operator-valued version of Mikhlin's multiplier theorem. Finally, we apply the maximal regularity results to prove the local well-posedness of quasilinear and semilinear time-fractional Schr\"odinger equations.
\end{abstract}

\maketitle

\section{Introduction and motivation}
The time-fractional Schrödinger equation has attracted significant interest because of its capacity to capture anomalous diffusion and memory effects in quantum systems. Applications include quantum mechanics, optics and photonics, condensed matter physics, and plasma physics; see, for instance, \cite{Gr19}, \cite{La18}, \cite{CC23}, and \cite{Wang07}, along with the references therein. The incorporation of fractional time dynamics breaks the usual unitarity and time-reversal symmetry, allowing the probability density to either dissipate or accumulate over time.

More precisely, we consider the time-fractional Schrödinger equation 
\begin{equation}\label{fseq0}
\partial_t^\alpha(u-u_0) -\mathrm{i} Au=f, \qquad t \in (0,T),
\end{equation}
where $T>0$ and $\partial_t^\alpha$ is a fractional derivative of order $\alpha\in (0,1)$ (see Section \ref{sec22}).

Inspired by time-fractional diffusion equations, M. Naber \cite{Na04} initiated the study of the time-fractional Schrödinger equation by giving a variant of \eqref{fseq0} with a factor $\mathrm{i}^{-\alpha}$ instead of $\mathrm{i}$. On the other hand, B.N. Achar et al. \cite{Ach13} obtained a special case of \eqref{fseq0} via the Feynman path integral method, allowing to recover the classical Schrödinger equation as $\alpha\to 1^-$. Moreover, H. Emamirad and A. Rougirel \cite{ER20} have proposed a time-fractional equation that preserves the classical quantum mechanical properties.

Maximal regularity for parabolic evolution equations means that $\partial_t u$ and $Au$ enjoy the same regularity as $f$, and this property has attracted considerable attention as a powerful tool for studying the well-posedness of nonlinear and non-autonomous equations via linearization and fixed-point arguments; see, for instance, \cite{PS16} and the references therein.

Maximal regularity for time-fractional evolution equations like \eqref{fseq0} means that $\partial_t^\alpha (u-u_0)$ and $Au$ enjoy the same regularity as $f$. It has recently garnered an increasing interest in studying regularity properties of models with anomalous or memory effects. It is well known that classical parabolic equations enjoy maximal regularity. It would therefore be natural to expect this property to be inherited by fractional parabolic equations. In this paper, we prove a surprising result on the maximal regularity of the time-fractional Schrödinger equation, which is not satisfied by the classical Schrödinger equation.

As pioneering works for parabolic equations, we can refer to P. E. Sobolevskii \cite{Sob64} and O. A. Ladyzhenskaya et al. \cite{La68}. Then, G. Da Prato and P. Grisvard \cite{DaP75} reformulated the problem using analytic semigroups, leading to two main approaches: the first one relies on singular integral operators; see, e.g., L. De Simon \cite{DeS64}, and M. Hieber and J. Prüss \cite{HP97} for Gaussian kernel estimates. The second approach is the operator-sum method, which yields interesting results via interpolation spaces \cite{DaP75}. We refer to the works of G. Dore and A. Venni, e.g., \cite{DV87} and \cite{Do93}. We also refer to H. Amann \cite{Am95}, P. C. Kunstmann and L. W. Weis \cite{KW04}, R. Denk et al. \cite{DHP03}, but the list is far from being exhaustive. As a recent reference, we mention W. Arendt and M. Sauter \cite{AS26} for generalized boundary conditions in time.

As first results for time-fractional equations, we can refer to the monograph by J. Pr\"uss \cite{Pr93}, where the author studied more general evolutionary integral equations. However, it is not clear whether one can obtain sharp regularity results for time-fractional equations. In \cite{CGL00} and \cite{CGL01}, P. Clement et al. proved maximal regularity results for abstract problems with sectorial operators. In \cite{Za05}, R. Zacher established maximal $L^p$-regularity results for abstract parabolic Volterra equations using operator-valued Fourier multiplier techniques. Moreover, in \cite{Za09}, the author developed a framework for weak solutions of abstract evolutionary integro‑differential equations in Hilbert spaces, establishing existence, uniqueness, and regularity results under general assumptions.

In \cite{Baz01}, E. Bazhlekova studied maximal $L^p$-regularity of abstract linear problems with the Riemann-Liouville derivative. The proofs are based on sums of operators and $\mathcal{R}$-boundedness. In the $L^2$-setting, L. Sakamoto and M. Yamamoto \cite{SY11} proved a maximal regularity result for fractional diffusion equations assuming an extra regularity on the inhomogeneous term. The proof relies on the complete monotonicity of Mittag-Leffler functions. In \cite{Gu19}, D. Guidetti proved maximal regularity results in spaces of continuous and Hölder continuous functions using interpolation spaces. For the subdiffusion equation with a time-dependent coefficient, we refer to B. Jin et al. \cite{JLZ19}. We also refer to A. Kubica et al. \cite{KRY20} for maximal $L^2$-regularity results for equations with variable coefficients. The reader may also refer to the monograph by B. Jin \cite{Ji21}, which discusses some maximal regularity results for subdiffusion. Recently, M. Achache \cite{Ach22} and \cite{Ach24} have proven maximal $L^p$-regularity results for non-autonomous problems in Hilbert spaces using the method of sesquilinear forms. Finally, we refer to G. Floridia et al. \cite{FGY25} for time-fractional evolution equations in Banach spaces. In particular, the authors have proven a maximal regularity result assuming an extra space regularity of the inhomogeneous term.

The article is organized as follows. In Section \ref{sec2}, we recall some preliminary results that will be used in the subsequent sections. Moreover, we present the necessary tools for fractional differentiation. Section \ref{sec4} is devoted to the notion of weak solution and some intermediate regularity results. Section \ref{sec5} investigates the maximal regularity for abstract fractional Schrödinger equations. First, we prove the maximal $L^2$-regularity without using the complete monotonicity of Mittag-Leffler functions. Then, we prove the maximal $L^p$-regularity for any $p\in (1,\infty)$ using a version of Mikhlin's multiplier theorem. In Section \ref{sec6}, we apply the maximal regularity to prove well-posedness results for quasilinear and semilinear Schr\"odinger equations. Finally, we give some final comments and perspectives.

\subsection*{Notations}
In what follows, we consider a Hilbert space $(H,\langle \cdot,\cdot \rangle)$, with the corresponding norm $\Vert\cdot\Vert:=\sqrt{\langle \cdot,\cdot \rangle}$. We assume throughout that $A:D(A)\subset H\rightarrow H$ is a self-adjoint operator, negative-definite, and with compact resolvent. We denote by $\sigma(-A) = \{\lambda_n\}_{n\in \mathbb{N}}\subset \mathbb{R}^{+}$ the spectrum of $-A$ and by $\{\varphi_n\}_{n\in \mathbb{N}}\subset D(A)$ an associated orthonormal basis of eigenvectors. Therefore, the operator $A$ satisfies
$$\langle Au,u \rangle <0 \qquad \forall u\in D(A)\setminus \{0\},$$
and the spectrum $\sigma(-A) = \{\lambda_n\}_{n\in \mathbb{N}}$ is such that
$$0<\lambda_1 \le \lambda_2 \le \cdots \le \lambda_n \le \cdots\to \infty.$$
We will denote by \(\mathcal{B}(H)\) the space of bounded linear operators on $H$ equipped with the operator norm. Then, $A^{-1}\in \mathcal{B}(H)$ and $\|A^{-1}\|\le \frac{1}{\lambda_1}$. Therefore, we can endow $D(A)$ with the equivalent norm
$$\|u\|_{D(A)}:=\|A u\|, \qquad  u\in D(A).$$
For any $z\in \rho(A)$ (the resolvent set), we denote by $R(z,A)=(z I_H-A)^{-1}$ the resolvent operator of $A$, where $I_H$ is the identity operator on $H$.

For any $p>1$, we denote by $q$ its conjugate exponent defined by $\dfrac{1}{p} + \dfrac{1}{q} = 1$. We will also use the weak Lebesgue spaces $L^{p,\infty}(0,T;V),$ $1 \le p < \infty,$ where $(V,\|\cdot\|_V)$ is a Hilbert space (see, e.g., \cite[Section 1.18.6]{Tr78}). The $L^{p,\infty}(0,T;V)$ quasi-norm of a measurable function $v : (0,T) \to V$ is given by
\[
\|v\|_{L^{p,\infty}(0,T;V)} := \sup_{\lambda>0} \, \lambda\Big|\Big\{ t\in (0,T)\colon \|v(t)\|_V>\lambda \Big\}\Big|^{\frac{1}{p}}.
\]
This functional fails to satisfy the triangle inequality and therefore does not
define a norm. However, for any $v \in L^p(0,T;V)$, one has
\[
\|v\|_{L^{p,\infty}(0,T;V)} \le \|v\|_{L^p(0,T;V)},
\]
which implies the continuous embedding $L^p(0,T;V) \subset L^{p,\infty}(0,T;V)$. Moreover, the spaces $L^{p,\infty}(0,T;V)$ are complete, and for $p>1$ they are Banach spaces.

For $\theta\in (0,1)$ and $1\leq p \leq \infty,$ we denote by $(X,Y)_{\theta,p}$ the real interpolation space for an interpolation couple $(X, Y)$ of Banach spaces; see, e.g., \cite[Section 1.5]{Tr78}. We write $X \hookrightarrow Y$ for a continuous embedding. Finally, we denote by $\overline{B}_X(r)$ the closed ball of radius $r>0$ centered at the origin in the space $X$.

\section{Preliminary results}\label{sec2}
Here, we present some preparatory results on Mittag-Leffler functions and fractional differentiation for future use.

\subsection{Mittag-Leffler functions}
We recall the two-parametric Mittag-Leffler function defined by
\begin{align*}
    E_{\alpha,\beta}(z)=\sum_{n=0}^{\infty}\frac{z^n}{\Gamma(\alpha n+\beta)} \qquad \forall z\in\mathbb{C},
\end{align*}
where $\alpha>0$, $\beta\in \mathbb{R}$. We refer to \cite{ML03} and the monographs \cite{Gor20, podlubny99} for a detailed study of this function.

We also recall the asymptotic expansions of the Mittag-Leffler function near $\infty$. We refer to pages 32-35 in \cite{podlubny99} for the proofs.
\begin{lem}
Let $0<\alpha<1$ and $\beta \in \mathbb{R}$ be arbitrary. Let $\mu$ be such that $\frac{\pi \alpha}{2}<\mu<\min \{\pi, \pi \alpha\}$. Then, for any $N\ge 1$,
\begin{equation}\label{asymptotic expansions}
    E_{\alpha, \beta}(z)=-\sum_{k=1}^N\frac{1}{\Gamma(\beta-\alpha k)}\frac{1}{z^k} + \mathcal{O}\left(\frac{1}{|z|^{1+N}} \right), \; |z|\to \infty, \; \mu \leq |\arg (z)| \leq \pi.
\end{equation}
Moreover, there exist positive constants $C$ depending on $(\alpha, \beta, \mu)$ such that
\begin{equation}\label{boundedness M-L}
\left|E_{\alpha, \beta}(z)\right| \leq \frac{C}{1+|z|}, \qquad \mu \leq |\arg (z)| \leq \pi.
\end{equation}
\end{lem}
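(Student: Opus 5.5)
The approach is the classical Hankel-contour argument, as in Podlubny's monograph. Start from the integral representation of $1/\Gamma$,
\[
\frac{1}{\Gamma(s)}=\frac{1}{2\pi \mathrm{i}}\int_{\mathrm{Ha}} e^{\zeta}\zeta^{-s}\,\d\zeta ,
\]
valid for all $s\in\mathbb{C}$. Here $\mathrm{Ha}$ is a Hankel contour: it comes from $-\infty$ below the negative real axis, encircles the origin, and returns above it. Substitute $\zeta=w^{1/\alpha}$ and sum the geometric series in $z/w$ inside the integral. For $0<\alpha<1$ this yields
\[
E_{\alpha,\beta}(z)=\frac{1}{2\pi\alpha \mathrm{i}}\int_{\gamma(\epsilon,\delta)} \frac{e^{w^{1/\alpha}}\,w^{(1-\beta)/\alpha}}{w-z}\,\d w .
\]
The contour $\gamma(\epsilon,\delta)$ consists of the two rays $\arg w=\pm\delta$, $|w|\ge\epsilon$, joined by the arc $|w|=\epsilon$, $|\arg w|\le\delta$. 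We take $\pi\alpha/2<\delta<\min\{\pi,\pi\alpha\}$, so that $\Re(w^{1/\alpha})=|w|^{1/\alpha}\cos(\delta/\alpha)<0$ on the rays and the integral converges. The formula holds for $z$ lying to the left of the contour, i.e. $|\arg z|>\delta$. Given $\mu$ as in the statement, choose $\delta\in(\pi\alpha/2,\mu)$. Then every $z$ with $\mu\le|\arg z|\le\pi$ and $|z|>\epsilon$ lies in the region where the representation is valid.

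Next, expand $\frac{1}{w-z}$ exactly as
\[
\frac{1}{w-z}=-\sum_{k=1}^N\frac{w^{k-1}}{z^k}+\frac{w^N}{z^N(w-z)} .
\]
Insert this into the integral. Reversing the substitution, the $k$-th term becomes
\[
-\frac{1}{z^k}\frac{1}{2\pi\alpha\mathrm{i}}\int_\gamma e^{w^{1/\alpha}}w^{(1-\beta)/\alpha+k-1}\,\d w=-\frac{1}{z^k\,\Gamma(\beta-\alpha k)} ,
\]
by the Hankel formula with $s=\beta-\alpha k$. This produces the finite sum in \eqref{asymptotic expansions}. The remainder is
\[
R_N(z)=\frac{1}{2\pi\alpha\mathrm{i}\,z^N}\int_\gamma \frac{e^{w^{1/\alpha}}w^{(1-\beta)/\alpha+N}}{w-z}\,\d w .
\]

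The main step is the uniform bound $|w-z|\ge c|z|$ for $w\in\gamma$ and $z$ in the sector. On the rays this follows because $|\arg z-\arg w|\ge\mu-\delta>0$. Indeed, the distance from $z$ to the ray of angle $\pm\delta$ is at least $|z|\sin(\mu-\delta)$ when the angular gap is below $\pi/2$, and at least $|z|$ otherwise. On the arc, $|w-z|\ge|z|-\epsilon\ge|z|/2$ once $|z|\ge2\epsilon$. The integral $\int_\gamma|e^{w^{1/\alpha}}||w|^{(1-\Re\beta)/\alpha+N}|\d w|$ converges thanks to the exponential decay on the rays. Hence $|R_N(z)|\le C|z|^{-N-1}$, which gives \eqref{asymptotic expansions}.

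For \eqref{boundedness M-L}, take $N=1$. This gives $|E_{\alpha,\beta}(z)|\le C/|z|$ for $|z|\ge R_0$ in the sector. On the compact set $|z|\le R_0$, the entire function $E_{\alpha,\beta}$ is bounded. Combining the two regions yields $|E_{\alpha,\beta}(z)|\le C/(1+|z|)$ with $C=C(\alpha,\beta,\mu)$.

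The only delicate points are the following. First, one must justify the interchange of summation and integration when deriving the contour representation: first take $|z|<\epsilon$, where the geometric series converges uniformly on $\gamma$, and then extend to the exterior region by analytic continuation. Second, the uniform lower bound on $|w-z|$ is what requires $\mu$ to be strictly larger than $\delta$. I expect the analytic-continuation step to be the main technical obstacle, though it is standard.
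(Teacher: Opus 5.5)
Your proposal is correct and follows the same route the paper relies on: the paper gives no proof of its own but cites Podlubny (pp.~32--35, Theorems~1.4 and~1.6), which likewise uses the contour representation over $\gamma(\epsilon,\delta)$ with $\pi\alpha/2<\delta<\mu$, the finite geometric expansion of $1/(w-z)$ together with Hankel's formula for the coefficients, a lower bound $|w-z|\ge c\,|z|$ on the contour, and the case $N=1$ plus boundedness on a compact set for the uniform estimate. One step should be stated explicitly: before substituting $w=\zeta^{\alpha}$, rotate the Hankel contour by Cauchy's theorem to the rays $\arg\zeta=\pm\delta/\alpha$, which is legitimate because $e^{\zeta}$ decays for $\pi/2<|\arg\zeta|\le\pi$; the unrotated contour only yields $\delta=\pi\alpha$, which exceeds $\mu$.
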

Consequently, we obtain:
\begin{cor}
\label{est12}
Let $0<\alpha<1$ and $\beta\in \mathbb{R}$. There exists a constant $C_0>0$, depending only on $\alpha$ and $\beta$, such that
\begin{align}\label{es0}
& \left|E_{\alpha, \beta}\left(\mathrm{i} t\right)\right| \leq \frac{C_0}{1+|t|} \leq C_0 \qquad \text{for all } t\in \mathbb{R}.
\end{align}
\end{cor}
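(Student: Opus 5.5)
This follows from the preceding lemma once we check that the imaginary axis lies in the admissible sector. For $t\neq 0$, the argument of $z=\mathrm{i}t$ is $\pm\frac{\pi}{2}$, so $|\arg(\mathrm{i}t)|=\frac{\pi}{2}$. Since $0<\alpha<1$, we have $\frac{\pi\alpha}{2}<\min\{\pi,\pi\alpha\}=\pi\alpha$. We need some $\mu$ with
$$\frac{\pi\alpha}{2}<\mu<\pi\alpha \quad\text{and}\quad \mu\le \frac{\pi}{2}.$$
Because $\frac{\pi\alpha}{2}<\frac{\pi}{2}$, we can take
$$\mu:=\min\Big\{\tfrac{\pi}{2},\,\tfrac{3\pi\alpha}{4}\Big\}.$$
This lies strictly between $\frac{\pi\alpha}{2}$ and $\pi\alpha$: indeed $\frac{3\pi\alpha}{4}$ lies strictly between them, and if $\frac{\pi}{2}<\frac{3\pi\alpha}{4}$ then $\frac{\pi}{2}<\pi\alpha$ as well. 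It also satisfies $\mu\le\frac{\pi}{2}\le\pi$. Hence every $z=\mathrm{i}t$ with $t\neq0$ satisfies $\mu\le|\arg z|\le\pi$. The choice of $\mu$ depends only on $\alpha$, so the resulting constant depends only on $(\alpha,\beta)$.

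Applying \eqref{boundedness M-L} with this $\mu$ gives
$$|E_{\alpha,\beta}(\mathrm{i}t)|\le \frac{C}{1+|\mathrm{i}t|}=\frac{C}{1+|t|}\qquad (t\neq 0).$$

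The case $t=0$ is not covered by the sector condition, since $\arg 0$ is undefined. There $E_{\alpha,\beta}(0)=\frac{1}{\Gamma(\beta)}$, which is finite; it is interpreted as $0$ when $\beta$ is a nonpositive integer, a pole of $\Gamma$. Alternatively, $E_{\alpha,\beta}$ is entire, hence continuous, so the bound at $t=0$ also follows by letting $t\to0$ in the estimate for $t\neq0$.

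Setting
$$C_0:=\max\Big\{C,\ \tfrac{1}{|\Gamma(\beta)|}\Big\},$$
with the convention $\frac{1}{|\Gamma(\beta)|}=0$ at the poles of $\Gamma$, the first inequality in \eqref{es0} holds for all $t\in\mathbb{R}$. The second inequality, $\frac{C_0}{1+|t|}\le C_0$, is immediate since $1+|t|\ge1$.

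The only step needing care is choosing $\mu$ so that the imaginary axis lies inside the sector $\mu\le|\arg z|\le\pi$. This is exactly where the restriction $\alpha<1$ is used.
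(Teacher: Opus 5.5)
Your proof is correct and follows the paper's route: the paper states this corollary as an immediate consequence of \eqref{boundedness M-L}, using that $|\arg(\mathrm{i}t)|=\frac{\pi}{2}$ lies in the admissible sector when $\alpha<1$. Your explicit choice $\mu=\min\{\frac{\pi}{2},\frac{3\pi\alpha}{4}\}$ and your separate treatment of $t=0$ supply details that the paper leaves implicit.
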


\subsection{Fractional differentiation}\label{sec22}
Let $\alpha > 0$ and $p>1$. We define the Riemann-Liouville fractional integral operator by
\begin{equation}
J^{\alpha}v(t):= \frac{1}{\Gamma(\alpha)}\int^t_0 (t-s)^{\alpha-1}
v(s)\, \d s, \qquad v\in L^p(0,T;H).
\end{equation}
Then, one can prove that the operator $J^{\alpha}: L^p(0,T;H) \rightarrow J^{\alpha}L^p(0,T;H)\subset L^p(0,T;H)$ is one-to-one and 
onto. We denote by $J^{-\alpha}:=(J^{\alpha})^{-1}$ its inverse operator. Then, we define the functional space  
\begin{equation}
W_{\alpha,p}(0,T;H) := J^{\alpha}L^p(0,T;H),
\end{equation}
with the corresponding norm $\Vert v\Vert_{W_{\alpha,p}(0,T;H)}:= \Vert J^{-\alpha}v \Vert_{L^p(0,T;H)}$.
We can check that $W_{\alpha,p}(0,T;H)$ is a Banach space with respect to the norm $\Vert \cdot\Vert_{W_{\alpha,p}(0,T;H)}$. Moreover, for the case $p=2,$ we set
$$H_{\alpha}(0,T;H):=W_{\alpha,2}(0,T;H).$$
\begin{prop} \label{fractional_space}
Let $\alpha>\frac{1}{p}$. Then the continuous embedding
$$W_{\alpha,p}(0,T;H) \hookrightarrow \{u\in C([0,T];H) \colon u(0)=0\}$$
holds.
\end{prop}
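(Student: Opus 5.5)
Since every $u\in W_{\alpha,p}(0,T;H)$ can be written as $u=J^{\alpha}v$ with $v=J^{-\alpha}u\in L^p(0,T;H)$ and $\|u\|_{W_{\alpha,p}(0,T;H)}=\|v\|_{L^p(0,T;H)}$, the whole statement reduces to the following estimate: for $\alpha>\frac1p$, the Riemann--Liouville integral $J^\alpha$ maps $L^p(0,T;H)$ boundedly into $C([0,T];H)$, and $(J^\alpha v)(0)=0$. The natural tool is Hölder's inequality with the conjugate exponent $q$. The condition $\alpha>\frac1p$ is exactly $(\alpha-1)q>-1$, so the kernel $s\mapsto s^{\alpha-1}$ belongs to $L^q(0,T)$.

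\textbf{Step 1: the pointwise bound.} For $t\in[0,T]$, Hölder's inequality gives
\[
\|J^\alpha v(t)\|\le \frac{1}{\Gamma(\alpha)}\Big(\int_0^t (t-s)^{(\alpha-1)q}\,\d s\Big)^{1/q}\|v\|_{L^p(0,t;H)}
= \frac{t^{\alpha-\frac1p}}{\Gamma(\alpha)\,((\alpha-1)q+1)^{1/q}}\,\|v\|_{L^p(0,T;H)} .
\]
This shows that the Bochner integral defining $J^\alpha v(t)$ is well defined for every $t$, not merely almost every $t$. It also gives the uniform bound
\[
\sup_{t\in[0,T]}\|J^\alpha v(t)\|\le C\,T^{\alpha-1/p}\|v\|_{L^p(0,T;H)} .
\]
Since $\alpha-\frac1p>0$, the factor $t^{\alpha-1/p}$ tends to $0$ as $t\to0$, so $J^\alpha v(t)\to 0=J^\alpha v(0)$.

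\textbf{Step 2: continuity on $[0,T]$.} This is the only step needing any care. My first attempt is to fix $0\le t_1<t_2\le T$ and split
\[
J^\alpha v(t_2)-J^\alpha v(t_1)=\frac{1}{\Gamma(\alpha)}\int_{t_1}^{t_2}(t_2-s)^{\alpha-1}v(s)\,\d s+\frac{1}{\Gamma(\alpha)}\int_0^{t_1}\big[(t_2-s)^{\alpha-1}-(t_1-s)^{\alpha-1}\big]v(s)\,\d s .
\]
By Hölder, the first term is bounded by $C(t_2-t_1)^{\alpha-1/p}\|v\|_{L^p}$. The second term is bounded by $\|v\|_{L^p}$ times the $L^q(0,t_1)$ norm of the kernel difference, so it suffices to show that norm tends to $0$.

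If $\alpha<1$, the kernel difference has constant sign. Using $|a-b|^q\le |a^q-b^q|$ for $a,b\ge0$ and $q\ge1$, its $q$-th power integrates to at most
\[
\int_0^{t_1}\big[(t_1-s)^{(\alpha-1)q}-(t_2-s)^{(\alpha-1)q}\big]\,\d s\le C(t_2-t_1)^{(\alpha-1)q+1}.
\]
This gives Hölder continuity with exponent $\alpha-\frac1p$.

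If $\alpha\ge1$, the kernel is continuous and bounded on $[0,T]$, so the difference tends to $0$ uniformly, and again the $L^q$ norm tends to $0$.

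A cleaner alternative, which avoids computing with the kernel, is density. Step 1 shows $J^\alpha:L^p(0,T;H)\to L^\infty(0,T;H)$ is bounded with the sup-norm. One checks directly that $J^\alpha v$ is continuous and vanishes at $0$ for $v\in C([0,T];H)$, a dense subspace of $L^p(0,T;H)$. Since uniform limits of continuous functions are continuous, $J^\alpha v$ is continuous for every $v\in L^p(0,T;H)$. I would present this density argument and keep the kernel estimate as a remark.

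\textbf{Conclusion.} Combining the two steps, $u=J^\alpha v\in C([0,T];H)$ with $u(0)=0$, and
\[
\|u\|_{C([0,T];H)}\le C(\alpha,p,T)\,\|u\|_{W_{\alpha,p}(0,T;H)} .
\]
This is the asserted continuous embedding.
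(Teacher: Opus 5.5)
Your proof is correct. Step 1 is the paper's closing estimate $\|\Gamma(\alpha)J^\alpha u(t)\|\le Ct^{\alpha-\frac1p}\|u\|_{L^p(0,T;H)}$. Your kernel computation uses the same splitting as the paper's $I_1-I_2$, namely $\int_{t_1}^{t_2}$ plus $\int_0^{t_1}$. The difference lies in how you treat the second piece:
\begin{itemize}
\item The paper uses $|a^{1-\alpha}-b^{1-\alpha}|\le|a-b|^{1-\alpha}$ and peels off a small power, $(t-s)^{1-\alpha}=(t-s)^{1-\alpha-\varepsilon}(t-s)^{\varepsilon}$ with $0<\varepsilon<\alpha-\frac1p$. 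It then applies H\"older with the still $L^q$-integrable kernel $(t_0-s)^{\alpha-1-\varepsilon}$. This only yields the modulus $(t-t_0)^{\varepsilon}$ for $I_2$.
\item You bound the $L^q(0,t_1)$-norm of the kernel difference directly, using the superadditivity $(a-b)^q\le a^q-b^q$ for $a\ge b\ge 0$ and computing the integral explicitly. This gives the sharp H\"older exponent $\alpha-\frac1p$ for both pieces. It also covers $\alpha\ge 1$, whereas the paper's argument relies on $0<1-\alpha<1$.
\end{itemize}

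The density argument you say you would actually present is a genuinely different route. It needs no kernel-difference estimate at all. It uses only three ingredients: the uniform bound of Step 1, continuity of $J^\alpha v$ for continuous $v$, and density of $C([0,T];H)$ in $L^p(0,T;H)$. You rightly note that the uniform bound holds for every $t$, not merely almost every $t$, since that is what makes $J^\alpha v_n\to J^\alpha v$ uniformly on $[0,T]$. Continuity of $J^\alpha v$ for continuous $v$ follows from dominated convergence in $\int_0^T s^{\alpha-1}\mathbf{1}_{\{s<t\}}v(t-s)\,\mathrm{d}s$. This route is shorter and less computational. However, it gives no modulus of continuity, and it needs $p<\infty$ for the density step; that is harmless for the range $1<p<\infty$ used in the paper. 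The explicit kernel estimates, yours or the paper's, instead give quantitative H\"older continuity and work verbatim for $p=\infty$.
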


\begin{proof}
The proof is a modification of \cite[Theorem 2.1.4]{Ya25} concerning the case $p=2$.

Let $u\in L^p(0,T;H)$ and $0 \leq t_0<t \leq T$. We have
\begin{align*}
\Gamma(\alpha)\left(J^\alpha u(t)-J^\alpha u(t_0)\right)= & \int_{t_0}^t(t-s)^{\alpha-1} u(s) \d s \\
& -\int_0^{t_0}\left(\left(t_0-s\right)^{\alpha-1}-(t-s)^{\alpha-1}\right) u(s) \d s=: I_1-I_2.
\end{align*}
Since $\alpha>\frac{1}{p}$, $q(\alpha-1)>-1$. Then, we estimate $I_1$ by Hölder's inequality as follows
\begin{align}
\left\|I_1\right\| & \leq \int_{t_0}^t(t-s)^{\alpha-1}\|u(s)\| \d s \notag\\
& \leq\left(\int_{t_0}^t(t-s)^{q(\alpha-1)} \d s\right)^{\frac{1}{q}}\left(\int_{t_0}^t\|u(s)\|^p \d s\right)^{\frac{1}{p}} \notag\\
&=\left(\frac{\left(t-t_0\right)^{q(\alpha-1)+1}}{q(\alpha-1)+1}\right)^{\frac{1}{q}}\|u\|_{L^p(0, T ; H)}. \label{I1}
\end{align}
Next, we estimate $I_2$. By $\alpha>\frac{1}{p}$, we choose a small $\varepsilon>0$ so that $\alpha>\frac{1}{p}+\varepsilon$. Since $0<1-\alpha<1$, we have $\left|a^{1-\alpha}-b^{1-\alpha}\right| \leq|a-b|^{1-\alpha}$ for all $a, b \geq 0$. Then
$$
\left|(t-s)^{1-\alpha}-\left(t_0-s\right)^{1-\alpha}\right| \leq \left|t-t_0\right|^{1-\alpha}.
$$
Therefore, using $t-s \geq t-t_0$ and $t-s \geq t_0-s$ for $0<s<t_0<t$, we obtain
\begin{align*}
\left\|I_2\right\| & \leq \int_0^{t_0}\left|\left(t_0-s\right)^{\alpha-1}-(t-s)^{\alpha-1}\right|\|u(s)\| \d s \\
& =\int_0^{t_0} \frac{\left|(t-s)^{1-\alpha}-\left(t_0-s\right)^{1-\alpha}\right|}{(t-s)^{1-\alpha}\left(t_0-s\right)^{1-\alpha}}\|u(s)\| \d s \\
& \leq \int_0^{t_0} \frac{\left(t-t_0\right)^{1-\alpha}}{(t-s)^{1-\alpha-\varepsilon}(t-s)^{\varepsilon}\left(t_0-s\right)^{1-\alpha}}\|u(s)\| \d s \\
& \leq \frac{\left(t-t_0\right)^{1-\alpha}}{\left(t-t_0\right)^{1-\alpha-\varepsilon}} \int_0^{t_0}\left(t_0-s\right)^{\alpha-1-\varepsilon}\|u(s)\| \d s\\
& \leq\left(t-t_0\right)^{\varepsilon}\left(\int_0^{t_0}\left(t_0-s\right)^{q(\alpha-1-\varepsilon)} \d s\right)^{\frac{1}{q}}\left(\int_0^{t_0}\|u(s)\|^p \d s\right)^{\frac{1}{p}} \\
& \leq\left(t-t_0\right)^{\varepsilon}\left(\frac{t_0^{q(\alpha-1-\varepsilon)+1}}{q(\alpha-1-\varepsilon)+1}\right)^{\frac{1}{q}}\|u\|_{L^p(0, T; H)},
\end{align*}
where $q(\alpha-1-\varepsilon)+1>0$ because $\alpha>\frac{1}{p}+\varepsilon$. Hence,
$$
\left\|\Gamma(\alpha)\left(J^\alpha u(t)-J^\alpha u\left(t_0\right)\right)\right\| \leq C\left(\left(t-t_0\right)^{\alpha-\frac{1}{p}}+\left(t-t_0\right)^{\varepsilon} T^{\alpha-\frac{1}{p}-\varepsilon}\right)\|u\|_{L^p(0, T ; H)},
$$
This implies that $J^\alpha u \in C([0, T] ; H)$.
Moreover, by \eqref{I1}, we have
\begin{align*}
\left\|\Gamma(\alpha) J^\alpha u(t)\right\| & \leq \left(\frac{t^{q(\alpha-1)+1}}{q(\alpha-1)+1}\right)^{\frac{1}{q}} \|u\|_{L^p(0, T ; H)} \leq C t^{\alpha-\frac{1}{p}}\|u\|_{L^p(0, T ; H)}.
\end{align*}
Therefore, $\lim\limits_{t \downarrow 0}\left\|J^\alpha u(t)\right\|=0$. This completes the proof.
\end{proof}

Next, we define the time-fractional derivative operator in $W_{\alpha,p}(0,T;H)$ by 
\begin{equation}\label{deffd}
\partial_t^\alpha := J^{-\alpha}, \qquad D(\partial_t^\alpha) = W_{\alpha,p}(0,T;H).
\end{equation}
We can prove that $\partial_t^\alpha : D(\partial_t^\alpha) = W_{\alpha,p}(0,T;H)\, \rightarrow
\, L^p(0,T;H)$ is a closed operator. The notation $(\partial_t^\alpha)'$ stands for the dual operator of $\partial_t^\alpha : W_{\alpha,p}(0,T;H) \rightarrow L^p(0,T;H)$.

For $1 \le r \le \infty$, we define an operator $\tau_T: L^r(0,T;H) \rightarrow L^r(0,T;H)$ by 
$(\tau_T v)(t) = v(T-t)$.
Then, we can prove (see \cite{Ya18} for the proof):
\begin{lem}
Let $1 < p < \infty$. Then, 
$$
D((\partial_t^\alpha)') = \tau_T W_{\alpha,q}(0,T;H), \quad
(\partial_t^\alpha)'u = \tau_T\partial_t^\alpha (\tau_T u).
$$
\end{lem}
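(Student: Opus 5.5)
The plan is to identify the adjoint of $\partial_t^\alpha = J^{-\alpha}$ through the adjoint of $J^\alpha$. Take the pairing between $L^p(0,T;H)$ and $L^q(0,T;H)$ given by $\int_0^T\langle u,v\rangle\,\d t$. Fubini's theorem then gives, for $u\in L^p(0,T;H)$ and $v\in L^q(0,T;H)$,
\[
\int_0^T \langle J^\alpha u(t), v(t)\rangle\, \d t = \int_0^T \Big\langle u(s), \frac{1}{\Gamma(\alpha)}\int_s^T (t-s)^{\alpha-1} v(t)\,\d t\Big\rangle \d s .
\]
The inner operator is the backward (Weyl-type) integral, and the substitution $t\mapsto T-t$ shows it equals $\tau_T J^\alpha \tau_T v$. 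So $(J^\alpha)' = \tau_T J^\alpha \tau_T$ on $L^q(0,T;H)$, which is bounded since $J^\alpha$ and $\tau_T$ are. Fubini applies because the kernel is absolutely integrable against $\|u\|\,\|v\|$, by Young's inequality.

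Next I prove the two inclusions for $D((\partial_t^\alpha)')$.

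\emph{First inclusion.} Let $w=\tau_T J^\alpha g$ with $g\in L^q(0,T;H)$. For $u\in W_{\alpha,p}(0,T;H)$ write $u=J^\alpha f$ with $f=\partial_t^\alpha u\in L^p(0,T;H)$. Then
\[
\int_0^T\langle \partial_t^\alpha u, w\rangle\,\d t=\int_0^T\langle f,\tau_T J^\alpha g\rangle\,\d t=\int_0^T\langle J^\alpha f,\tau_T g\rangle\,\d t=\int_0^T\langle u,\tau_T g\rangle\,\d t,
\]
where the middle equality is the adjoint identity above combined with $\tau_T^2=I$. The right-hand side is continuous in $u$ for the $L^p$ norm, so $w\in D((\partial_t^\alpha)')$. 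Moreover $(\partial_t^\alpha)'w=\tau_T g=\tau_T\partial_t^\alpha(\tau_T w)$, since $\tau_T w=J^\alpha g$.

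\emph{Second inclusion.} Let $w\in D((\partial_t^\alpha)')$ and set $h:=(\partial_t^\alpha)'w\in L^q(0,T;H)$, so that $\int\langle \partial_t^\alpha u,w\rangle=\int\langle u,h\rangle$ for every $u\in W_{\alpha,p}(0,T;H)$. Put $u=J^\alpha f$ with $f$ arbitrary in $L^p(0,T;H)$. This gives
\[
\int\langle f,w\rangle=\int\langle J^\alpha f,h\rangle=\int\langle f,\tau_T J^\alpha\tau_T h\rangle .
\]
Since $L^q$ is the dual of $L^p$ (with $H$ a Hilbert space and $1<p<\infty$), it follows that $w=\tau_T J^\alpha(\tau_T h)\in\tau_T W_{\alpha,q}(0,T;H)$. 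This proves equality of the domains together with the stated formula.

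Subtleties to check:
\begin{itemize}
\item $\partial_t^\alpha$ is densely defined, which is needed for the dual operator to be single-valued. This holds because $W_{\alpha,p}(0,T;H)$ contains $C_c^\infty((0,T);H)$: for smooth compactly supported $u$, the function $J^{1-\alpha}u'$ is in $L^p$ and $J^{\alpha}J^{1-\alpha}u'=J^1u'=u$.
\item Fubini is justified in the first step, as noted above.
\end{itemize}
The main obstacle is only the careful justification of the adjoint identity for $J^\alpha$ and the duality step. The rest is direct.
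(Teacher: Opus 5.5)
Your proof is correct. The paper gives no argument of its own for this lemma and only cites \cite{Ya18}. Your route is the standard one: identify $(J^\alpha)'=\tau_T J^\alpha\tau_T$ by Fubini, then invert, which is the concrete form of $(B^{-1})'=(B')^{-1}$ for a bounded, injective operator $B$ with dense range.

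You also handle the two points that need care. First, $W_{\alpha,p}(0,T;H)$ is dense in $L^p(0,T;H)$, which you show via $C_c^\infty\subset J^1L^p\subset J^\alpha L^p$; this makes the dual operator single-valued. Second, Fubini is justified by Young's inequality. In the second inclusion you do not need the full identification $(L^p)'=L^q$; it is enough that the pairing separates points of $L^q(0,T;H)$.
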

By the definition of the dual operator $(\partial_t^\alpha)'$, we see
$$
\langle u, \, (\partial_t^\alpha)'v \rangle 
= \langle \partial_t^\alpha u,\, v \rangle \quad \mbox{for } u\in D(\partial_t^\alpha) 
\mbox{ and }v \in D((\partial_t^\alpha)').
$$
We define the space of test functions by
$$
\Psi:= \{ \psi\in C^{\infty}([0,T]; D(A))\colon \; 
\psi(T) = 0  \}.
$$
Then one can prove that $\Psi \subset D((\partial_t^\alpha)').$

\section{Notion of weak solution} \label{sec4}

First, to prove the uniqueness of solutions, we need the following coercivity result.
\begin{lem}\label{coer}
Let $v\in  H_{\alpha}(0,T;\mathbb{C})$. Then
$$\mathrm{Re} \int_0^T \overline{v}(t) \partial_t^\alpha v(t)\,\d t \ge \frac{T^{-\alpha}}{2\Gamma(1-\alpha)} \|v\|^2_{L^2(0,T)},$$
where $\overline{v}$ denotes the conjugate function of $v$.
\end{lem}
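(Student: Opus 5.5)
A density argument reduces the lemma to smooth functions, and for those the claim follows from an explicit pointwise identity for the Caputo derivative. We write $u:=\partial_t^\alpha v\in L^2(0,T)$ and approximate $u$ in $L^2$ by smooth $u_k$. Then $v_k:=J^\alpha u_k\to v$ in $H_\alpha(0,T)$ with the graph norm, and hence also in $L^2$. Both sides of the inequality are continuous with respect to this convergence, since $\int_0^T \overline{v}\,\partial_t^\alpha v$ is continuous in $(v,\partial_t^\alpha v)\in L^2\times L^2$. It therefore suffices to treat smooth $v$ with $v(0)=0$, for instance $v=J^\alpha u$ with $u\in C^\infty([0,T])$; then $v$ is absolutely continuous with $v'\in L^1$. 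For such $v$ we have $\partial_t^\alpha v=J^{1-\alpha}v'$, which is the usual Caputo form. To check this, apply $J^\alpha$: $J^\alpha J^{1-\alpha}v'=Jv'=v-v(0)=v$.

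The main step is the pointwise identity, for a.e. $t\in(0,T)$,
\[
\mathrm{Re}\big(\overline{v}(t)\,\partial_t^\alpha v(t)\big)=\tfrac12\,\partial_t^\alpha|v|^2(t)+\frac{\alpha}{2\Gamma(1-\alpha)}\int_0^t\frac{|v(t)-v(s)|^2}{(t-s)^{\alpha+1}}\,\d s+\frac{|v(t)|^2}{2\Gamma(1-\alpha)}\,t^{-\alpha}.
\]
This is the Alikhanov-type identity. To prove it, write
\[
\overline{v}(t)\,\partial_t^\alpha v(t)-\tfrac12\partial_t^\alpha|v|^2(t)=\frac{1}{\Gamma(1-\alpha)}\int_0^t (t-s)^{-\alpha}\Big(\overline{v}(t)v'(s)-\mathrm{Re}\big(\overline{v}(s)v'(s)\big)\Big)\d s .
\]
Taking real parts, the integrand becomes $-\tfrac12\,\partial_s|v(t)-v(s)|^2\,(t-s)^{-\alpha}$. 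Integrating by parts in $s$ then produces the two nonnegative terms above: the boundary term at $s=0$ uses $|v(t)-v(0)|^2=|v(t)|^2$, and the boundary term at $s=t$ vanishes because $v$ is smooth.

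We expect this integration by parts, including the justification of the boundary terms, to be the main technical obstacle. It is the reason for first reducing to smooth $v$ with $v(0)=0$.

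Dropping the nonnegative double-integral term and integrating over $(0,T)$ gives
\[
\mathrm{Re}\int_0^T\overline{v}\,\partial_t^\alpha v\,\d t\ \ge\ \tfrac12\int_0^T\partial_t^\alpha|v|^2\,\d t+\frac{1}{2\Gamma(1-\alpha)}\int_0^T t^{-\alpha}|v(t)|^2\,\d t .
\]
The first integral equals $J^1\partial_t^\alpha|v|^2(T)=J^\alpha|v|^2(T)$, because $J^{1-\alpha}J^\alpha=J^1$. Indeed, $\partial_t^\alpha|v|^2=J^{1-\alpha}(|v|^2)'$, so $J\partial_t^\alpha |v|^2=J^{1-\alpha}\big(|v|^2-|v(0)|^2\big)$; this is in fact $J^{1-\alpha}|v|^2(T)\ge0$. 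Finally, $t^{-\alpha}\ge T^{-\alpha}$ on $(0,T)$, which yields the stated bound with constant $\frac{T^{-\alpha}}{2\Gamma(1-\alpha)}$. The density step then extends the inequality to all $v\in H_\alpha(0,T;\mathbb{C})$.
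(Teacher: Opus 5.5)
Your proof is correct, but it takes a different route from the paper. The paper never proves the inequality directly. It writes $v=u+\mathrm{i}w$ with real $u,w\in H_\alpha(0,T;\mathbb{R})$ and notes that $\partial_t^\alpha$ maps real functions to real functions, so that $\mathrm{Re}\int_0^T\overline{v}\,\partial_t^\alpha v\,\d t=\int_0^T(u\,\partial_t^\alpha u+w\,\partial_t^\alpha w)\,\d t$. It then quotes the real-valued coercivity estimate of Kubica--Ryszewska--Yamamoto (Theorem 3.1(ii) there). You instead prove the complex inequality from scratch. You use density of $J^\alpha$ of smooth functions in $H_\alpha$, the Caputo representation $\partial_t^\alpha v=J^{1-\alpha}v'$, and a complex Alikhanov identity built on $\mathrm{Re}\big((\overline{v}(t)-\overline{v}(s))v'(s)\big)=-\tfrac12\partial_s|v(t)-v(s)|^2$. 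This makes the real/imaginary splitting unnecessary. The paper's route takes two lines but outsources all the analysis. Yours is self-contained and gives more. You get the weighted bound $\frac{1}{2\Gamma(1-\alpha)}\int_0^T t^{-\alpha}|v(t)|^2\,\d t$, plus the nonnegative terms $\tfrac12 J^{1-\alpha}|v|^2(T)$ and the Gagliardo-type double integral. The stated lemma is then just the crude consequence of $t^{-\alpha}\ge T^{-\alpha}$.

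Two small corrections, neither affecting validity.

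\textbf{The value of $J^1\partial_t^\alpha|v|^2(T)$.} You first claim it equals $J^\alpha|v|^2(T)$. Since $J^1J^{-\alpha}=J^{1-\alpha}$, the correct value is $J^{1-\alpha}|v|^2(T)$, as your next sentence says. Only its nonnegativity is used, so nothing breaks.

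\textbf{Smoothness of $v=J^\alpha u$.} With $u\in C^\infty([0,T])$, $v$ is not smooth at $t=0$ when $u(0)\neq0$, since it behaves like $u(0)t^\alpha/\Gamma(\alpha+1)$ there. Your argument only needs $v$ absolutely continuous on $[0,T]$ with $v(0)=0$, and $C^1$ on $(0,T]$; the latter is used for the vanishing boundary term at $s=t$ and the convergence of the double integral. All of this holds. Alternatively, take $u_k\in C_c^\infty(0,T)$, which makes $v_k$ genuinely smooth and zero near $0$.
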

\begin{proof}
The proof is a consequence of the analogue for real functions. Indeed, let $v=u+\mathrm{i} w$ with real-valued $u, w \in H_{\alpha}(0,T;\mathbb{R})$. Linearity gives $\partial_t^\alpha v=\partial_t^\alpha u+\mathrm{i} \partial_t^\alpha w$ and $\partial_t^\alpha u, \partial_t^\alpha w$ are real-valued. Hence
$$
\overline{v} \partial_t^\alpha v=u \partial_t^\alpha u+w \partial_t^\alpha w+\mathrm{i}\left(u \partial_t^\alpha w-w \partial_t^\alpha u\right).
$$
Taking the real part,
$$
\mathrm{Re} \int_0^T \overline{v} \partial_t^\alpha v\, \d t=\int_0^T\left(u \partial_t^\alpha u+w \partial_t^\alpha w\right) \d t.
$$
By the coercivity of real-valued functions (see \cite[Theorem 3.1~(ii)]{KRY20}), we have
$$
\int_0^T \phi(t) \partial_t^\alpha \phi(t)\, \d t \geq \frac{T^{-\alpha}}{2\Gamma(1-\alpha)}\|\phi\|_{L^2(0, T)}^2
$$
for $\phi=u$ and $\phi=w$. Then
$$
\mathrm{Re} \int_0^T \overline{v} \partial_t^\alpha v\, \d t \geq \frac{T^{-\alpha}}{2\Gamma(1-\alpha)}\left(\|u\|_{L^2(0,T)}^2+\| w \|_{L^2(0,T)}^2\right)=\frac{T^{-\alpha}}{2\Gamma(1-\alpha)}\|v\|_{L^2(0,T)}^2 .
$$
\end{proof}

Let $0<\alpha<1$ and consider the fractional Schrödinger equation
\begin{equation}\label{fseq}
\partial_t^\alpha u -\mathrm{i} Au=f, \qquad t \in (0,T).
\end{equation}
For $t\in[0,T]$, we decompose the solution
\begin{equation*}
u(t)=\sum_{n=1}^\infty u_n(t)\,\varphi_n,
\end{equation*}
and set $f_n(t):=\langle f(t),\varphi_n\rangle$.

Taking the inner product of \eqref{5fseq} with $\varphi_n$ and using the symmetry of $A$, we formally obtain
\begin{equation}\label{odef}
\partial^{\alpha}_{t} u_n(t)+\mathrm{i}\lambda_n u_n(t)=f_n(t), \qquad t\in (0,T).
\end{equation}
We prove the uniqueness of a solution to \eqref{odef} in $H_{\alpha}(0,T;\mathbb{C})$ as follows. Let $v\in H_{\alpha}(0,T;\mathbb{C})$ such that $\partial_t^\alpha v(t) + \mathrm{i} \lambda_{n}v(t) = 0$ for all $t\in(0,T)$. Multiplying by $\overline{v},$ integrating on $(0,T)$ and taking the real part, Lemma \ref{coer} yields
$$0=\mathrm{Re} \int_0^T \overline{v}(t) \partial_t^\alpha v(t)\, \d t \ge \frac{T^{-\alpha}}{2\Gamma(1-\alpha)} \|v\|^2_{L^2(0,T)}.$$
Hence, $v=0$ in $L^2(0,T)$.

Therefore, we see that the solution to \eqref{odef} is unique in $(0,T)$, and we can verify that
$$u_n(t):=\int_0^t f_n(s)\,(t-s)^{\alpha-1}E_{\alpha,\alpha}\big(-\mathrm{i}\lambda_n (t-s)^\alpha\big)\,\d s.$$
Therefore, we obtain
\begin{equation}\label{solf}
u(t)=\sum_{n=1}^\infty \left[\int_0^t \langle f(s),\varphi_n\rangle\,(t-s)^{\alpha-1}E_{\alpha,\alpha}\big(-\mathrm{i}\lambda_n (t-s)^\alpha\big)\,\d s \right] \varphi_n.
\end{equation}

\begin{defn}
For $f\in L^p(0,T;H)$, we call $u$ a weak solution to equation $\eqref{fseq}$ if $u\in L^p(0,T;H)$ and satisfies
\begin{equation}\label{weaks}
 \, _{L^p(0,T;H)}\langle u, \, ((\partial_t^\alpha)' +\mathrm{i} A)\psi \rangle_{L^{q}(0,T;H)}
= \, _{L^p(0,T;H)}\langle f,\, \psi \rangle_{L^{q}(0,T;H)}
\end{equation}
for all $\psi \in \Psi$, where $A\psi$ denotes the function $t\mapsto A\psi(t)$.
\end{defn}

For $s\in\mathbb{R}$, we define the diagonal operator $K(s): H\to H$ by
\begin{equation}\label{dfks}
K(s)\varphi_n = k_n(s)\varphi_n \qquad \text{for all } n\in \mathbb{N},
\end{equation}
where $$k_n(s)=\begin{cases}
    s^{\alpha-1} E_{\alpha,\alpha}(-\mathrm{i}\lambda_n s^\alpha),\quad &s>0,\\
    0,\quad &s\leq 0.
\end{cases} $$
Next, we collect some properties of the operator $K(s)$. 
\begin{lem}\label{lmks} For each $s>0$, the following properties hold:
\begin{itemize}
    \item[(i)] The operator $K(s)\in\mathcal{B}(H)$ and 
\begin{equation}\label{estks}
\|K(s)\|_{\mathcal{B}(H)}\le C_0 s^{\alpha-1}.
\end{equation}
\item[(ii)] $K(s)H\subset D(A)$, the operator $AK(s)\in \mathcal{B}(H)$ and
\begin{equation}
\|AK(s)\|_{\mathcal{B}(H)}\le C_0 s^{-1}.
\end{equation}
\item[(iii)] $K(s)$ and $A$ commute on $D(A)$.
\end{itemize}
\end{lem}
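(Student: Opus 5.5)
The argument is spectral throughout. $K(s)$ is diagonal in the orthonormal basis $\{\varphi_n\}$, so each operator bound reduces to a uniform bound on the multipliers $k_n(s)$ and $\lambda_n k_n(s)$. All of these come from Corollary \ref{est12} with $\beta=\alpha$, i.e.\ $|E_{\alpha,\alpha}(\mathrm{i}t)|\le C_0/(1+|t|)$ for $t\in\mathbb{R}$. We apply it at $t=-\lambda_n s^\alpha$; since $|\arg(-\mathrm{i}\lambda_n s^\alpha)|=\pi/2$ and $\pi/2>\pi\alpha/2$, this argument lies in the admissible sector.

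\textbf{(i)} For $s>0$ and any $n$, the crude bound $|E_{\alpha,\alpha}(-\mathrm{i}\lambda_n s^\alpha)|\le C_0$ gives $|k_n(s)|\le C_0 s^{\alpha-1}$. For $v=\sum_n \langle v,\varphi_n\rangle\varphi_n\in H$, we then define $K(s)v=\sum_n k_n(s)\langle v,\varphi_n\rangle\varphi_n$. By Parseval,
$$\|K(s)v\|^2=\sum_n |k_n(s)|^2|\langle v,\varphi_n\rangle|^2\le C_0^2 s^{2(\alpha-1)}\|v\|^2.$$
Hence the series converges, $K(s)$ is bounded, and \eqref{estks} holds.

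\textbf{(ii)} Here we use the decay part of the corollary:
$$\lambda_n|k_n(s)|\le \lambda_n s^{\alpha-1}\frac{C_0}{1+\lambda_n s^\alpha}\le \lambda_n s^{\alpha-1}\frac{C_0}{\lambda_n s^\alpha}=C_0 s^{-1}.$$
Recall $D(A)=\{w:\sum_n\lambda_n^2|\langle w,\varphi_n\rangle|^2<\infty\}$ with $Aw=-\sum_n\lambda_n\langle w,\varphi_n\rangle\varphi_n$. For $w=K(s)v$ we get $\sum_n\lambda_n^2|k_n(s)|^2|\langle v,\varphi_n\rangle|^2\le C_0^2s^{-2}\|v\|^2<\infty$. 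Thus $K(s)v\in D(A)$, and $\|AK(s)v\|\le C_0 s^{-1}\|v\|$ by Parseval.

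\textbf{(iii)} Take $v\in D(A)$. Then $\langle Av,\varphi_n\rangle=\langle v,A\varphi_n\rangle=-\lambda_n\langle v,\varphi_n\rangle$ by self-adjointness. Both $K(s)Av$ and $AK(s)v$ therefore have $n$-th coefficient $-\lambda_n k_n(s)\langle v,\varphi_n\rangle$, so they coincide.

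The only non-routine point is (ii). There the crude bound $C_0$ on $E_{\alpha,\alpha}$ is not enough, and one needs the $1/(1+|t|)$ decay, which is what converts the factor $\lambda_n$ into $s^{-\alpha}$. The rest is direct.
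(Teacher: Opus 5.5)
Your proposal is correct and follows essentially the same spectral argument as the paper: Parseval together with the bound $|E_{\alpha,\alpha}(\mathrm{i}t)|\le C_0/(1+|t|)$ from Corollary \ref{est12}, with the decay used in (ii) to absorb the factor $\lambda_n$. The paper writes that step as $\lambda_n^2 s^{2\alpha}/(1+\lambda_n^2 s^{2\alpha})\le 1$ where you use $1+\lambda_n s^\alpha\ge\lambda_n s^\alpha$; you are also slightly more explicit than the paper in verifying $K(s)H\subset D(A)$ through the spectral description of $D(A)$ and in checking the commutation coefficient by coefficient.
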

\begin{proof}
(i) For $s>0$ and $x=\displaystyle\sum_{n=1}^\infty \langle x,\varphi_n\rangle \varphi_n \in H$, the estimate \eqref{es0} yields
\begin{align*}
\|K(s)x\|^2 &= \sum_{n=1}^\infty |k_n(s)|^2 |\langle x,\varphi_n\rangle|^2\\
& \leq C_0^2 s^{2\alpha-2}\|x\|^2.
\end{align*}
(ii) Moreover, by \eqref{es0}, we obtain
\begin{align*}
\|AK(s)x\|^2 &= \sum_{n=1}^\infty \lambda_n^2 |k_n(s)|^2 |\langle x,\varphi_n\rangle|^2\\
&\le C_0^2 s^{-2} \sum_{n=1}^\infty\frac{\lambda_n^2 s^{2\alpha}}{1+\lambda_n^2 s^{2\alpha}}|\langle x,\varphi_n\rangle|^2\\
& \leq C_0^2 s^{-2}\|x\|^2.
\end{align*}
(iii) Since $K(s)$ and $A$ are diagonal, then they commute on $D(A)$.
\end{proof}

The next lemma provides a representation of the formal solution \eqref{solf} as a convolution with an operator-valued kernel.
\begin{lem}\label{lm:rep}
    Let $f\in L^p(0,T;H)$ for some $p\ge 1$. Then the formal solution \eqref{solf} can be represented as follows
    \begin{equation}\label{representation}
        u(t)=\int_{0}^{t}K(t-s)f(s)~\d s.
    \end{equation}
\end{lem}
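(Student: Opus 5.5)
The plan is to show that the $H$-valued Bochner integral $\int_0^t K(t-s)f(s)\,\d s$ exists for a.e.\ $t$, and then to identify it with \eqref{solf} coefficient by coefficient in the basis $\{\varphi_n\}$.

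\emph{Step 1 (existence).} By Lemma~\ref{lmks}(i), $\|K(t-s)f(s)\|\le C_0 (t-s)^{\alpha-1}\|f(s)\|$. Since $s\mapsto s^{\alpha-1}\mathbf{1}_{(0,T)}(s)$ lies in $L^1(0,T)$ and $\|f(\cdot)\|\in L^p(0,T)\subset L^1(0,T)$, Young's convolution inequality shows that $t\mapsto\int_0^t (t-s)^{\alpha-1}\|f(s)\|\,\d s$ is finite for a.e.\ $t$ and belongs to $L^p(0,T)$. We also need strong measurability of $s\mapsto K(t-s)f(s)$. First, $s\mapsto K(s)x$ is continuous on $(0,\infty)$ for each fixed $x$. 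This follows from dominated convergence in $\ell^2$: each $k_n$ is continuous on $(0,\infty)$, and on compact subintervals $|k_n(s)|\le C_0 s^{\alpha-1}$ uniformly in $n$. Approximating $f$ by simple functions then gives strong measurability of $s\mapsto K(t-s)f(s)$. Hence the Bochner integral in \eqref{representation} is well defined for a.e.\ $t$.

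\emph{Step 2 (identification).} Fix such a $t$ and take the inner product with $\varphi_m$. The map $x\mapsto\langle x,\varphi_m\rangle$ is a bounded linear functional, so it commutes with the Bochner integral. Because $K(\sigma)$ is self-adjoint up to conjugation of its diagonal entries, we get $\langle K(t-s)f(s),\varphi_m\rangle = k_m(t-s)\langle f(s),\varphi_m\rangle$. One can check this directly by expanding $f(s)=\sum_n\langle f(s),\varphi_n\rangle\varphi_n$ and using the continuity of $K(t-s)$ together with orthonormality. This yields
\[
\Big\langle \int_0^t K(t-s)f(s)\,\d s,\varphi_m\Big\rangle=\int_0^t (t-s)^{\alpha-1}E_{\alpha,\alpha}\big(-\mathrm{i}\lambda_m(t-s)^\alpha\big)f_m(s)\,\d s=u_m(t).
\]
Since $\{\varphi_n\}$ is an orthonormal basis, the element $\int_0^tK(t-s)f(s)\,\d s\in H$ is equal to $\sum_m u_m(t)\varphi_m$. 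In particular, the series \eqref{solf} converges in $H$ and coincides with \eqref{representation}.

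\emph{Main obstacle.} The delicate point is the measurability and integrability in Step 1, together with justifying the interchange of the inner product and the integral. The uniform bound \eqref{estks} handles this, with Young's inequality supplying a.e.\ finiteness; when $p=1$ the integral is likewise finite only for a.e.\ $t$. No convergence issue arises for the series itself, since its convergence is a consequence of the identification rather than an assumption.
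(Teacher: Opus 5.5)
Your proof is correct, but it matches the series with the integral differently from the paper. The paper fixes $t$ and writes the $N$-th partial sum of \eqref{solf} as $\int_0^t g_N(s)\,\d s$ with $g_N(s)=\sum_{n\le N} f_n(s)k_n(t-s)\varphi_n$. It bounds $\|g_N(s)\|\le C_0(t-s)^{\alpha-1}\|f(s)\|$ and lets $N\to\infty$ by dominated convergence for Bochner integrals. You instead first show that $\int_0^t K(t-s)f(s)\,\d s$ exists as a Bochner integral. You then identify it through its Fourier coefficients, using that the functionals $x\mapsto\langle x,\varphi_m\rangle$ commute with the integral and that an element of $H$ is determined by its coefficients. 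Both arguments rest on the same domination and on the a.e.\ finiteness of $\int_0^t (t-s)^{\alpha-1}\|f(s)\|\,\d s$.

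The two routes trade work differently. In the paper's route, strong measurability is automatic, although the paper does not spell it out: each $g_N$ is a finite sum of scalar measurable functions times fixed vectors, and $g_N(s)\to K(t-s)f(s)$ in $H$ for every $s$. Convergence of the series then comes out of the limit theorem. In your route, measurability must be proved separately; your strong-continuity argument for $s\mapsto K(s)x$ does this correctly. In exchange, convergence of \eqref{solf} in $H$ follows from the basis expansion with no limit theorem at all.

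One small wording point: $K(\sigma)$ is normal, not self-adjoint, since the $k_n(\sigma)$ are complex. The identity $\langle K(\sigma)x,\varphi_m\rangle=k_m(\sigma)\langle x,\varphi_m\rangle$ follows directly from the definition \eqref{dfks}, as your direct check shows.
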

\begin{proof}
First, using \eqref{es0}, we obtain
\begin{align}
    |k_n(\tau)|\leq C_{0}\tau^{\alpha-1} \qquad \text{for all } \tau>0. \label{est:ker}
\end{align} 
We now justify interchanging the sum and the integral. Let $N\in\mathbb{N}$ and $t\in (0,T)$. Put
$$S_N(t) := \sum_{n=1}^N \left( \int_0^t f_n(s) k_n(t-s) \, \d s \right) \varphi_n = \int_0^t g_N(s) \, \d s,$$ where $f_n(s):=\langle f(s),\varphi_n\rangle$ and $g_N(s) := \displaystyle\sum_{n=1}^N f_n(s) k_n(t-s) \varphi_n$.  
Using \eqref{est:ker}, for a.e.\ $s\in(0,t)$, we obtain
\begin{align*}
	\|g_N(s)\|&=\left(\sum_{n=1}^N |f_n(s)|^2 |k_n(t-s)|^2 \right)^{\frac{1}{2}}\\
	&\leq C_0 (t-s)^{\alpha-1}\|f(s)\|.
\end{align*}
Since $\alpha\in (0,1)$ and $f\in L^p(0,T;H)$, by the properties of convolution, we have
\begin{align*}
    \int_{0}^{t}(t-s)^{\alpha-1}\|f(s)\|~\d s <\infty\qquad \mbox{for a.e.}\; t\in (0,T).
\end{align*}
Hence, the dominated convergence theorem for Bochner integrals implies
\begin{align*}
    u(t)=& \lim_{N\to\infty} \int_0^t g_N(s)\, \d s\\
    =& \int_0^t \sum_{n=1}^\infty f_n(s) k_n(t-s)\, \varphi_n \, \d s\\
    =& \int_0^t K(t-s)f(s)\, \d s.
\end{align*}
\end{proof}

Hereafter, we set
$$u(g)(t)=\int_{0}^{t}K(t-s)g(s)\d s, \qquad g\in L^p(0,T;H),\quad 0<t<T.$$
\begin{lem}\label{strons}
Let $g \in C_c^{\infty}(0, T;D(A))$. Then $u(g) \in W_{\alpha,p}(0,T;H)\cap L^p(0,T;D(A))$.
\end{lem}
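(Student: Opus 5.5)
Since $g\in C_c^\infty(0,T;D(A))$, the plan is to show $u(g)=J^\alpha w$ for some explicit $w\in L^p(0,T;H)$, and separately that $Au(g)\in L^p(0,T;H)$. Both will be checked mode by mode and then summed using the uniform bound of Corollary~\ref{est12}. We write $g_n(t)=\langle g(t),\varphi_n\rangle$. Since $g$ is smooth with values in $D(A)$, we have $\lambda_n g_n=-\langle Ag,\varphi_n\rangle$. Hence $\sum_n \lambda_n^2\|g_n\|_{L^\infty}^2$ and the analogous sums for the time derivatives of $g_n$ are controlled by $\|Ag\|$ in $C^1$ norms. By Lemma~\ref{lm:rep}, $u(g)=\sum_n u_n\varphi_n$ with $u_n=k_n*g_n$.

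\emph{Step 1: $Au(g)\in L^p(0,T;H)$.} Commute $A$ inside the convolution using Lemma~\ref{lmks}(iii): $Au(g)(t)=\int_0^t K(t-s)Ag(s)\,\d s$. By \eqref{estks} this is bounded by $C_0\int_0^t (t-s)^{\alpha-1}\|Ag(s)\|\,\d s$. Young's inequality then gives $\|Au(g)\|_{L^p(0,T;H)}\le C_0\frac{T^\alpha}{\alpha}\|Ag\|_{L^p(0,T;H)}$. Closedness of $A$ justifies passing $A$ under the Bochner integral, since $K(t-s)Ag(s)$ is integrable. Thus $u(g)\in L^p(0,T;D(A))$.

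\emph{Step 2: $u(g)\in W_{\alpha,p}(0,T;H)$.} The candidate is $w:=g+\mathrm{i}Au(g)$, which lies in $L^p(0,T;H)$ by Step 1; this choice matches the sign in \eqref{odef}, since $\mathrm{i}Au$ has coefficients $-\mathrm{i}\lambda_n u_n$. It then suffices to show $J^\alpha w=u(g)$, i.e. for each $n$,
\begin{equation*}
J^\alpha\bigl(g_n-\mathrm{i}\lambda_n u_n\bigr)=u_n .
\end{equation*}
Taking $\varphi_n$-components commutes with $J^\alpha$, and an $H$-valued function is determined by its components. This scalar identity is the Volterra equation $u_n+\mathrm{i}\lambda_n J^\alpha u_n=J^\alpha g_n$. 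By Laplace transform it reduces to $\widehat{k_n}(z)=(z^\alpha+\mathrm{i}\lambda_n)^{-1}$, which is the standard Laplace transform of $t^{\alpha-1}E_{\alpha,\alpha}(-\mathrm{i}\lambda_n t^\alpha)$. Alternatively, one can verify it directly by expanding the series of $E_{\alpha,\alpha}$ termwise. Each term $t^{\alpha k+\alpha-1}/\Gamma(\alpha k+\alpha)$ convolved with $t^{\alpha-1}/\Gamma(\alpha)$ gives the next term, by the Beta integral. The series converges locally uniformly after multiplying by $t^{1-\alpha}$, so interchanging with the convolution is legitimate. The expansion yields $J^\alpha k_n = \frac{1}{-\mathrm{i}\lambda_n}\bigl(k_n - \frac{t^{\alpha-1}}{\Gamma(\alpha)}\bigr)$, i.e. $k_n+\mathrm{i}\lambda_n J^\alpha k_n=t^{\alpha-1}/\Gamma(\alpha)$. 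Convolving with $g_n$ gives the claim, since $t^{\alpha-1}/\Gamma(\alpha)*g_n=J^\alpha g_n$.

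The main obstacle is making the passage from components to the $H$-valued identity rigorous, and ensuring $J^\alpha w$ is computed componentwise. To handle it, note that $J^\alpha$ is bounded on $L^p(0,T;H)$ and commutes with the bounded functionals $\langle\cdot,\varphi_n\rangle$. Hence $\langle J^\alpha w(t),\varphi_n\rangle=J^\alpha\langle w,\varphi_n\rangle(t)=u_n(t)$ for a.e.\ $t$. By completeness of $\{\varphi_n\}$ this gives $J^\alpha w=u(g)$, so $u(g)\in W_{\alpha,p}(0,T;H)$ with $\partial_t^\alpha u(g)=g+\mathrm{i}Au(g)$. As a byproduct, $u(g)$ is a strong solution of \eqref{fseq}.
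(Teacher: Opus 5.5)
Your proof is correct and follows essentially the same route as the paper. Step 1 is the paper's argument: you pass $A$ through the convolution using $AK(s)=K(s)A$ and the closedness of $A$, then apply Young's inequality with $\|t^{\alpha-1}\|_{L^1(0,T)}=T^\alpha/\alpha$, invoking Hille's theorem directly where the paper uses $\varepsilon$-truncation and Riemann sums. Step 2 establishes the identity $\partial_t^\alpha u(g)=g+\mathrm{i}Au(g)$, which the paper only asserts ``using the Laplace transform''. Your formulation $u(g)=J^\alpha w$, checked mode by mode through $k_n+\mathrm{i}\lambda_n J^\alpha k_n=t^{\alpha-1}/\Gamma(\alpha)$, supplies those details and avoids presupposing $u(g)\in D(\partial_t^\alpha)$.
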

\begin{proof}
For $\varepsilon>0$ sufficiently small, we set
$$u_\varepsilon(g)(t)=\int_{0}^{t-\varepsilon}K(t-s)g(s)\d s, \quad 0<t<T.$$
Therefore, by Riemann-sum approximation and Lemma \ref{lmks}-(iii), we can prove $u_\varepsilon(g)(t)\in D(A)$ and $Au_\varepsilon(g)(t)=\displaystyle\int_{0}^{t-\varepsilon}K(t-s)Ag(s)\d s$. Therefore, we have $u_\varepsilon(g)(t) \to u(g)(t)$ and $Au_\varepsilon(g)(t) \to \displaystyle\int_{0}^{t}K(t-s)Ag(s)\d s$ as $\varepsilon \to 0$ in $H$. Since $A$ is closed, we obtain $u(g)(t)\in D(A)$ and
$$Au(g)(t)=A\int_{0}^{t}K(t-s)g(s)\d s=\int_{0}^{t}K(t-s)Ag(s)\d s.$$
Then, by Lemma \ref{lmks}-(i),
\begin{align*}
\|Au(g)(t)\| &\le \int_0^{t} \|K(t-s)\|_{\mathcal{B}(H)}\|Ag(s)\|\,\d s\\
&\le C_0 \int_0^t (t-s)^{\alpha-1}\|Ag(s)\|\,\d s.
\end{align*}
Young's convolution inequality yields
\begin{align}
\|u(g)\|_{L^p(0,T;D(A))} &\le C_0 \|t^{\alpha-1}\|_{L^1(0,T)} \|g\|_{L^p(0,T;D(A))} \nonumber\\
& \le C_0 \frac{T^\alpha}{\alpha} \|g\|_{L^p(0,T;D(A))} <\infty. \label{Weak_estimate}
\end{align}
Thus $u(g)\in L^p(0,T;D(A))$.

Using the Laplace transform, we can prove that
$$\partial_t^\alpha u(g) =\mathrm{i} Au(g)+g.$$
Therefore, $\partial_t^\alpha u(g) \in L^p(0,T;H)$, and hence, $u(g)=J^\alpha(\partial_t^\alpha u(g)) \in W_{\alpha,p}(0,T;H)$.
\end{proof}

\begin{theorem}
Assume $f\in L^p(0,T;H).$ Then there exists a unique weak solution to $\eqref{fseq}$ given by \eqref{solf}.
\end{theorem}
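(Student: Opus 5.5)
We split the argument into existence and uniqueness, and in both parts reduce to the smooth data handled by Lemma \ref{strons} through density.

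\textbf{Existence.} Take $f\in L^p(0,T;H)$ and let $u=u(f)$ be given by \eqref{solf}. By Lemma \ref{lm:rep} we can write $u(t)=\int_0^t K(t-s)f(s)\,\d s$. Lemma \ref{lmks}-(i) and Young's inequality give
\[
\|u(f)\|_{L^p(0,T;H)}\le C_0\tfrac{T^\alpha}{\alpha}\|f\|_{L^p(0,T;H)}.
\]
Hence $f\mapsto u(f)$ is bounded and linear on $L^p(0,T;H)$, and in particular $u\in L^p(0,T;H)$.

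Next choose $g_k\in C_c^\infty(0,T;D(A))$ with $g_k\to f$ in $L^p(0,T;H)$. This is possible because $D(A)$ is dense in $H$ and $C_c^\infty$ is dense in $L^p$. By Lemma \ref{strons}, $u(g_k)\in W_{\alpha,p}(0,T;H)\cap L^p(0,T;D(A))$ and $\partial_t^\alpha u(g_k)-\mathrm{i}Au(g_k)=g_k$.

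Pair this identity with $\psi\in\Psi\subset D((\partial_t^\alpha)')$. By the definition of the dual operator, $\langle \partial_t^\alpha u(g_k),\psi\rangle=\langle u(g_k),(\partial_t^\alpha)'\psi\rangle$. Self-adjointness of $A$ applied pointwise in $t$ gives $\langle -\mathrm{i}Au(g_k),\psi\rangle=\langle u(g_k),\mathrm{i}A\psi\rangle$, with the sesquilinear convention in which the conjugation of $\mathrm{i}$ produces the sign in \eqref{weaks}. Thus $u(g_k)$ satisfies \eqref{weaks} with right-hand side $g_k$.

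Now let $k\to\infty$. We have $(\partial_t^\alpha)'\psi+\mathrm{i}A\psi\in L^q(0,T;H)$, since $\psi$ is smooth with values in $D(A)$ and $\tau_T\psi\in W_{\alpha,q}$. Together with $u(g_k)\to u(f)$ in $L^p(0,T;H)$, this lets us pass to the limit and conclude that $u(f)$ is a weak solution.

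\textbf{Uniqueness.} Let $w\in L^p(0,T;H)$ satisfy \eqref{weaks} with $f=0$; we show $w=0$. Fix $n$ and test with $\psi(t)=\eta(t)\varphi_n$, where $\eta\in C^\infty([0,T])$ and $\eta(T)=0$. Since $A\varphi_n=-\lambda_n\varphi_n$, the scalar function $w_n(t):=\langle w(t),\varphi_n\rangle\in L^p(0,T)$ satisfies
\[
\int_0^T w_n\,\overline{\big((\partial_t^\alpha)'\eta-\mathrm{i}\lambda_n\eta\big)}\,\d t=0 \quad\text{for all such }\eta,
\]
so $w_n$ is a scalar weak solution of $\partial_t^\alpha w_n+\mathrm{i}\lambda_n w_n=0$. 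It therefore suffices to show that every scalar weak solution of this equation vanishes; then $w=\sum_n w_n\varphi_n=0$.

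This scalar step is the main obstacle. The test functions $\eta$ are smooth only up to $t=T$, so we cannot directly use the $H_\alpha$ uniqueness argument based on Lemma \ref{coer}, which requires $w_n\in H_\alpha$. I would use a duality argument instead. Given $h\in C_c^\infty(0,T)$, solve the backward adjoint problem $(\partial_t^\alpha)'\eta+\mathrm{i}\lambda_n\eta=h$, with the sign chosen to match the equation for $w_n$. Its solution is explicit: $\eta=\tau_T\big(k_n^{*}*\tau_T h\big)$, where $k_n^{*}$ is built from $s^{\alpha-1}E_{\alpha,\alpha}(\pm\mathrm{i}\lambda_n s^\alpha)$.

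Because $h$ vanishes near $0$ and near $T$, this $\eta$ vanishes near $T$ and is smooth on $[0,T)$. Near $t=0$ it is only of the form smooth plus $(T-t)^{\cdot}$-type terms, so it is not literally in $\Psi$. I would therefore approximate $\eta$ by elements of $\Psi$ in the graph norm of $(\partial_t^\alpha)'$, that is, in $\tau_T W_{\alpha,q}$. Since the pairing in \eqref{weaks} is continuous in that norm, passing to the limit gives $\int_0^T w_n\bar h\,\d t=0$ for all $h\in C_c^\infty(0,T)$. Hence $w_n=0$.

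If this approximation turns out to be delicate, an alternative is to mollify $w_n$ in time, check that the mollified function lies in $H_\alpha$ and solves the equation up to a vanishing error, and then apply Lemma \ref{coer} directly.
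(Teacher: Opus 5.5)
Your argument is correct and follows the paper's proof. Existence goes by approximating $f$ with data in $C_c^\infty(0,T;D(A))$, applying Lemma~\ref{strons}, and passing to the limit in \eqref{weaks} via the Young-inequality bound. Uniqueness goes by testing with $\eta\varphi_n$ and solving the backward adjoint equation, so that the range contains $C_c^\infty(0,T)$ and is therefore dense in $L^q(0,T)$.

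The only deviation is your approximation step, which rests on a misdiagnosis and is unnecessary. Since $h$ has compact support in $(0,T)$, you can write $\eta(t)=\int_0^T \sigma^{\alpha-1}E_{\alpha,\alpha}(\pm\mathrm{i}\lambda_n\sigma^\alpha)\,\tilde h(t+\sigma)\,\mathrm{d}\sigma$, with $\tilde h$ the zero extension of $h$. This shows that $\eta\in C^\infty([0,T])$ including near $t=0$, and that $\eta$ vanishes near $T$. Hence $\eta\varphi_n\in\Psi$ directly, exactly as the paper uses.
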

\begin{proof}
First, we prove the uniqueness. Let $u\in L^p(0,T;H)$ such that
\begin{equation} \label{deq}
 \, _{L^p(0,T;H)}\langle u, \, ((\partial_t^\alpha)' +\mathrm{i} A)\psi \rangle_{L^{q}(0,T;H)} 
=0
\end{equation}
for all $\psi \in \Psi$. We need to prove that $u=0$. We write $u(t)$ as
$$u(t)=\sum_{n=1}^\infty u_n(t)\,\varphi_n,\qquad u_n\in L^p(0,T;\mathbb{C}),$$
where $u_n(t)=\langle u(t),\varphi_n\rangle$. For a fixed $n\ge 1$, we take arbitrary function $\eta\in C^\infty([0,T];\mathbb{C})$ such that $\eta(T)=0$, and we set $\psi(t)=\eta(t)\varphi_n\in\Psi$. Then, by \eqref{deq}, we obtain
$$ 0=\int_0^T u_n(t)\big((\partial_t^\alpha)'\overline{\eta}(t)+\mathrm{i}\lambda_n\overline{\eta}(t)\big)\,\d t.$$
It remains to prove that
$$\Phi_n:=\left\{(\partial_t^\alpha)'\overline{\eta} +\mathrm{i}\lambda_n\overline{\eta}:\ \overline{\eta}\in C^\infty([0,T];\mathbb{C}),\ \overline{\eta}(T)=0\right\}$$
is dense in $L^q(0,T;\mathbb{C})$. Next, we prove that $C_c^\infty(0,T;\mathbb{C}) \subset \Phi_n$. For $g\in C_c^\infty(0,T;\mathbb{C})$, we solve the fractional ordinary equation
\begin{equation}\label{ode}
(\partial_t^\alpha)'\overline{\eta} +\mathrm{i}\lambda_n\overline{\eta} = g.
\end{equation}
By \cite[Lemma 3.1]{Ya18}, we have $(\partial_t^\alpha)'\overline{\eta}$ is the right-sided Riemann–Liouville derivative for $\tau_T \overline{\eta} \in D(\partial_t^\alpha)$. Then, the solution to \eqref{ode} is given by
$$\overline{\eta}(t):=\int_t^T (s-t)^{\alpha-1}E_{\alpha,\alpha}\big(-\mathrm{i}\lambda_n (s-t)^\alpha\big)\,g(s)\,\d s.$$
Therefore, $\Phi_n$ is dense in $L^q(0,T;\mathbb{C})$ and $u_n=0$ for all $n\ge 1$. Thus $u=0$.

Next, we prove that the formal solution \eqref{solf} is a weak solution to $\eqref{fseq}$.

By Lemma \ref{lm:rep}, the formal solution \eqref{solf} can be written as
$$u(t)=\int_{0}^{t}K(t-s)f(s)~\d s.$$
Then
\begin{align*}
\|u(t)\| &\le  \int_0^t \|K(t-s)\|_{\mathcal{B}(H)}\|f(s)\|\,\d s\\
&\le C_0 \int_0^t (t-s)^{\alpha-1}\|f(s)\|\,\d s.
\end{align*}
By Young's convolution inequality, we infer
\begin{align*}
\|u\|_{L^p(0,T;H)} &\le C_0 \|t^{\alpha-1}\|_{L^1(0,T)} \|f\|_{L^p(0,T;H)}\\
& \le C_0 \frac{T^\alpha}{\alpha} \|f\|_{L^p(0,T;H)} <\infty.
\end{align*}
Then $u\in L^p(0,T;H)$.

Now, we choose a sequence $f_n \in C_c^{\infty}(0, T;D(A))$, $n \in \mathbb{N}$ such that $f_n \to f$ in $L^p(0, T;H)$. By Lemma \ref{strons}, we have $u(f_n) \in D(\partial_t^\alpha) \cap L^p(0,T;D(A))$. Then we obtain
\begin{equation}\label{dualfn}
\left\langle u(f_n),\left(\left(\partial_t^\alpha\right)^{\prime}+\mathrm{i}A\right) \psi\right\rangle=\left\langle f_n, \psi\right\rangle \quad \text{ for all } \psi \in \Psi \text{ and } n \in \mathbb{N}.
\end{equation}
On the other hand, one has
$$
u(f_n) \to u(f) \quad\text{ in } L^p(0,T;H) \quad \text{ as } n \rightarrow \infty .
$$
Letting $n \rightarrow \infty$ in \eqref{dualfn}, we obtain
$$
\left\langle u(f),\left(\left(\partial_t^\alpha\right)^{\prime}+\mathrm{i}A\right) \psi\right\rangle=\left\langle f, \psi\right\rangle \quad \text{ for all } \psi \in \Psi.
$$
Therefore, $u$ is the unique weak solution to $\eqref{fseq}$.
\end{proof}

Regarding the continuity of the weak solution on $[0,T]$ with values in $H$, we establish the following result.
\begin{prop}\label{propc}
Assume $f\in L^p(0,T;H)$ for $p>\frac{1}{\alpha}$. Then the weak solution to \eqref{fseq} satisfies $u\in C([0,T];H)$, and there exists a constant $C=C(\alpha,p) >0$ such that
\begin{align}
    \|u\|_{C([0,T];H)}\le CT^{\alpha-\frac{1}{p}}\,\|f\|_{L^p(0,T;H)}. \label{energy:conti}
\end{align}
\end{prop}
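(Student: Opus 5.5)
We propose to work directly with the convolution representation of Lemma \ref{lm:rep}, $u(t)=\int_0^t K(t-s)f(s)\,\d s$, together with the operator bound $\|K(s)\|_{\mathcal{B}(H)}\le C_0 s^{\alpha-1}$ from Lemma \ref{lmks}-(i). The argument mimics the proof of Proposition \ref{fractional_space}. The only change is that the scalar kernel $(t-s)^{\alpha-1}/\Gamma(\alpha)$ is replaced by the operator kernel $K(t-s)$, so we also need a continuity estimate for $K$ in $s$.

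\textbf{Step 1: the sup bound.} For $t\in(0,T]$, Hölder's inequality with $q(\alpha-1)>-1$ (equivalent to $p>1/\alpha$) gives
\[
\|u(t)\|\le C_0\Big(\int_0^t (t-s)^{q(\alpha-1)}\,\d s\Big)^{1/q}\|f\|_{L^p(0,T;H)}\le C\,t^{\alpha-\frac1p}\|f\|_{L^p(0,T;H)}.
\]
This yields \eqref{energy:conti} and shows $u(t)\to 0$ as $t\downarrow0$. It also shows that the map $f\mapsto u$ is bounded from $L^p(0,T;H)$ into $L^\infty(0,T;H)$ with the claimed constant.

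\textbf{Step 2: continuity by density.} Rather than estimating $K(t)-K(t_0)$ directly, we use density. Take $f_k\in C_c^\infty(0,T;D(A))$ with $f_k\to f$ in $L^p(0,T;H)$. By Step 1, $\sup_t\|u(f_k)(t)-u(f)(t)\|\le CT^{\alpha-1/p}\|f_k-f\|_{L^p}\to0$. Since a uniform limit of continuous functions is continuous, it suffices to show $u(f_k)\in C([0,T];H)$.

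For smooth $g$ vanishing near $0$, substitute $\tau=t-s$ to write $u(g)(t)=\int_0^t K(\tau)g(t-\tau)\,\d\tau$. Then
\[
u(g)(t)-u(g)(t_0)=\int_{t_0}^t K(\tau)g(t-\tau)\,\d\tau+\int_0^{t_0}K(\tau)\big(g(t-\tau)-g(t_0-\tau)\big)\,\d\tau .
\]
The first term is bounded by $C_0\|g\|_\infty\int_{t_0}^t\tau^{\alpha-1}\,\d\tau\to0$. The second is bounded by $C_0\|g'\|_\infty|t-t_0|\,t_0^\alpha/\alpha$. Hence $u(g)$ is continuous.

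A second route would avoid density: treat each mode separately and use the uniform bound on $|k_n(s)|$ together with the continuity of $s\mapsto s^{\alpha-1}E_{\alpha,\alpha}(-\mathrm{i}\lambda_n s^\alpha)$. This requires a dominated-convergence argument across $n$, so the density argument is cleaner.

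\textbf{Main obstacle.} The only delicate point is obtaining continuity without any equicontinuity estimate on $K(s)$ in operator norm, which we do not have uniformly in $n$. The density argument sidesteps this. We should also check that the weak solution coincides with $u(f)$, which holds by the uniqueness theorem.
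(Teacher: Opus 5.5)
Your proof is correct, but the continuity step takes a different route from the paper's. The paper keeps $f$ fixed and puts the shift on the kernel. It writes $u(t+h)-u(t)=\int_0^t (K(s+h)-K(s))f(t-s)\,\d s+\int_0^h K(s)f(t+h-s)\,\d s$ and uses continuity of translations in $L^q(0,T;\mathcal{B}(H))$, obtained by approximating $K$ by functions in $C_c((0,T);\mathcal{B}(H))$. You instead approximate the data by smooth $f_k$ and let the shift fall on $f_k$. Your Step 1 bound then serves as a uniform-in-$t$ stability estimate, so continuity survives the uniform limit. These are the two standard proofs that an $L^q$ kernel convolved with an $L^p$ function is continuous.

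Your version needs only the pointwise bound $\|K(\tau)\|_{\mathcal{B}(H)}\le C_0\tau^{\alpha-1}$ of Lemma \ref{lmks} and strong measurability of the integrand. The paper's version tacitly requires $K$ to be Bochner measurable as a $\mathcal{B}(H)$-valued function, which it does not verify.

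Your closing remark that no operator-norm regularity of $K$ is available is not accurate, though. We have $\|K(s)-K(s')\|_{\mathcal{B}(H)}=\sup_n|k_n(s)-k_n(s')|$, and $x\mapsto E_{\alpha,\alpha}(-\mathrm{i}x)$ is continuous on $[0,\infty)$ and tends to $0$ at infinity. Hence $\sup_{x\ge0}|E_{\alpha,\alpha}(-\mathrm{i}x)-E_{\alpha,\alpha}(-\mathrm{i}cx)|\to0$ as $c\to1$, so $K$ is norm-continuous on $(0,\infty)$. This is what justifies the paper's argument, but you do not need it.

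In the other direction, the paper's argument covers $p=\infty$ (where $q=1$) directly, while your density step does not, since $C_c^\infty$ is not dense in $L^\infty$. This is harmless: $L^\infty(0,T;H)\subset L^{p'}(0,T;H)$ for some finite $p'>1/\alpha$ gives continuity, and Step 1 gives the bound.
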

\begin{proof}
Recall from Lemma \ref{lmks}-(i) that we have
\[\|K(s)\|_{\mathcal{B}(H)}\leq C_{0}s^{\alpha-1} \qquad \text{for all } s>0.\]
Since $\alpha p> 1$, then $(\alpha-1)q>-1$ and 
\begin{align*}
    \|K\|_{L^{q}(0,T;\mathcal{B}(H))}\leq& C_{0}\left(\frac{T^{(\alpha-1)q+1}}{(\alpha-1)q+1}\right)^{\frac{1}{q}}\\
    \leq & CT^{\alpha-\frac{1}{p}}.
\end{align*}
where $C=C(\alpha,p)>0$.
For \(t\in[0,T]\), using \eqref{representation} and Hölder's inequality, we get
\begin{align}
	\|u(t)\| &\le  \int_0^t \|K(s)\|_{\mathcal{B}(H)}\|f(s)\|\,\d s \nonumber\\
	&\le \|K\|_{L^{q}(0,T;\mathcal{B}(H))} \|f\|_{L^{p}(0,T;H)} \nonumber\\
	&\le CT^{\alpha-\frac{1}{p}}\|f\|_{L^{p}(0,T;H)}. \label{est:conti}
\end{align}
Thus \(u(t)\) is well-defined for all \(t\in[0,T]\).
%and the estimate holds.
	To prove the right continuity at \(t\in [0,T)\), let \(h\geq 0\) be small so \(t+h\in[0,T]\) (left continuity can be proved similarly). By a change of variables, we can write
	\begin{align*}
		u(t+h)-u(t)
		%=& \int_0^{t} \big(K(s+h)-K(s)\big) f(t-s)\,\d s
		%+ \int_{0}^{h} K(s)\,f(t+h-s)\,\d s\\
		=& \int_0^{t} \big(\tau_hK(s)-K(s)\big) f(t-s)\,\d s
		+ \int_{0}^{h} K(s)\,\tau_hf(t-s)\,\d s.
	\end{align*}
	Taking the \(H\)-norm and using Hölder's inequality yields
	\[
	\|u(t+h)-u(t)\|
	\le \|f\|_{L^p(0,T;H)} \Big( \|\tau_h K-K\|_{L^q(0,T;\mathcal{B}(H))}
	+ \|K\|_{L^q(0,h;\mathcal{B}(H))}\Big).
	\]
	The second term tends to \(0\) as \(h\to0\) by the integrability of $K\in L^q(0,T;\mathcal{B}(H))$.
    
	Next, we prove that the first term tends to $0$ as $h\to 0$. Let $\varepsilon>0$. Since continuous compactly supported functions are dense in $L^q(0, T;\mathcal{B}(H))$, choose $\Psi \in C_c((0, T) ; \mathcal{B}(H))$ with
	$$
	\|K-\Psi\|_{L^q(0, T;\mathcal{B}(H))}<\frac{\varepsilon}{3}.
	$$
	%Because $\Psi$ has compact support in $(0, T)$, there exists $\delta>0$ such that $\operatorname{supp} \Psi \subset[\delta, T-\delta]$. For $|h|<\delta$ both $\Psi(s)$ and $\Psi(s+h)$ are defined and, 
    By uniform continuity of $\Psi$ on its compact support, we can choose $|h|$ small enough so that
	$$
	\|\tau_h\Psi-\Psi\|_{L^q(0, T;\mathcal{B}(H))}<\frac{\varepsilon}{3}.
	$$
	Now, for such $h$, the triangle inequality gives
	\begin{align*}
		\|\tau_h K-K\|_{L^q(0, T;\mathcal{B}(H))}\le& \|\tau_h K-\tau_h \Psi\|_{L^q(0, T;\mathcal{B}(H))} + \|\tau_h \Psi -\Psi\|_{L^q(0, T;\mathcal{B}(H))}\\
		&+ \|\Psi -K\|_{L^q(0, T;\mathcal{B}(H))}.
	\end{align*}
	Each of the terms is bounded by $\frac{\varepsilon}{3}$. This proves the desired result. 
    Finally, $u\in C([0,T];H)$, and the estimate \eqref{energy:conti} follows from \eqref{est:conti}.
\end{proof}

\section{Maximal regularity}\label{sec5}
In this section, we investigate the maximal regularity for the following fractional Schr\"odinger equation
\begin{equation}\label{5fseq}
\partial_t^\alpha u -\mathrm{i} Au=f, \qquad t \in (0,T).
\end{equation}
\begin{defn}
Let $1<p<\infty$. We say that the equation $\eqref{5fseq}$ satisfies the maximal $L^p$-regularity, if for all $f\in L^p(0,T;H)$ the unique weak solution $u$ belongs to $W_{\alpha,p}(0,T;H)$. That is, the solution to the equation $\eqref{5fseq}$ belongs to $W_{\alpha,p}(0,T;H)\cap L^p(0,T;D(A))$, where $D(A)$ is endowed with the graph norm.
\end{defn}
In this case, by the Closed Graph Theorem, there is $C>0$ such that
\begin{equation}
\|u\|_{W_{\alpha,p}(0,T;H)}+\|Au\|_{L^p(0,T;H)} \le C \|f\|_{L^p(0,T;H)}.
\end{equation}

\subsection{Maximal $L^2$-regularity}\label{sec51}
We start with the $L^2$-case, where we prove the maximal $L^2$-regularity in an elementary manner.
\begin{theorem}\label{thmMR}
Assume $f\in L^2(0,T;H).$ Then \eqref{5fseq} satisfies the maximal $L^2$-regularity. In particular, $u\in L^2(0,T;D(A))$ and there exists a constant $C>0$ such that
\[
\|u\|_{H_{\alpha}(0,T;H)}+\|u\|_{L^2(0,T;D(A))}\le C\,\|f\|_{L^2(0,T;H)}.
\]
\end{theorem}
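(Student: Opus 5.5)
Our plan is to work mode by mode in the eigenbasis $\{\varphi_n\}$ and reduce the whole statement to a uniform $L^2(0,T)$ estimate for the scalar problems \eqref{odef}. Write $f_n=\langle f,\varphi_n\rangle$ and $u_n=\langle u,\varphi_n\rangle = k_n * f_n$, where $k_n$ is the kernel from \eqref{dfks}. We will show that there is a constant $C$, independent of $n$, such that
\begin{equation*}
\lambda_n\|u_n\|_{L^2(0,T)}\le C\|f_n\|_{L^2(0,T)} .
\end{equation*}
Summing over $n$ then gives $\|Au\|_{L^2(0,T;H)}\le C\|f\|_{L^2(0,T;H)}$. Since each $u_n\in H_\alpha(0,T;\mathbb{C})$ solves $\partial_t^\alpha u_n=f_n-\mathrm{i}\lambda_n u_n$, we also get $\sum_n\|\partial_t^\alpha u_n\|^2_{L^2}\le C\|f\|^2_{L^2(0,T;H)}$.

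Next we pass from the modes back to $u$. The partial sums $u^N=\sum_{n\le N}u_n\varphi_n$ are then Cauchy in $H_\alpha(0,T;H)$, because $J^{-\alpha}$ acts diagonally. Since $H_\alpha$ is a Banach space and $u^N\to u$ in $L^2(0,T;H)$, we conclude $u\in H_\alpha(0,T;H)$ with the stated bound. The Poincaré-type bound $\|u\|\le\lambda_1^{-1}\|Au\|$ handles the lower-order part.

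For the scalar estimate, we extend $f_n$ by zero to $\mathbb{R}$, so that $u_n=k_n*f_n$ on $(0,T)$, and we use Plancherel. The estimate $\lambda_n\|k_n*f_n\|_{L^2(\mathbb{R})}\le C\|f_n\|_{L^2}$ holds as soon as $\lambda_n\widehat{k_n}$ is uniformly bounded. We would obtain this transform from the Laplace transform $\int_0^\infty e^{-zt}t^{\alpha-1}E_{\alpha,\alpha}(-\mathrm{i}\lambda t^\alpha)\,\d t=(z^\alpha+\mathrm{i}\lambda)^{-1}$, valid for $\mathrm{Re}\, z>0$. Convergence of this integral is guaranteed by Corollary \ref{est12}, since $|k_n(t)|\le C_0 t^{\alpha-1}(1+\lambda_n t^\alpha)^{-1}$, which is integrable on $(0,\infty)$ after damping.

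To avoid boundary issues on the imaginary axis, we use the weighted Plancherel on the line $\mathrm{Re}\, z=\gamma$: multiply by $e^{-\gamma t}$ with $\gamma=1/T$, which costs only the factor $e^{\gamma T}=e$ on $(0,T)$. The multiplier to bound is then
\begin{equation*}
m_n(\xi)=\frac{\lambda_n}{(\gamma+\mathrm{i}\xi)^\alpha+\mathrm{i}\lambda_n},\qquad \xi\in\mathbb{R}.
\end{equation*}
The main obstacle is bounding $|(\gamma+\mathrm{i}\xi)^\alpha+\mathrm{i}\lambda_n|\ge c\lambda_n$ uniformly; this is exactly where the imaginary argument differs from subdiffusion and where complete monotonicity is unavailable. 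Write $w=(\gamma+\mathrm{i}\xi)^\alpha$. Then $\arg w\in(-\alpha\pi/2,\alpha\pi/2)$, so $w$ lies in a sector strictly inside the right half-plane. The point $-\mathrm{i}\lambda_n$ lies on the negative imaginary axis, at angle $\pi/2$ from that sector's closure. Elementary geometry then gives $|w+\mathrm{i}\lambda_n|\ge\cos(\alpha\pi/2)\,\max(|w|,\lambda_n)$. Concretely, the distance from $-\mathrm{i}\lambda_n$ to the ray of angle $\theta$, with $|\theta|\le\alpha\pi/2$, is at least $\lambda_n\cos(\alpha\pi/2)$. Hence $|m_n(\xi)|\le 1/\cos(\alpha\pi/2)$ uniformly in $n$ and $\xi$.

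It remains to justify rigorously that $e^{-\gamma t}k_n\in L^1(0,\infty)$ and that its Fourier transform equals the Laplace transform above. We would check the transform by termwise integration of the series for small $\lambda$ and then extend by analytic continuation in $\lambda$. If this proves delicate, a fallback is to verify directly that $v=e^{-\gamma t}u_n$ satisfies a shifted equation and to apply Fourier analysis to that equation.
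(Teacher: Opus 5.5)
Your proof is correct, but the key scalar estimate follows a different route from the paper's.

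The paper stays in the time domain. It writes $\lambda_n u_n=(\lambda_n k_n)*f_n$ and rescales $s=\lambda_n^{1/\alpha}t$ to get the $n$-independent bound $\|\lambda_n k_n\|_{L^1(0,T)}\le I(\alpha)=\int_0^\infty s^{\alpha-1}|E_{\alpha,\alpha}(-\mathrm{i}s^\alpha)|\,\d s$, and then applies Young's inequality. Finiteness of $I(\alpha)$ requires the large-argument expansion of $E_{\alpha,\alpha}$ on the imaginary axis. In particular it uses that the $z^{-1}$ coefficient $1/\Gamma(0)$ vanishes, which gives the decay $\mathcal{O}(s^{-2\alpha})$.

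You work in frequency instead. Because of the damping $e^{-\gamma t}$, you only need $e^{-\gamma t}k_n\in L^1(0,\infty)$, and the crude bound of Corollary \ref{est12} suffices for that. Uniformity in $n$ then comes from the elementary sector geometry of $(\gamma+\mathrm{i}\xi)^\alpha$, with the explicit constant $1/\cos(\alpha\pi/2)$. Letting $\gamma\to0^+$ even removes the factor $e$. The blow-up of this constant as $\alpha\to1$ makes the failure at $\alpha=1$ transparent.

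Your multiplier is the shifted version of the symbol $m$ in the proof of Theorem \ref{thmMRL^p}. So your argument is essentially the $p=2$ case of that proof, with Plancherel replacing Mikhlin, and only boundedness of the symbol (not of $s\,m'(s)$) is needed. In exchange, the paper's route gives a genuinely stronger time-domain fact: a uniform $L^1(0,\infty)$ bound on the kernels $\lambda_n k_n$. That yields modewise estimates in every $L^p$, though only in $L^2$ can the modes be summed.

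For the Laplace identity, continuation in $z$ is quicker than continuation in $\lambda$. Both sides are holomorphic on $\mathrm{Re}\,z>0$: the left by absolute convergence, the right because $|\arg z^\alpha|<\alpha\pi/2$ keeps $z^\alpha\neq-\mathrm{i}\lambda_n$. They agree for $\mathrm{Re}\,z>\lambda_n^{1/\alpha}$ by termwise integration. The same identity also verifies $J^\alpha(f_n-\mathrm{i}\lambda_n u_n)=u_n$, which you take as given.

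Your passage back to $u$, via Cauchy partial sums in $H_\alpha(0,T;H)$, is more careful than the paper's, which simply sums the modal norms.
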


\begin{proof}
For $t\in[0,T]$, write
\begin{equation}\label{eq:y-def}
u(t)=\sum_{n=1}^\infty u_n(t)\,\varphi_n,\qquad
u_n(t):=\int_0^t f_n(s)\,(t-s)^{\alpha-1}E_{\alpha,\alpha}\big(-\mathrm{i}\lambda_n (t-s)^\alpha\big)\,\d s,
\end{equation}
where $f_n(s):=\langle f(s),\varphi_n\rangle$.

We can check that
\begin{equation}\label{four}
\partial^{\alpha}_{t} u_n(t)+\mathrm{i}\lambda_n u_n(t)=f_n(t).
\end{equation}
Using Young's convolution inequality, we obtain
\begin{align}
\|\partial^{\alpha}_{t} u_n(t)\|^2_{L^2(0,T)} & \leq C \int_0^T\left|f_n(t)\right|^2 d t + C\left(\int_0^T\left|f_n(t)\right|^2 \d t\right)\left(\int_0^T\left|\lambda_n t^{\alpha-1} E_{\alpha, \alpha}\left(-\mathrm{i}\lambda_n t^\alpha\right)\right| \d t\right)^2.
\end{align}
Fix $n\in \mathbb N$. By the change of variable $s=\lambda_n^{\frac{1}{\alpha}} t$, we have
\begin{align*}
\int_0^T \lambda_n t^{\alpha-1} \left|E_{\alpha, \alpha}\left(-\mathrm{i}\lambda_n t^\alpha\right)\right| \d t &= \int_0^{\lambda_n^{\frac{1}{\alpha}}T} s^{\alpha-1} \left|E_{\alpha, \alpha}\left(-\mathrm{i}s^\alpha\right)\right| \d s\\
&\leq \int_0^{\infty} s^{\alpha-1}\left|E_{\alpha, \alpha}\left(-\mathrm{i}s^\alpha\right)\right| \d s =:I(\alpha).
\end{align*}
Note that the last integral is convergent, i.e., $I(\alpha)<\infty$. Indeed, near $0$, by \eqref{es0}, we have $\left|E_{\alpha, \alpha}\left(-\mathrm{i}s^\alpha\right)\right| \leq C$ for all $s>0$, and
$\displaystyle\int_0^{1} s^{\alpha-1} \d s=\frac{1}{\alpha}$. On the other hand, near $\infty$, the asymptotic expansion \eqref{asymptotic expansions} at order $N=2$ yields $E_{\alpha, \alpha}\left(-\mathrm{i}s^\alpha\right)=\mathcal{O}(s^{-2\alpha})$ as $s\to \infty$. Then $s^{\alpha-1} E_{\alpha, \alpha}\left(-\mathrm{i}s^\alpha\right)=\mathcal{O}(s^{-\alpha-1})$ and $\displaystyle\int_{1}^{\infty} s^{-\alpha-1} \d s=\frac{1}{\alpha}$.

Therefore, 
\begin{equation}\label{dta}
\|\partial^{\alpha}_{t} u_n(t)\|^2_{L^2(0,T)} \leq C \int_0^T\left|f_n(t)\right|^2 \d t.
\end{equation}
Hence,
\begin{align*}
\left\|\partial^{\alpha}_{t} u\right\|_{L^2(0,T;H)}^2 =\sum_{n=0}^{\infty} \|\partial^{\alpha}_{t} u_n(t)\|^2_{L^2(0,T)} \le C \sum_{n=0}^{\infty} \int_0^T |f_n(t)|^2 \d t = C \|f\|_{L^2(0,T;H)}^2.
\end{align*}
Moreover, by \eqref{four} and \eqref{dta}, we infer
$$\|\lambda_n u_n(t)\|^2_{L^2(0,T)} \leq C \int_0^T\left|f_n(t)\right|^2 \d t.$$
Then, similarly, we obtain
$$\left\|A u\right\|_{L^2(0,T;H)} \le C \|f\|_{L^2(0, T;H)}.$$
\end{proof}
\begin{rmk}
The proof of Theorem \ref{thmMR} breaks down when $\alpha=1$ because the constant $I(\alpha)$ becomes infinite.
\end{rmk}
\begin{rmk}
The analog of Theorem \ref{thmMR} has been proven in \cite[Theorem 2.2~(i)]{SY11} for fractional diffusion equations. The proof relies on the complete monotonicity of the function $E_{\alpha,1}(-s^\alpha)$ for $s > 0$ when $0<\alpha<1$; see, e.g., \cite{Po48} and also \cite{Sc96}. In particular, this implies that $E_{\alpha,\alpha}(-s^\alpha)\ge 0$ for $s \ge 0$. In our case, the complete monotonicity of the function $|E_{\alpha,1}(-\mathrm{i}s^\alpha)|$, for $s > 0$, seems to be difficult; we refer to Conjecture 3 in \cite{CEMY25} for more details. Thus, we needed to modify the proof without using the complete monotonicity.
\end{rmk}

\subsection{Maximal $L^p$-regularity}\label{sec52}
Concerning maximal $L^p$-regularity, we first prove the following partial result.
\begin{theorem}\label{bthmMRL^p}
Assume $f\in L^p(0,T;D(A)).$ Then \eqref{5fseq} satisfies the maximal $L^p$-regularity. In particular, $u\in L^p(0,T;D(A))$ and there exists a constant $C>0$ such that
\[
 \|u\|_{L^p(0,T;D(A))}\le C\,\|f\|_{L^p(0,T;D(A))}.
\]
\end{theorem}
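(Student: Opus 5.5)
The plan is to exploit the extra spatial regularity of $f$ together with the commutation of $A$ with the kernel $K(s)$. By Lemma \ref{lm:rep}, the weak solution is $u(t)=\int_0^t K(t-s)f(s)\,\d s$. For $f\in L^p(0,T;D(A))$, I would first prove that $u(t)\in D(A)$ and
\[
Au(t)=\int_0^t K(t-s)Af(s)\,\d s
\]
for a.e. $t$.

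\emph{Step 1: moving $A$ inside the integral.} I would repeat the argument of Lemma \ref{strons}: truncate the integral at $t-\varepsilon$ and use Riemann sums together with Lemma \ref{lmks}-(iii). Then pass to the limit using the closedness of $A$; this is justified because both integrals converge absolutely by Lemma \ref{lmks}-(i). A cleaner alternative is to work componentwise: $\langle u(t),\varphi_n\rangle=\int_0^t k_n(t-s)f_n(s)\,\d s$, and $\lambda_n f_n=-\langle Af,\varphi_n\rangle$. Hence $\lambda_n u_n(t)=-\int_0^t k_n(t-s)\langle Af(s),\varphi_n\rangle\,\d s$. The same dominated convergence argument as in Lemma \ref{lm:rep} then shows $\sum_n \lambda_n u_n(t)\varphi_n$ converges in $H$ and equals $-\int_0^t K(t-s)Af(s)\,\d s$. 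So $u(t)\in D(A)$ with the formula above.

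\emph{Step 2: the estimate.} By Lemma \ref{lmks}-(i), $\|Au(t)\|\le C_0\int_0^t (t-s)^{\alpha-1}\|Af(s)\|\,\d s$. Young's convolution inequality then gives
\[
\|Au\|_{L^p(0,T;H)}\le C_0\frac{T^\alpha}{\alpha}\|Af\|_{L^p(0,T;H)},
\]
which is exactly \eqref{Weak_estimate} with $g$ replaced by $f$. Since $\|\cdot\|_{D(A)}=\|A\cdot\|$, this is the claimed bound.

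\emph{Step 3: fractional regularity.} It remains to show $u\in W_{\alpha,p}(0,T;H)$. Take $f_k\in C_c^\infty(0,T;D(A))$ with $f_k\to f$ in $L^p(0,T;D(A))$, which is possible by density. By Lemma \ref{strons}, $\partial_t^\alpha u(f_k)=\mathrm{i}Au(f_k)+f_k$. By Step 2, $Au(f_k)\to Au(f)$ in $L^p(0,T;H)$. Hence $\partial_t^\alpha u(f_k)$ is Cauchy in $L^p(0,T;H)$, while $u(f_k)\to u(f)$ in $L^p(0,T;H)$. Since $\partial_t^\alpha$ is closed, $u\in W_{\alpha,p}(0,T;H)$ and $\partial_t^\alpha u=\mathrm{i}Au+f$.

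The main obstacle is really Step 1, the rigorous interchange of $A$ and the Bochner integral. However, it is a routine repetition of the arguments already given, since the kernel $s^{\alpha-1}$ is integrable. The essential point is that with $f\in D(A)$ one never needs the nonintegrable bound $\|AK(s)\|\le C_0 s^{-1}$. The genuine difficulty, the case $f\in L^p(0,T;H)$, is deferred to the multiplier argument.
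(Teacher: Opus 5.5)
Your proof is correct, but it takes a different route from the paper's.

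\textbf{The paper's route.} It never identifies $Au(t)$ for nonsmooth $f$. It approximates $f$ by $f_n\in C_c^\infty(0,T;D(A))$ and uses \eqref{Weak_estimate} to get $Au_n\to v$ in $L^p(0,T;H)$. It then shows $u_n\rightharpoonup u$ weakly and concludes $(u,v)\in L^p(0,T;\mathcal{G}(A))$, because this closed subspace of $L^p(0,T;H\times H)$ is weakly closed. The bound comes from weak lower semicontinuity of the norm.

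\textbf{Your route.} You extend the commutation formula of Lemma \ref{strons} directly to $f\in L^p(0,T;D(A))$. You apply Lemma \ref{lm:rep} to $g=Af$ componentwise, using $\langle Af(s),\varphi_n\rangle=-\lambda_n f_n(s)$. This yields $u(t)\in D(A)$, the identity $Au(t)=\int_0^t K(t-s)Af(s)\,\d s$, and the estimate by Young's inequality in one step.

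\textbf{Comparison.} Your argument is more elementary, gives an explicit formula for $Au$, and avoids the weak-closedness machinery. In fact $u(f_k)\to u(f)$ strongly in $L^p(0,T;H)$, as you use, so weak convergence is unnecessary. The paper's abstract argument only needs the estimate for smooth data and never has to identify $Au$ for general $f$.

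\textbf{Your Step 3.} This is a genuine addition. The paper's proof shows only $u\in L^p(0,T;D(A))$ and the bound. It leaves implicit the membership $u\in W_{\alpha,p}(0,T;H)$ that the definition of maximal $L^p$-regularity requires. Your closedness argument for $\partial_t^\alpha$ establishes this explicitly, together with $\partial_t^\alpha u=\mathrm{i}Au+f$.
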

\begin{proof}
Let $f \in L^{p}(0,T;D(A))$ be arbitrary. Choose a sequence of functions $f_{n} \in C_c^{\infty}(0, T;D(A))$ 
converging to $f$ in $L^{p}(0,T; D(A))$, and denote the unique weak solution 
associated with $f_{n}$ by $u_{n}$. By Lemma \ref{strons}, we have $u_n\in L^p(0,T;D(A))$.
By the estimate \eqref{Weak_estimate}, the functions $\{A u_{n}\}$ form a Cauchy sequence in $L^{p}(0,T;H)$ and therefore converges to a 
limit $v$ in $L^{p}(0,T;H)$. Using the definition of weak solutions, we can prove that \(\{u_{n}\}\) converges 
weakly to \(u\) in \(L^{p}(0,T;H)\), where \(u\) is the weak solution of 
\eqref{5fseq} associated with \(f\). Hence,
\[
u_n\rightharpoonup u\quad\text{weakly in }L^p(0,T;H),
\qquad Au_n\to v\quad\text{strongly in }L^p(0,T;H).
\]
Then 
\[
(u_n, Au_n)\rightharpoonup (u,v)\quad\text{weakly in }L^p(0,T;H\times H).
\]
On the other hand, since \(\{(u_n,Au_n)\}\subset L^p(0,T; \mathcal{G}(A))\), where 
\(\mathcal{G}(A)\subset H\times H\) denotes the graph of \(A\), and since \(A\) is closed, 
the graph \(\mathcal{G}(A)\) is a closed subset of \(H\times H\). 
It follows that the set $L^p(0,T;\mathcal{G}(A))$
is a closed subspace of \(L^p(0,T;H\times H)\) (due to Rellich–Kondrachov). 
In particular, being a closed linear subspace in a Banach space, 
\(L^p(0,T;\mathcal{G}(A))\) is weakly closed in \(L^p(0,T;H\times H)\) (see Theorem 3.7 in \cite{Bre2011}). Consequently
\((u,v)\in L^p(0,T;\mathcal{G}(A))\).
Then $u\in L^p(0,T;D(A))$ and $Au=v$ in $L^p(0,T;H)$. Consequently 
\[
u_n \rightharpoonup u\quad\text{weakly in }L^p(0,T;D(A)).
\]
Then, $\{\|u_n \|\}$ is bounded in $L^p(0,T;D(A))$ and (see Proposition 3.5 in \cite{Bre2011})
\begin{align*}
    \|u \|_{L^p(0,T;D(A))} &\le \liminf \|u_n \|_{L^p(0,T;D(A))}.
\end{align*}
Using \eqref{Weak_estimate}, it follows 
\begin{align*}
    \|u \|_{L^p(0,T;D(A))} &\le \liminf \,(C_0 \frac{T^\alpha}{\alpha} \|f_n\|_{L^p(0,T;D(A))})\\
    &= C_0 \frac{T^\alpha}{\alpha} \|f\|_{L^p(0,T;D(A))}.
\end{align*}
\end{proof}

To prove the maximal $L^p$-regularity for $1<p<\infty$, we use the following theorem; see \cite[Corollary 16]{Schw61}. Let $\mathcal{S}(\mathbb{R};H)$ denote the Schwartz space of rapidly decreasing smooth $H$-valued functions.
\begin{theorem}[Mikhlin's multiplier theorem]\label{mikh}
Assume that for a function $m \in C^1(\mathbb{R}^*;\mathcal{B}(H))$ the sets
$$
\{m(s): s \in \mathbb{R}^*\} \text { and }\left\{s\, m^{\prime}(s): s \in \mathbb{R}^*\right\}
$$
are bounded in $\mathcal{B}(H)$. Then the Fourier multiplier operator
$$
\mathcal{T}_m f=\mathcal{F}^{-1}(m(\cdot) \widehat{f}(\cdot)), \quad f \in \mathcal{S}(\mathbb{R};H),\quad
$$
extends to a bounded operator $\mathcal{T}_m$ on $L^p(\mathbb{R};H)$ for $1<p<\infty$. Here, $\widehat{f}$ denotes the Fourier transform of $f\in L^1(\mathbb{R};H)$ defined by
$$\widehat{f}(s)=\int_{\mathbb{R}} e^{-\mathrm{i}st}\,f(t)\,\d t, \qquad s\in \mathbb{R}.$$
\end{theorem}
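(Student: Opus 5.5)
This statement is the operator-valued Mikhlin theorem quoted from \cite[Corollary 16]{Schw61}. If I had to prove it rather than cite it, I would not use Calder\'on--Zygmund theory. Its Hörmander kernel condition in operator norm seems to need more than the one derivative assumed here. Instead I would follow the Marcinkiewicz multiplier proof, which needs only bounded variation of $m$ on dyadic intervals. I would exploit that $H$ is a Hilbert space, so $H \cong \ell^2$ after choosing a basis. This makes the scalar Littlewood--Paley and Hilbert-transform machinery available for $H$-valued and $\ell^2(H)$-valued functions.

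\textbf{Step 1 (reduction to dyadic pieces).} Let $I_j=\pm[2^j,2^{j+1})$, $j\in\mathbb{Z}$, and let $\Delta_j$ be the Fourier restriction to $I_j$. The scalar Littlewood--Paley inequality extends verbatim to $L^p(\mathbb{R};H)$ for $1<p<\infty$, because the $H$-valued Hilbert transform is bounded (Plancherel for $p=2$, then the scalar Calder\'on--Zygmund argument, whose kernel is scalar). It gives
\[
\|g\|_{L^p(\mathbb{R};H)}\simeq \Big\|\Big(\sum_j \|\Delta_j g\|^2\Big)^{1/2}\Big\|_{L^p(\mathbb{R})}.
\]
Since $\Delta_j \mathcal{T}_m f=\mathcal{T}_m\Delta_j f$, it suffices to bound the square function of $\mathcal{T}_m\Delta_j f$ by that of $\Delta_j f$.

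\textbf{Step 2 (fundamental theorem of calculus on each dyadic interval).} For $s\in I_j$, with $I_j=[2^j,2^{j+1})$ say, write
\[
m(s)=m(2^j)+\int_{2^j}^{2^{j+1}} m'(r)\,\mathds{1}_{[r,2^{j+1})}(s)\,\d r.
\]
This gives
\[
\mathcal{T}_m\Delta_j f=m(2^j)\Delta_j f+\int_{I_j} m'(r)\,S_{[r,2^{j+1})}\Delta_j f\,\d r ,
\]
where $S_J$ is the Fourier projection onto the interval $J$. The hypotheses give $\|m(2^j)\|\le C$ and $\int_{I_j}\|m'(r)\|\,\d r\le \sup_s\|s\,m'(s)\|\,\log 2\le C$. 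By Cauchy--Schwarz in $r$ with the weight $\|m'(r)\|$,
\[
\Big\|\int_{I_j} m'(r)S_r g\,\d r\Big\|^2\le C\int_{I_j}\|m'(r)\|\,\|S_r g\|^2\,\d r .
\]
Here the operators $m'(r)$ are only bounded, not commuting with anything, but they are harmless because they enter only through their norms.

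\textbf{Step 3 (square function for the partial-sum projections; main obstacle).} It remains to show, for $p\ge 2$,
\[
\Big\|\Big(\sum_j\int_{I_j}\|m'(r)\|\,\|S_{[r,2^{j+1})}\Delta_j f\|^2\d r\Big)^{1/2}\Big\|_{L^p}\le C\Big\|\Big(\sum_j\|\Delta_j f\|^2\Big)^{1/2}\Big\|_{L^p}.
\]
This is the heart of the proof. I would write $S_{[a,b)}=\tfrac{\mathrm{i}}{2}\big(M_a\mathcal{H}M_{-a}-M_b\mathcal{H}M_{-b}\big)$, where $M_a$ is modulation by $e^{\mathrm{i}at}$ and $\mathcal{H}$ is the Hilbert transform. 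Then I would use boundedness of the $L^2(\mathbb{R}\times\mathbb{Z},\mu;H)$-valued Hilbert transform on $L^p$, with $\mu=\|m'(r)\|\d r\otimes$ counting measure. This is again a Hilbert-space-valued singular integral with scalar kernel. With $\mu$ of uniformly bounded mass on each $I_j$, this yields the estimate, exactly as in the scalar Marcinkiewicz theorem (Stein's argument). The case $p=2$ is immediate from Plancherel, and $p>2$ follows as just described. For $1<p<2$ I would argue by duality. The adjoint multiplier $s\mapsto m(-s)^*$ satisfies the same hypotheses, since $\|m(s)^*\|=\|m(s)\|$ and $(m^*)'=(m')^*$. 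Hence $\mathcal{T}_m$ is bounded on $L^q$, and its adjoint on $L^p$.

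Combining Steps 1--3 with density of $\mathcal{S}(\mathbb{R};H)$ in $L^p(\mathbb{R};H)$ gives the bounded extension. Step 3 is the main obstacle. One must check carefully that no $\mathcal{R}$-boundedness of $\{m(s)\}$ is needed. In a Hilbert space, boundedness and $\mathcal{R}$-boundedness coincide, which is what makes this $C^1$ version valid.
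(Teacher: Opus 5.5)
The paper gives no proof of this statement. It quotes the result from \cite[Corollary 16]{Schw61} and uses it as a black box in the maximal $L^p$-regularity proof. Your proposal is therefore a genuinely different route, and it is correct.

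Instead of a Calder\'on--Zygmund argument with a $\mathcal{B}(H)$-valued kernel, you run the Marcinkiewicz multiplier proof. You use a Littlewood--Paley decomposition, then the fundamental theorem of calculus on each dyadic block, then the $L^2(\mu;H)$-valued Hilbert transform applied to the modulated partial-sum projections. The decisive observation, which you identify correctly, is that $m(2^j)$ and $m'(r)$ act pointwise in $(t,j,r)$. They therefore enter the square function only through their operator norms, because in a Hilbert space the square function is a genuine $\ell^2$-sum of $H$-norms. This is exactly why uniform boundedness can replace $\mathcal{R}$-boundedness.

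Your reluctance to use the kernel approach is also justified. Plancherel fails in operator norm for $\mathcal{B}(H)$-valued functions. So the H\"ormander condition $\int_{|t|>2|y|}\|K(t-y)-K(t)\|_{\mathcal{B}(H)}\,\d t\le C$ does not follow from the one-derivative hypothesis by the usual scalar computation.

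What your argument buys is a self-contained proof that needs only scalar-kernel singular integrals acting on Hilbert-space-valued functions. The same argument essentially yields the stronger Marcinkiewicz form, in which uniformly bounded variation of $m$ on dyadic intervals suffices. The citation buys only brevity.

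Two small simplifications. First, Step 3 works directly for every $p\in(1,\infty)$, since the $L^2(\mu;H)$-valued Hilbert transform is bounded on all these $L^p$ spaces, so the duality step is unnecessary. If you keep it, note that for the sesquilinear pairing the adjoint multiplier is $s\mapsto m(s)^*$ rather than $m(-s)^*$, though both satisfy the hypotheses. Second, the $H$-valued Littlewood--Paley inequality with sharp cutoffs in Step 1 uses the smooth square-function bound as well as the Hilbert transform. Both follow quickly from the scalar case via $H\cong\ell^2$ and the Marcinkiewicz--Zygmund extension theorem.
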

\begin{theorem}\label{thmMRL^p}
Let $0<\alpha<1,$ $1<p<\infty$, and assume that $f\in L^p(0,T;H).$ Then \eqref{5fseq} satisfies the maximal $L^p$-regularity. In particular, $u\in L^p(0,T;D(A))$ and there exists a constant $C>0$ such that
\[
\|u\|_{W_{\alpha,p}(0,T;H)}+\|u\|_{L^p(0,T;D(A))}\le C\,\|f\|_{L^p(0,T;H)}.
\]
\end{theorem}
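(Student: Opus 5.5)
We plan to prove Theorem \ref{thmMRL^p} with Mikhlin's multiplier theorem (Theorem \ref{mikh}). The idea is to write $Au$ as a Fourier multiplier applied to a zero extension of $f$.

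\textbf{Step 1: reduction to the whole line.} Let $\tilde f\in L^p(\mathbb{R};H)$ be the extension of $f$ by zero outside $(0,T)$. By Lemma \ref{lm:rep},
\[
u(t)=\int_{-\infty}^{t}K(t-s)\tilde f(s)\,\d s=(K*\tilde f)(t), \qquad 0<t<T,
\]
since $K(s)=0$ for $s\le0$. It suffices to take $f\in C_c^\infty(0,T;D(A))$ and close by density afterwards. For such $f$, the diagonal structure and the Laplace transform $\int_0^\infty e^{-zt}t^{\alpha-1}E_{\alpha,\alpha}(-\mathrm{i}\lambda_n t^\alpha)\,\d t=(z^\alpha+\mathrm{i}\lambda_n)^{-1}$, evaluated at $z=\mathrm{i}s$, should give
\[
\widehat{AK*\tilde f}(s)=m(s)\widehat{\tilde f}(s), \qquad m(s)\varphi_n=\frac{-\lambda_n}{(\mathrm{i}s)^\alpha+\mathrm{i}\lambda_n}\,\varphi_n ,
\]
with the principal branch $(\mathrm{i}s)^\alpha=|s|^\alpha e^{\mathrm{i}\alpha\pi\,\mathrm{sgn}(s)/2}$. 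Evaluating at the boundary $z=\mathrm{i}s$ requires care, because $|k_n(t)|\lesssim t^{-\alpha-1}$ at infinity (from \eqref{asymptotic expansions}). So $k_n\in L^1(0,\infty)$, and for each fixed $n$ the identity extends to the imaginary axis by dominated convergence.

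\textbf{Step 2: Mikhlin conditions (the main obstacle).} The key point is that the denominator never vanishes. We have $(\mathrm{i}s)^\alpha+\mathrm{i}\lambda_n=|s|^\alpha\cos(\alpha\pi/2)+\mathrm{i}\bigl(\lambda_n\pm|s|^\alpha\sin(\alpha\pi/2)\bigr)$, so the real part is at least $|s|^\alpha\cos(\alpha\pi/2)>0$ for $s\ne0$. This is exactly where $\alpha<1$ is used. For $s<0$ the imaginary part may vanish, so we will instead prove the uniform bound
\[
|(\mathrm{i}s)^\alpha+\mathrm{i}\lambda|\ge c_\alpha\,(|s|^\alpha+\lambda), \qquad \lambda>0,
\]
by homogeneity: set $r=|s|^\alpha/\lambda$ and minimize the continuous, nonvanishing function $|re^{\pm\mathrm{i}\alpha\pi/2}+\mathrm{i}|/(1+r)$ over $r\in[0,\infty]$. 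Here we use that $e^{-\mathrm{i}\alpha\pi/2}$ is never a negative multiple of $\mathrm{i}$, since $\alpha\pi/2\neq\pi/2$. This bound gives $\sup_{s,n}|m_n(s)|\le c_\alpha^{-1}$. Next,
\[
s\,m_n'(s)=\frac{\alpha\lambda_n(\mathrm{i}s)^\alpha}{\bigl((\mathrm{i}s)^\alpha+\mathrm{i}\lambda_n\bigr)^2},
\]
which is bounded by $\alpha\lambda_n|s|^\alpha/(c_\alpha^2(|s|^\alpha+\lambda_n)^2)\le\alpha c_\alpha^{-2}$. Since $m(s)$ is diagonal, its operator norm is the supremum over $n$, so both families are bounded in $\mathcal{B}(H)$. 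The $C^1$ regularity of $m$ in $\mathcal{B}(H)$ on $\mathbb{R}^*$ follows from the same uniform bounds applied to difference quotients.

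\textbf{Step 3: conclusion.} Theorem \ref{mikh} gives $\|Au\|_{L^p(0,T;H)}\le\|\mathcal{T}_m\tilde f\|_{L^p(\mathbb{R};H)}\le C\|f\|_{L^p(0,T;H)}$ for smooth $f$. Here we restrict to $(0,T)$ and use causality: $\mathcal{T}_m\tilde f=AK*\tilde f$ on $(0,T)$. By Lemma \ref{strons}, $\partial_t^\alpha u=\mathrm{i}Au+f$, so $\|u\|_{W_{\alpha,p}}\le C\|f\|_{L^p}$ as well. For general $f\in L^p(0,T;H)$ we approximate by $f_k\in C_c^\infty(0,T;D(A))$. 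The solutions $u(f_k)$ form a Cauchy sequence in $W_{\alpha,p}(0,T;H)\cap L^p(0,T;D(A))$, and they converge in $L^p(0,T;H)$ to the weak solution $u(f)$ by the bound in the existence theorem. Closedness of $\partial_t^\alpha$ and of $A$ then yields $u\in W_{\alpha,p}(0,T;H)\cap L^p(0,T;D(A))$ together with the stated estimate.
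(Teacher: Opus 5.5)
Your proposal is correct and follows the paper's proof. Both apply the Hilbert-space Mikhlin theorem to the diagonal symbol $m(s)\varphi_n=-\lambda_n((\mathrm{i}s)^\alpha+\mathrm{i}\lambda_n)^{-1}\varphi_n$; the paper writes $m(s)=-(\mathrm{i}s)^\alpha R(-\mathrm{i}(\mathrm{i}s)^\alpha,A)+\mathrm{i}I_H$ and bounds $m$ and $s\,m'$ with the sectorial resolvent estimate $\|R(z,A)\|\le M/|z|$ on $\Sigma_\theta$, $\theta=\frac{\pi}{2}(\frac{1+\alpha}{2}+1)$, which for self-adjoint $A$ is exactly your bound $|(\mathrm{i}s)^\alpha+\mathrm{i}\lambda|\ge c_\alpha(|s|^\alpha+\lambda)$ and makes the $C^1$ regularity of $m$ immediate, while you also supply details the paper leaves implicit (the boundary Fourier transform via $k_n\in L^1(0,\infty)$, causality on $(0,T)$, the $W_{\alpha,p}$ bound from $\partial_t^\alpha u=\mathrm{i}Au+f$, and the density/closedness step).
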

\begin{proof}
Note that, for $g \in C_c^{\infty}(0, T;D(A))$, 
$$Au(g)(t)=\int_{0}^{t}AK(t-s)g(s)\d s$$
is well-defined (see Lemma \ref{strons}). Now, for $g\in \mathcal{S}(\mathbb{R};H)$, $Au(g)(t)$ is defined as a singular integral of convolution type, since the kernel $AK(s)$ behaves like $s^{-1}$; see Lemma \ref{lmks}-(ii).\\
The Fourier transform is given by
$$\widehat{Au(g)}(s)=\widehat{AK(t)}(s)\widehat{g}(s), \qquad g\in \mathcal{S}(\mathbb{R};H),\quad s\in \mathbb{R}.$$
We have
$$
AK(t)\,\varphi_n =-\lambda_n k_n(t)\,\varphi_n. $$
Then
\begin{align*}
\widehat{AK(t)}(s)\varphi_n &= -\left(\int_{-\infty}^{\infty} e^{-\mathrm{i}st}\,\lambda_n k_n(t)\,\d t\right)\varphi_n\\
&= -\left(\int_0^\infty e^{-\mathrm{i}st}\,\lambda_n t^{\alpha-1}E_{\alpha,\alpha}(-\mathrm{i}\lambda_n t^\alpha)\,\d t\right)\varphi_n\\
&=-\frac{\lambda_n}{(\mathrm{i}s)^\alpha +\mathrm{i}\lambda_n}\varphi_n\\
& = \frac{\mathrm{i}\lambda_n}{-\mathrm{i}(\mathrm{i}s)^\alpha + \lambda_n}\,\varphi_n, \qquad s\in \mathbb{R}^*.
\end{align*}
Here, $s^\alpha$ for $s<0$ is defined by taking the principal branch. Thus
$$\widehat{AK(t)}(s)= -\mathrm{i}AR(-\mathrm{i}(\mathrm{i}s)^\alpha,A)=-(\mathrm{i}s)^\alpha R(-\mathrm{i}(\mathrm{i}s)^\alpha,A)+\mathrm{i}I_H=:m(s), \; s\in \mathbb{R}^*.$$
Then
$$s\,m'(s)=-\alpha (\mathrm{i}s)^\alpha R(-\mathrm{i}(\mathrm{i}s)^\alpha,A)-\alpha \mathrm{i} [(\mathrm{i}s)^\alpha R(-\mathrm{i}(\mathrm{i}s)^\alpha,A)]^2.$$
Since $A$ is self-adjoint and negative, it generates a bounded analytic semigroup on $H$. Therefore, for $\theta\in (\frac{\pi}{2},\pi)$ there is $M>0$ such that
$$\Sigma_{\theta}:=\{z \in \mathbb{C}^* \colon |\arg (z)|<\theta\} \subset \rho(A)$$
and
\begin{equation}\label{rest}
\|R(z, A)\|_{\mathcal{B}(H)} \leq \frac{M}{|z|}, \qquad z \in \Sigma_{\theta}.
\end{equation}
For $z=-\mathrm{i}(\mathrm{i}s)^\alpha$, we have
$$\arg(z)=\begin{cases}
-\frac{\pi}{2}(1-\alpha) \qquad\mbox{ if } s>0,\\
-\frac{\pi}{2}(1+\alpha) \qquad\mbox{ if } s<0.
\end{cases}$$
For all $0<\alpha<1,$ we take $\theta=\frac{\pi}{2}(\frac{1+\alpha}{2}+1)\in (\frac{\pi}{2},\pi)$ so that $z\in \Sigma_{\theta}$ for all $s\neq 0$.
Therefore, the resolvent estimate \eqref{rest} implies that $m(s)$ and $s\,m'(s)$ are bounded on $\mathbb{R}^*$ with values in $\mathcal{B}(H)$. Then, Theorem \ref{mikh} implies that the operator $g\mapsto Au(g)$ extends to a bounded operator on $L^p(0,T;H)$ denoted by the same notation. Hence, there is a constant $C>0$ such that
\[
 \|Au\|_{L^p(0,T;H)}\le C\,\|f\|_{L^p(0,T;H)},
\]
which yields the maximal $L^p$-regularity.
\end{proof}

\begin{rmk}\label{rmkcs}
Theorem \ref{thmMRL^p} does not hold for the integer derivative case, that is, when $\alpha=1$. Indeed, in this case, maximal regularity would imply that the operator $\mathrm{i} A$ generates an analytic semigroup; see \cite{Dore0}. This is impossible, since the spectrum $\sigma(\mathrm{i}A)=\{-\mathrm{i}\lambda_n\} \subset \mathrm{i}\mathbb{R}$ is unbounded and therefore cannot be contained in any sector of angle $<\frac{\pi}{2}$ on the left half-plane.
\end{rmk}

Next, we discuss the maximal $L^p$-regularity of the homogeneous problem
\begin{equation}\label{heqp}
\partial_t^\alpha (u-u_0) -\mathrm{i} Au=0, \qquad t \in (0,T).
\end{equation}

Henceforth, for $\beta >\frac{1}{p},$ we set
\begin{equation}\label{Interp}
\mathcal{X}_{\beta,p} :=(H,D(A))_{1-\frac{1}{\beta p},\,p}.
\end{equation}

We will use the Fourier method and the weak Lebesgue spaces $L^{p,\infty}(0,T;V)$. Following \cite{Ji21}, we prove:
\begin{theorem}\label{thmMRh}
The solution to \eqref{heqp}, given by
\begin{equation}\label{Formula SEH}
u(t)=\sum_{n=1}^{\infty}\left\langle u_0, \varphi_n\right\rangle E_{\alpha, 1}\left(-\mathrm{i} \lambda_n t^\alpha\right) \varphi_n,
\end{equation}
satisfies the following maximal $L^p$-regularity estimates:
\begin{align}
&\|\partial_t^\alpha (u-u_0)\|_{L^p(0,T;H)}
	+ \|Au\|_{L^p(0,T;H)}
	\le C \|u_0\|_{\mathcal{X}_{\alpha,p}},
    \qquad p\in\left(\tfrac1\alpha,\infty\right], \label{HML1}\\
%&\|\partial_t^\alpha (u-u_0)\|_{L^{\frac{1}{\alpha},\infty}(0,T;H)}
%	+ \|Au\|_{L^{\frac{1}{\alpha},\infty}(0,T;H)}
%	\le C \|u_0\|,
%    \quad p=\tfrac1\alpha, \label{HML2}\\
&\|\partial_t^\alpha (u-u_0)\|_{L^p(0,T;H)}
	+ \|Au\|_{L^p(0,T;H)}
	\le C \|u_0\|,
	 \qquad\qquad p\in\left[1,\tfrac1\alpha\right). \label{HML3}
\end{align}
Here the constant $C>0$ depends on $\alpha, p$ and $T$.
\end{theorem}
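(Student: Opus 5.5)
The plan is to work mode by mode, reduce to pointwise-in-time bounds for $Au(t)$, and then handle the two ranges of $p$ separately. The key reduction is that, since $u$ solves \eqref{heqp}, we have $\partial_t^\alpha(u-u_0)=\mathrm{i}Au$. It therefore suffices to bound $\|Au\|_{L^p(0,T;H)}$.

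To justify this reduction I would check that $u_n(t)=\langle u_0,\varphi_n\rangle E_{\alpha,1}(-\mathrm{i}\lambda_n t^\alpha)$ satisfies $u_n-u_n(0)=-\mathrm{i}\lambda_n J^\alpha u_n$. This follows from the series identity $E_{\alpha,1}(z)-1=zJ^\alpha$-type relation, i.e. $J^\alpha E_{\alpha,1}(-\mathrm{i}\lambda t^\alpha)=(E_{\alpha,1}(-\mathrm{i}\lambda t^\alpha)-1)/(-\mathrm{i}\lambda)$. It gives $u-u_0\in W_{\alpha,p}$ with $\partial_t^\alpha(u-u_0)=\mathrm{i}Au$ once $Au\in L^p(0,T;H)$.

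\textbf{Case $p<1/\alpha$.} By Corollary \ref{est12} with $\beta=1$,
\[
\lambda_n|E_{\alpha,1}(-\mathrm{i}\lambda_n t^\alpha)|\le \frac{C_0\lambda_n}{1+\lambda_n t^\alpha}\le C_0 t^{-\alpha}.
\]
Hence $\|Au(t)\|\le C_0t^{-\alpha}\|u_0\|$. Since $\alpha p<1$, the function $t^{-\alpha}$ lies in $L^p(0,T)$, which gives \eqref{HML3} with $C=C_0(T^{1-\alpha p}/(1-\alpha p))^{1/p}$.

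\textbf{Case $p>1/\alpha$ (including $p=\infty$).} Here I would use real interpolation. The solution map $S:u_0\mapsto Au$ is linear, and I would bound it on the two endpoint spaces:
\begin{itemize}
\item From the estimate above, $S:H\to L^{1/\alpha,\infty}(0,T;H)$ is bounded, since $t^{-\alpha}\in L^{1/\alpha,\infty}$.
\item For $u_0\in D(A)$, we have $\|Au(t)\|\le C_0\|Au_0\|$ by \eqref{es0}, so $S:D(A)\to L^\infty(0,T;H)$ is bounded.
\end{itemize}

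I would then apply the interpolation theorem for Lorentz spaces,
\[
\bigl(L^{p_0,\infty},L^\infty\bigr)_{\theta,p}=L^{p,p}=L^p,\qquad \frac1p=\frac{1-\theta}{p_0},\quad p_0=\frac1\alpha.
\]
This gives $\theta=1-\frac{1}{\alpha p}$, so that $S:(H,D(A))_{\theta,p}=\mathcal{X}_{\alpha,p}\to L^p(0,T;H)$ is bounded, which is \eqref{HML1}. The identification of the Lorentz interpolation space is the main technical point; I would cite Triebel, Section 1.18.6, for the vector-valued version.

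The case $p=\infty$ needs separate handling, since $(H,D(A))_{1,\infty}$ is not the standard interpolation space. There I would instead argue directly with $\mathcal{X}_{\alpha,\infty}$, interpreted as $D(A)$.

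An alternative route, avoiding abstract interpolation, is to use the $K$-functional directly. For each $t$, split $u_0=a+b$ with $a\in H$ and $b\in D(A)$ chosen near-optimally for $K(t^{\alpha},u_0)$. This gives
\[
\|Au(t)\|\le C\,t^{-\alpha}K(t^\alpha,u_0).
\]
A change of variables $\tau=t^\alpha$ then turns $\int_0^T\|Au(t)\|^p\,\d t$ into $\int(\tau^{-\theta}K(\tau,u_0))^p\,\d\tau/\tau$ with the correct $\theta$. I expect this computation to confirm the exponent $1-\frac{1}{\alpha p}$, and it is the step I would check most carefully.
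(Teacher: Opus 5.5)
Your proposal is correct and follows essentially the same route as the paper. The pointwise bound $\|Au(t)\|\le C_0t^{-\alpha}\|u_0\|$ gives \eqref{HML3} directly. For $p>\frac1\alpha$ the paper likewise interpolates the weak-type bound $H\to L^{\frac1\alpha,\infty}(0,T;H)$ against $D(A)\to L^\infty(0,T;H)$ with $\theta=1-\frac{1}{\alpha p}$, citing Triebel \S 1.18.6, and handles $p=\infty$ through $\|Au(t)\|\le C_0\|Au_0\|$, as you do. Your explicit check that $u-u_0=J^\alpha(\mathrm{i}Au)$ via $J^\alpha E_{\alpha,1}(-\mathrm{i}\lambda t^\alpha)=(E_{\alpha,1}(-\mathrm{i}\lambda t^\alpha)-1)/(-\mathrm{i}\lambda)$ fills in a step the paper leaves implicit. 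Your $K$-functional variant is a valid, more self-contained way to reach the same exponent without the Lorentz-space identification.
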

\begin{rmk}
Theorem \ref{thmMRh} improves the regularity result in \cite[Theorem 3.3]{CEMY25}, which yields the maximal $L^2$-regularity for $u_0$ belonging to the smaller space $D\left((-A)^\frac{1}{2}\right)$. Indeed, for $p=2$ and $\alpha>\frac{1}{2}$, we have
$$\mathcal{X}_{\alpha,2}\supset D\left((-A)^{1-\frac{1}{2\alpha}}\right) \supset D\left((-A)^\frac{1}{2}\right),$$
since $\alpha<1$.
\end{rmk}
\begin{proof}[Proof of Theorem \ref{thmMRh}]
Using the formula \eqref{Formula SEH} and the estimate \eqref{es0}, we obtain
\begin{align*}
	&\|u(t)\|\leq C_0 \|u_0\| \\
	&\|Au(t)\|\leq C_0 \|Au_0\|, \qquad u_0\in D(A),
\end{align*}
and
\begin{align*}
	\|Au(t)\|^2 &= \sum_{n=1}^\infty \lambda_n^2 |E_{\alpha, 1}\left(-\mathrm{i} \lambda_n t^\alpha\right)|^2 |\langle u_0,\varphi_n\rangle|^2\\
	&\le C_0^2 t^{-2\alpha} \sum_{n=1}^\infty\frac{\lambda_n^2 t^{2\alpha}}{1+\lambda_n^2 t^{2\alpha}}|\langle u_0,\varphi_n\rangle|^2\\
	&\le C_0^2 t^{-2\alpha} \sum_{n=1}^\infty|\langle u_0,\varphi_n\rangle|^2\\
	& =C_0^2 t^{-2\alpha}\|u_0\|^2.
\end{align*}
Then
\begin{equation}\label{MRH3}
	\|Au(t)\|\le C_0 t^{-\alpha}\|u_0\|.
\end{equation}
This implies the estimate \eqref{HML3}.

Now, we prove $\|Au\|_{L^{\frac{1}{\alpha},\infty}(0,T;H)}
\le C_0 \|u_0\|$. For $\lambda>0$ and $t\in (0,T)$, using \eqref{MRH3}, we have $\|Au(t)\|>\lambda$ implies $0<t< C_0^{\frac{1}{\alpha}} \|u_0\|^{\frac{1}{\alpha}}\lambda^{^{-\frac{1}{\alpha}}}.$ Then
\begin{align*}
	\|Au\|_{L^{\frac{1}{\alpha},\infty}(0,T;H)}&=\sup_{\lambda>0} \, \lambda\Big|\Big\{ t\in (0,T)\colon \|Au(t)\|>\lambda \Big\}\Big|^{\alpha}\\
	&\leq C_0\|u_0\|. 
\end{align*}
%This shows the estimate \eqref{HML2}.

Next, for $p>\frac{1}{\alpha}$. Using the two estimates:
\begin{align*}
	\|u\|_{L^{\infty}(0,T;D(A))}&\leq C_0\|u_0\|_{D(A)},\\
	\|u\|_{L^{\frac{1}{\alpha},\infty}(0,T;D(A))}&\leq C \|u_0\|,
\end{align*}
we obtain \eqref{HML1} for $p=\infty$. For $p\in (\frac{1}{\alpha},\infty)$, the real interpolation of the last two estimates gives
\[
\|u\|_{\left(L^{\frac{1}{\alpha},\infty}(0,T;D(A)),\,L^\infty(0,T;D(A))\right)_{1-\frac{1}{\alpha p},\,p}}
\le c \|u_0\|_{\mathcal{X}_{\alpha,p}}.
\]
By \cite[Theorem~2, \S 1.18.6]{Tr78}, we have
\[
\left(L^{\frac{1}{\alpha},\infty}(0,T;D(A)),\,L^\infty(0,T;D(A))\right)_{1-\frac{1}{\alpha p},\,p}
= L^p(0,T;D(A)).
\]
This proves \eqref{HML1} for $p\in\left(\tfrac1\alpha,\infty\right)$.
\end{proof}

By Duhamel's principle, we obtain the maximal $L^p$-regularity for the inhomogeneous fractional Schrödinger equation
\begin{equation}\label{pifseq}
\partial_t^\alpha (u-u_0) -\mathrm{i} Au=f, \qquad t \in (0,T).
\end{equation}
\begin{defn}
Let $1<p<\infty$. We say that the equation $\eqref{pifseq}$ satisfies the maximal $L^p$-regularity, if for all $f\in L^p(0,T;H)$ the corresponding solution satisfies $u-u_0\in W_{\alpha,p}(0,T;H)$. That is, the weak solution $u$ to $\eqref{pifseq}$ belongs to $L^p(0,T;D(A))$, where $D(A)$ is endowed with the graph norm.
\end{defn}
In this case, by the Closed Graph Theorem, there is $C>0$ such that
\begin{equation}
\|u-u_0\|_{W_{\alpha,p}(0,T;H)}+\|Au\|_{L^p(0,T;H)} \le C \|f\|_{L^p(0,T;H)}.
\end{equation}
\begin{theorem}\label{pithmMR}
Let $1<p<\infty, \; \alpha >\frac{1}{p}$, $u_0\in \mathcal{X}_{\alpha,p}$ and $f\in L^p(0,T;H).$ Then \eqref{pifseq} satisfies the maximal $L^p$-regularity. That is, the solution given by
\begin{equation*}
u(t)=\sum_{n=1}^{\infty}\left[\left\langle u_0, \varphi_n\right\rangle E_{\alpha, 1}\left(-\mathrm{i} \lambda_n t^\alpha\right) + \int_0^t \langle f(s),\varphi_n\rangle\,(t-s)^{\alpha-1}E_{\alpha,\alpha}\big(-\mathrm{i}\lambda_n (t-s)^\alpha\big)\,\d s \right]\varphi_n,
\end{equation*}
satisfies $u\in L^p(0,T;D(A))$, and there exists a constant $C>0$ such that
\[
\|u-u_0\|_{W_{\alpha,p}(0,T;H)} + \|u\|_{L^p(0,T;D(A))}\le C\left(\|u_0\|_{\mathcal{X}_{\alpha,p}}+\|f\|_{L^p(0,T;H)}\right).
\]
\end{theorem}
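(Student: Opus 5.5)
We split the solution by linearity (Duhamel's principle) as $u = v + w$. Here $v$ is the homogeneous solution \eqref{Formula SEH} of \eqref{heqp} with initial datum $u_0$. The function $w$ is the weak solution of \eqref{5fseq} with right-hand side $f$ and zero initial datum, given by \eqref{solf}, i.e.\ $w(t)=\int_0^t K(t-s)f(s)\,\d s$. The series in the statement is exactly the sum of \eqref{Formula SEH} and \eqref{solf}, so it suffices to estimate $v$ and $w$ separately in $L^p(0,T;D(A))$ and $v-u_0$, $w$ in $W_{\alpha,p}(0,T;H)$.

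For $w$ we invoke Theorem \ref{thmMRL^p} directly. It gives $w\in W_{\alpha,p}(0,T;H)\cap L^p(0,T;D(A))$ with $\|w\|_{W_{\alpha,p}(0,T;H)}+\|Aw\|_{L^p(0,T;H)}\le C\|f\|_{L^p(0,T;H)}$. If the $W_{\alpha,p}$ bound is not read off explicitly there, we obtain it from the equation. Lemma \ref{strons} gives $\partial_t^\alpha w=\mathrm{i}Aw+f$ for smooth $f$; this identity passes to the limit by density, using the bound on $Aw$ and closedness of $\partial_t^\alpha$. Hence $\|\partial_t^\alpha w\|_{L^p}\le \|Aw\|_{L^p}+\|f\|_{L^p}$.

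For $v$, since $\alpha>\frac1p$ means $p\in(\frac1\alpha,\infty)$, estimate \eqref{HML1} of Theorem \ref{thmMRh} gives $\|Av\|_{L^p(0,T;H)}\le C\|u_0\|_{\mathcal{X}_{\alpha,p}}$. Next we need $v-u_0\in W_{\alpha,p}(0,T;H)$ with $\partial_t^\alpha(v-u_0)=\mathrm{i}Av$. Componentwise, $E_{\alpha,1}(-\mathrm{i}\lambda_n t^\alpha)-1 = -\mathrm{i}\lambda_n J^\alpha[E_{\alpha,1}(-\mathrm{i}\lambda_n t^\alpha)]$, which follows from the power series (termwise $J^\alpha t^{\alpha k}$). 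Thus $v_n-\langle u_0,\varphi_n\rangle = J^\alpha(-\mathrm{i}\lambda_n v_n)$. Summing in $H$, $v-u_0=J^\alpha(\mathrm{i}Av)$ with $\mathrm{i}Av\in L^p(0,T;H)$. Here we use that $J^\alpha$ is bounded on $L^p(0,T;H)$ and commutes with the projections onto $\varphi_n$, so the identity holds in $L^p(0,T;H)$. Hence $\|v-u_0\|_{W_{\alpha,p}}=\|Av\|_{L^p}\le C\|u_0\|_{\mathcal{X}_{\alpha,p}}$, and \eqref{HML1} controls the $\partial_t^\alpha(v-u_0)$ term as well. Also, $u_0\in\mathcal X_{\alpha,p}\subset H$ and the sequence of partial sums is consistent with Proposition \ref{fractional_space}: $v-u_0$ is continuous with value $0$ at $t=0$.

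Finally we add the pieces. We have $u-u_0=(v-u_0)+w\in W_{\alpha,p}(0,T;H)$ and $Au=Av+Aw\in L^p(0,T;H)$. We bound $\|u\|_{L^p(0,T;D(A))}=\|Au\|_{L^p}$ using the equivalent norm on $D(A)$. The triangle inequality then gives the stated estimate. We also check that $u$ is the weak solution of \eqref{pifseq}, which follows by pairing $u-u_0\in W_{\alpha,p}$ against $\psi\in\Psi$ using the duality identity for $(\partial_t^\alpha)'$.

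The main obstacle is the rigorous justification, in the homogeneous part, that the $H$-valued series $v-u_0$ lies in $J^\alpha L^p(0,T;H)$ with derivative $\mathrm{i}Av$. Theorem \ref{thmMRh} only controls $Av$ in $L^p$, so the termwise identity must be transferred to the infinite sum. We handle this by noting that the partial sums converge in $L^p(0,T;H)$ and their images under $\mathrm{i}A$ converge in $L^p(0,T;H)$ by \eqref{HML1}. Since $J^\alpha$ is continuous, $J^\alpha$ of the limit equals the limit of $J^\alpha$ of the partial sums, and since the operator $\partial_t^\alpha$ is closed, we conclude.
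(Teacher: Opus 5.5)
Your proposal is correct and is essentially the paper's argument. The paper gives no written proof beyond invoking Duhamel's principle: it superposes the homogeneous estimate \eqref{HML1} of Theorem~\ref{thmMRh} (applicable because $\alpha>\frac1p$) with the inhomogeneous estimate of Theorem~\ref{thmMRL^p}. Your extra justifications fill in steps the paper leaves implicit: the componentwise identity $v-u_0=J^\alpha(\mathrm{i}Av)$, carried over to $H$ because $J^\alpha$ commutes with $\langle\cdot,\varphi_n\rangle$, and the density/closedness argument giving $\partial_t^\alpha w=\mathrm{i}Aw+f$.
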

\begin{rmk}
The assumption $\alpha>\frac{1}{p}$ implies that the solution of \eqref{pifseq} belongs to $C([0,T];H)$ (see Proposition \ref{propc}) so that the initial datum $u(0)=u_0$ makes sense.
\end{rmk}

\section{Application to nonlinear Schr\"odinger equations}\label{sec6}
In this section, we apply the maximal regularity results of the previous section to study the well-posedness of some quasilinear and semilinear Schr\"odinger equations. We refer, for example, to \cite{Mey14} for quasilinear reaction-diffusion systems with an integer derivative.

To simplify the notation, for fixed $\alpha\in (0,1)$ and $p>\frac{1}{\alpha},$ we set
\begin{equation}\label{MRsp}
    \mathcal{M}\mathcal{R}_{\alpha,p}(T):=\left\{u\in C([0,T];H)\cap L^p(0,T;D(A))\colon  u-u(0)\in W_{\alpha,p}(0,T;H) \right\}
\end{equation}
for the maximal regularity space. By Proposition \ref{fractional_space}, the space $\mathcal{M}\mathcal{R}_{\alpha,p}(T),$ endowed with the norm
\[
\|u\|_{\mathcal{M}\mathcal{R}_{\alpha,p}(T)} := \|u-u(0)\|_{W_{\alpha,p}(0,T;H)} + \|u\|_{L^p(0,T;D(A))}+ \|u(0)\|, \qquad u\in \mathcal{M}\mathcal{R}_{\alpha,p}(T) ,
\]
is a Banach space.

\subsection{Quasilinear equations}
We consider the quasilinear fractional Schr\"odinger equation
\begin{equation}\label{nfseq0}
\partial_t^\alpha (u(t)-u_0) -\mathrm{i} \mathcal{A}(u(t))u(t)=0, \qquad t \in (0,T),
\end{equation}
where $\alpha\in (0,1)$ and $p>\frac{1}{\alpha}$.

Next, we choose $0<\varepsilon<\alpha-\frac{1}{p}$ and introduce the following assumption for the quasilinear term:
\begin{itemize}
    \item[\textbf{(H$_\varepsilon$)}] $\mathcal{A}(0)=A$ and $\mathcal{A}\colon \mathcal{X}_{\alpha-\varepsilon,p} \to \mathcal{B}(D(A),H)$ is Lipschitz continuous on bounded subsets of $\mathcal{X}_{\alpha-\varepsilon,p}$.
\end{itemize}
Then, we prove the following key lemma.
\begin{lem}
The continuous embedding
\[
\mathcal{M}\mathcal{R}_{\alpha,p}(T)
\hookrightarrow C\left([0,T];\mathcal{X}_{\alpha-\varepsilon,p}\right)
\]
holds, i.e., there is a constant \(C_\mathrm{em}>0\), depending on \(\alpha,p,\varepsilon,T\), such that for every
\(u\in \mathcal{M}\mathcal{R}_{\alpha,p}(T)\), we have
\begin{equation}\label{emb}
\|u\|_{C([0,T];\mathcal{X}_{\alpha-\varepsilon,p})}
\le C_\mathrm{em} \|u\|_{\mathcal{M}\mathcal{R}_{\alpha,p}(T)}.
\end{equation}
\end{lem}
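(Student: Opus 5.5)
We need to prove the embedding of $\mathcal{M}\mathcal{R}_{\alpha,p}(T)$ into $C([0,T];\mathcal{X}_{\alpha-\varepsilon,p})$, where $\mathcal{X}_{\alpha-\varepsilon,p}=(H,D(A))_{1-\frac{1}{(\alpha-\varepsilon)p},p}$. The idea is to use a trace-type argument. Since $(\alpha-\varepsilon)p>1$, the interpolation parameter is $\theta_\varepsilon:=1-\frac{1}{(\alpha-\varepsilon)p}$, which is strictly smaller than $\theta_0:=1-\frac{1}{\alpha p}$. We therefore expect a Hölder-type gain in time, of exponent $\varepsilon$-dependent, and this gives continuity.

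\emph{Step 1 (reduction).} Write $u=u(0)+v$ with $v:=u-u(0)\in W_{\alpha,p}(0,T;H)\cap L^p(0,T;D(A))$; the last membership holds because $u\in L^p(0,T;D(A))$ forces $u(0)\in D(A)$, and this is legitimate if we use that the trace space contains $u(0)$. Since this is delicate, it is cleaner to treat $u$ directly via the pair of spaces $E_0:=W_{\alpha,p}$-type and $E_1:=L^p(0,T;D(A))$.

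\emph{Step 2 (mixed derivative / interpolation).} I would use the standard mixed-derivative estimate: for $w\in W_{\alpha,p}(0,T;H)\cap L^p(0,T;D(A))$ and $\eta\in(0,1)$,
\[
w\in W_{\eta\alpha,p}\bigl(0,T;(H,D(A))_{1-\eta,p}\bigr)
\]
with a norm bound by $\|w\|_{W_{\alpha,p}}+\|w\|_{L^p(D(A))}$. This can be proved via the Fourier/multiplier representation of $J^{-\alpha}$ and $A$, in the spirit of Theorem \ref{thmMRL^p}, using the sectorial functional calculus of $-A$ and of the Riemann--Liouville operator.

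\emph{Step 3 (choice of parameters).} Next choose $\eta$ such that $1-\eta\ge\theta_\varepsilon$ and $\eta\alpha>\frac{1}{p}$. Taking $\eta=\frac{\alpha-\varepsilon}{\alpha}$ gives $\eta\alpha=\alpha-\varepsilon>\frac{1}{p}$, which is the reason for the restriction $\varepsilon<\alpha-\frac{1}{p}$. It remains to compare $1-\eta=\frac{\varepsilon}{\alpha}$ with $\theta_\varepsilon$. If the comparison has the wrong sign, I would instead use a genuinely trace-space argument, as follows. The trace space of $W_{\beta,p}\cap L^p(D(A))$ with $\beta=\alpha$ at $t=0$ is $\mathcal{X}_{\alpha,p}$, by the analogue of Theorem \ref{thmMRh} combined with a retraction argument. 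Then shifting in time gives $\sup_t\|u(t)\|_{\mathcal{X}_{\alpha,p}}$ control, and $\mathcal{X}_{\alpha,p}\hookrightarrow\mathcal{X}_{\alpha-\varepsilon,p}$ because $\theta_0>\theta_\varepsilon$. The $\varepsilon$-room then yields continuity by interpolating the $L^\infty(\mathcal{X}_{\alpha,p})$ bound with the $C([0,T];H)$ bound from Proposition \ref{fractional_space}, giving $\|u(t)-u(s)\|_{\mathcal{X}_{\alpha-\varepsilon,p}}\le \|u(t)-u(s)\|^{1-\kappa}_{H}\,\|u(t)-u(s)\|^{\kappa}_{\mathcal{X}_{\alpha,p}}$ by the reiteration theorem.

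\emph{Main obstacle.} The main difficulty is the uniform-in-$t$ trace bound $\|u(t)\|_{\mathcal{X}_{\alpha,p}}\lesssim\|u\|_{\mathcal{M}\mathcal{R}_{\alpha,p}(T)}$ for fractional (memory-carrying) time regularity. Translation $t\mapsto u(t_0+\cdot)$ does not preserve $W_{\alpha,p}$ norms, because of the nonlocality of $J^{-\alpha}$. To handle this, I would first extend $u-u(0)$ by zero to $\mathbb{R}_-$; this is allowed since $W_{\alpha,p}$ functions vanish at $0$ for $\alpha p>1$. I would then extend it to $\mathbb{R}$ with a bounded extension. This identifies $W_{\alpha,p}$ with a Bessel-potential space $H^{\alpha}_p(\mathbb{R};H)$. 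In that space, translation invariance plus the classical trace theorem (Triebel, \S1.8) give $H^\alpha_p(\mathbb{R};H)\cap L^p(\mathbb{R};D(A))\hookrightarrow BUC(\mathbb{R};\mathcal{X}_{\alpha,p})$. The losses in $\varepsilon$ are then absorbed into the constant $C_\mathrm{em}$.
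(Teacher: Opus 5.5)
The gap is in how you treat the initial value. In Step 1 you assert that $u\in L^p(0,T;D(A))$ forces $u(0)\in D(A)$; this is false. The initial value of an element of $\mathcal{M}\mathcal{R}_{\alpha,p}(T)$ lies only in the trace space $\mathcal{X}_{\alpha,p}$, which is strictly larger than $D(A)$. For instance, the solution of \eqref{heqp} from Theorem \ref{thmMRh} with $u_0\in\mathcal{X}_{\alpha,p}\setminus D(A)$ belongs to $\mathcal{M}\mathcal{R}_{\alpha,p}(T)$ and has $u(0)=u_0\notin D(A)$. Consequently $v=u-u(0)$ is in $W_{\alpha,p}(0,T;H)$ but in general not in $L^p(0,T;D(A))$. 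Conversely, $u$ is in $L^p(0,T;D(A))$ but not in $W_{\alpha,p}(0,T;H)$ unless $u(0)=0$ (Proposition \ref{fractional_space}). So neither function lies in the intersection to which you apply the mixed-derivative estimate of Step 2. Your final argument has the same problem: it extends $u-u(0)$ by zero and invokes a trace theorem in $H^\alpha_p(\mathbb{R};H)\cap L^p(\mathbb{R};D(A))$, but that function is not known to be in $L^p(\mathbb{R};D(A))$. Relatedly, the trace at $t=0$ of $W_{\alpha,p}(0,T;H)\cap L^p(0,T;D(A))$ is $\{0\}$, not $\mathcal{X}_{\alpha,p}$. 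Moreover, Theorem \ref{thmMRh} only supplies the extension (right-inverse) direction; it does not give the boundedness of the trace map, which is what the embedding actually needs.

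The missing idea is to put $u$ itself into a time-regularity space that contains constants. This is what the paper does. By \cite{YaLp22}, $W_{\alpha,p}(0,T;H)\hookrightarrow W^{\alpha-\varepsilon,p}(0,T;H)$ (Sobolev--Slobodeckii), and the constant function $u(0)$ belongs to $W^{\alpha-\varepsilon,p}(0,T;H)$ with norm $T^{1/p}\|u(0)\|$. Hence $u=v+u(0)\in W^{\alpha-\varepsilon,p}(0,T;H)\cap L^p(0,T;D(A))$, and the trace embedding of \cite{Ve23}, valid because $(\alpha-\varepsilon)p>1$, gives \eqref{emb} directly. The $\varepsilon$ is lost only in the first embedding, since $W_{\alpha,p}\hookrightarrow W^{\alpha,p}$ is not known for $p\neq 2$.

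Your Bessel-potential route can be repaired the same way: take $u\in H^\alpha_p(0,T;H)\cap L^p(0,T;D(A))$ and extend it by reflection rather than by zero. It then rests on two facts you only assert: the identification $J^\alpha L^p(0,T;H)={}_0H^\alpha_p(0,T;H)$, and a trace theorem for the fractional-order space $H^\alpha_p\cap L^p(D(A))$. The trace method in Triebel \S 1.8 is formulated with integer-order derivatives, so it does not supply the latter. With both facts in hand you would get the stronger embedding into $C([0,T];\mathcal{X}_{\alpha,p})$, and your reiteration step would be unnecessary.

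As for Step 3, the wrong sign is an artifact of choosing $\eta=(\alpha-\varepsilon)/\alpha$. Any $\eta\in\bigl(\frac{1}{\alpha p},\frac{1}{(\alpha-\varepsilon)p}\bigr)$ gives both $\eta\alpha>\frac{1}{p}$ and $1-\eta>1-\frac{1}{(\alpha-\varepsilon)p}$. The strict inequality also absorbs the difference between $D((-A)^{1-\eta})$, which mixed-derivative theorems actually produce, and $(H,D(A))_{1-\eta,p}$. So a mixed-derivative proof also works, provided it is applied to $u$ in a space that allows nonzero initial values.
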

\begin{proof}
First, following \cite[Theorem 2.1~(i)]{YaLp22}, we can prove that the embedding
\begin{equation}\label{SSlob}
W_{\alpha,p}(0,T;H) \hookrightarrow W^{\alpha-\varepsilon,p}(0,T;H)
\end{equation}
is continuous, where $W^{\alpha-\varepsilon,p}(0,T;H)$ is the Sobolev-Slobodeckii space (see Adams \cite{Ad75}). Then, by $(\alpha-\varepsilon)p>1$ and extension by zero to $(0,\infty),$ \cite[Theorem 1.2]{Ve23} yields
$$W^{\alpha-\varepsilon,p}(0,T;H)\cap L^p(0,T;D(A))
\hookrightarrow
C\left([0,T];\mathcal{X}_{\alpha-\varepsilon,p}\right)$$
continuously. Let $u\in \mathcal{M}\mathcal{R}_{\alpha,p}(T)$ and $v:=u-u(0)\in W_{\alpha,p}(0,T;H)$. We have $v\in W^{\alpha-\varepsilon,p}(0,T;H)$ and $u(0)\in W^{\alpha-\varepsilon,p}(0,T;H).$ Then $u\in W^{\alpha-\varepsilon,p}(0,T;H)\cap L^p(0,T;D(A)).$ Therefore, $u\in C\left([0,T];\mathcal{X}_{\alpha-\varepsilon,p}\right)$ and
\begin{align*}
\|u\|_{C\left([0,T];\mathcal{X}_{\alpha-\varepsilon,p}\right)} & \le C \left(\|u\|_{W^{\alpha-\varepsilon,p}(0,T;H)}+\|u\|_{L^p(0,T;D(A))}\right)\\
& \le C \left(\|u-u_0\|_{W^{\alpha-\varepsilon,p}(0,T;H)}+T^{\frac{1}{p}}\|u_0\|+\|u\|_{L^p(0,T;D(A))}\right)\\
&\le C \|u\|_{\mathcal{M}\mathcal{R}_{\alpha,p}(T)}.
\end{align*}
This yields the desired result.
\end{proof}

Now, we state the well-posedness of \eqref{nfseq0} for arbitrary $T$ and small initial data.
\begin{theorem}\label{thm:quasi}
Let $T>0$ be fixed, and assume the assumption \textbf{(H$_\varepsilon$)}. Then there exists $\varepsilon^\prime>0$ such that for all $u_0\in \overline{B}_{\mathcal{X}_{\alpha,p}}(\varepsilon^\prime)$,
%$u_0\in \mathcal{X}_{\alpha,p}$ such that $ \|u_0\|_{\mathcal{X}_{\alpha,p}} \le \varepsilon^\prime,$ 
the equation \eqref{nfseq0} has a unique solution $u\in \mathcal{M}\mathcal{R}_{\alpha,p}(T).$
\end{theorem}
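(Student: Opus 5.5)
We rewrite \eqref{nfseq0} as a linear problem with a perturbation and use a contraction argument in a small ball of $\mathcal{M}\mathcal{R}_{\alpha,p}(T)$. Since $\mathcal{A}(0)=A$, the equation reads
\[
\partial_t^\alpha(u-u_0)-\mathrm{i}Au=\mathrm{i}\big(\mathcal{A}(u)-\mathcal{A}(0)\big)u=:F(u).
\]
For $r>0$ to be fixed, let $\mathcal{K}_r$ denote the set of $v\in\mathcal{M}\mathcal{R}_{\alpha,p}(T)$ with $v(0)=u_0$ and $\|v\|_{\mathcal{M}\mathcal{R}_{\alpha,p}(T)}\le r$. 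This is a closed subset of the Banach space $\mathcal{M}\mathcal{R}_{\alpha,p}(T)$. For $v\in\mathcal{K}_r$, define $\Phi(v)$ as the solution $u$ of the linear problem \eqref{pifseq} with initial datum $u_0$ and right-hand side $f=F(v)$.

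By Theorem \ref{pithmMR} (we have $\alpha>1/p$), the map $\Phi$ takes values in $\mathcal{M}\mathcal{R}_{\alpha,p}(T)$. Together with Proposition \ref{propc} and Theorem \ref{thmMRh}, which give continuity into $H$ and $u(0)=u_0$, we obtain
\[
\|\Phi(v)\|_{\mathcal{M}\mathcal{R}_{\alpha,p}(T)}\le C_{\mathrm{MR}}\big(\|u_0\|_{\mathcal{X}_{\alpha,p}}+\|F(v)\|_{L^p(0,T;H)}\big),
\]
where we use the continuous embedding $\mathcal{X}_{\alpha,p}\hookrightarrow H$ for the $\|u(0)\|$ term.

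\textbf{Bounds on the nonlinearity.} The embedding \eqref{emb} gives $\sup_t\|v(t)\|_{\mathcal{X}_{\alpha-\varepsilon,p}}\le C_{\mathrm{em}}r$. By \textbf{(H$_\varepsilon$)}, with $L=L(C_{\mathrm{em}}r_0)$ the Lipschitz constant on the ball of radius $C_{\mathrm{em}}r_0$ for some fixed $r_0\ge r$, we get
\[
\|F(v)(t)\|\le L\,\|v(t)\|_{\mathcal{X}_{\alpha-\varepsilon,p}}\,\|Av(t)\|,
\]
hence $\|F(v)\|_{L^p(0,T;H)}\le LC_{\mathrm{em}}r\cdot r$. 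For the difference, split
\[
F(v)-F(w)=\mathrm{i}\big(\mathcal{A}(v)-\mathcal{A}(w)\big)v+\mathrm{i}\big(\mathcal{A}(w)-\mathcal{A}(0)\big)(v-w).
\]
This yields
\[
\|F(v)-F(w)\|_{L^p(0,T;H)}\le 2LC_{\mathrm{em}}r\,\|v-w\|_{\mathcal{M}\mathcal{R}_{\alpha,p}(T)}.
\]
Here we use that $v-w$ vanishes at $t=0$ and lies in $\mathcal{M}\mathcal{R}_{\alpha,p}(T)$, so that \eqref{emb} applies to it. Also, $\Phi(v)-\Phi(w)$ solves \eqref{pifseq} with zero initial datum, so by Theorem \ref{thmMRL^p} it is bounded by $C_{\mathrm{MR}}\|F(v)-F(w)\|_{L^p}$.

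\textbf{Choice of constants.} Choose $r\le r_0$ so small that $2C_{\mathrm{MR}}LC_{\mathrm{em}}r\le\frac12$. Then take $\varepsilon'=r/(2C_{\mathrm{MR}})$. With this choice $\Phi$ maps $\mathcal{K}_r$ into itself, because $C_{\mathrm{MR}}\varepsilon'+C_{\mathrm{MR}}LC_{\mathrm{em}}r^2\le r/2+r/4$, and $\Phi$ is a $\frac12$-contraction. Banach's fixed point theorem then gives a solution in $\mathcal{K}_r$; the time $T$ stays arbitrary because all constants depend on $T$ only through $C_{\mathrm{MR}}$ and $C_{\mathrm{em}}$, which are fixed once $T$ is fixed.

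\textbf{Uniqueness.} Uniqueness in the ball is immediate from the contraction. For uniqueness in all of $\mathcal{M}\mathcal{R}_{\alpha,p}(T)$, I would argue by continuation: given two solutions, consider the first time they separate, and restart the estimate on a short interval using that the $L^p(0,\tau;D(A))$ norms tend to $0$ as $\tau\to0$. This step is delicate for a memory equation, since the restarted problem carries a history term. I would first try to handle it by applying the contraction estimate on $[0,\tau]$, where the smallness comes from $\|Av\|_{L^p(0,\tau;H)}$ rather than from $r$, and then iterating.

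\textbf{Main obstacle.} The main difficulty is that the contraction constant must be uniform in $T$: the embedding constant $C_{\mathrm{em}}$ and the maximal regularity constant $C_{\mathrm{MR}}$ must be used with fixed $T$, and one must check that \eqref{emb} controls $\sup_t\|v(t)\|_{\mathcal{X}_{\alpha-\varepsilon,p}}$ for functions with $v(0)=0$ without additional terms.
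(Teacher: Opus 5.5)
Your proposal is correct and follows the paper's proof. Both run a Banach fixed-point argument for $v\mapsto\mathcal{L}\big(\mathrm{i}(\mathcal{A}(v)-A)v,\,u_0\big)$ on a small closed ball of $\mathcal{M}\mathcal{R}_{\alpha,p}(T)$, using the bounded solution operator from Theorem~\ref{pithmMR}, the embedding \eqref{emb} and the Lipschitz bound from \textbf{(H$_\varepsilon$)}; your extra constraint $v(0)=u_0$ and the fixed reference radius $r_0$ for the Lipschitz constant are harmless refinements. Note that the paper's contraction argument also yields uniqueness only inside the ball $B_r$, so the continuation step for uniqueness in all of $\mathcal{M}\mathcal{R}_{\alpha,p}(T)$, which you leave open, is not supplied there either.
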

\begin{proof}
We define the map $\mathcal{L}(f,u_0)=u,$ where $u$ is the solution to \eqref{fseq0}. By the maximal regularity (Theorem \ref{pithmMR}), we have $$\mathcal{L}\in \mathcal{B}\left(L^p(0,T;H)\times \mathcal{X}_{\alpha,p}, \mathcal{M}\mathcal{R}_{\alpha,p}(T)\right).$$ 
We define
$$\Psi(u)(t):=\mathrm{i}(\mathcal{A}(u(t))-A)u(t) \qquad \text{ and } \qquad \Phi(u):=\mathcal{L}(\Psi(u), u_0).$$
Then $u$ solves \eqref{nfseq0} if and only if $\Phi(u)=u$. Therefore, we will apply the Banach contraction mapping theorem. We set
%$$B_r:=\left\{v\in \mathcal{M}\mathcal{R}_{\alpha,p}(T) \colon \|v\|_{\mathcal{M}\mathcal{R}_{\alpha,p}(T)} \le r\right\},$$
$$B_r:=\overline{B}_{\mathcal{M}\mathcal{R}_{\alpha,p}(T)}(r),$$
where $r>0$ is sufficiently small to be fixed later. First, we prove that $\Phi (B_r)\subset B_r$. For $v\in B_r,$ by \eqref{emb} and $\mathcal{A}(0)=A$, we have 
\begin{align*}
\|\mathcal{A}(v(t))-A\|_{\mathcal{B}(D(A),H)} &\le L_\mathcal{A} \|v(t)\|_{\mathcal{X}_{\alpha-\varepsilon,p}}\\
&\le L_\mathcal{A} C_{\mathrm{em}} \|v\|_{\mathcal{M}\mathcal{R}_{\alpha,p}(T)}\\
&\le  L_\mathcal{A} C_{\mathrm{em}} r\\
& \le \frac{1}{4\|\mathcal{L}\|}
\end{align*}
for small $r$. Then
\begin{align*}
&\|\Phi(v)\|_{\mathcal{M}\mathcal{R}_{\alpha,p}(T)} \le \|\mathcal{L}\| \left(\|\Psi(v)\|_{L^p(0,T;H)}+\|u_0\|_{\mathcal{X}_{\alpha,p}}\right)\\
&\le \|\mathcal{L}\| \left(\max_{t\in [0,T]} \|\mathcal{A}(v(t))-A\|_{\mathcal{B}(D(A),H)}\|v\|_{\mathcal{M}\mathcal{R}_{\alpha,p}(T)}+\|u_0\|_{\mathcal{X}_{\alpha,p}}\right)\\
& \le \|\mathcal{L}\| \left(\frac{1}{4\|\mathcal{L}\|} r +\frac{1}{4\|\mathcal{L}\|} r\right)=\frac{r}{2}<r,
\end{align*}
if we choose $\varepsilon^\prime:=\dfrac{r}{4\|\mathcal{L}\|}$. Now, we prove that $\Phi$ is a contraction on $B_r$ in a similar way. For $v_1, v_2\in B_r,$ we have
\begin{align*}
&\|\Phi(v_1)-\Phi(v_2)\|_{\mathcal{M}\mathcal{R}_{\alpha,p}(T)} \le \|\mathcal{L}\| \|\Psi(v_1)-\Psi(v_2)\|_{L^p(0,T;H)}\\
&\le \|\mathcal{L}\| \left(\max_{t\in [0,T]} \|\mathcal{A}(v_1(t))-A\|_{\mathcal{B}(D(A),H)} \|v_1-v_2\|_{\mathcal{M}\mathcal{R}_{\alpha,p}(T)}\right.\\
& \hspace{1.3cm} \left. + \max_{t\in [0,T]} \|\mathcal{A}(v_1(t))-\mathcal{A}(v_2(t))\|_{\mathcal{B}(D(A),H)} \|v_2\|_{\mathcal{M}\mathcal{R}_{\alpha,p}(T)}\right)\\
& \le \|\mathcal{L}\| \left(\frac{1}{4\|\mathcal{L}\|} \|v_1-v_2\|_{\mathcal{M}\mathcal{R}_{\alpha,p}(T)} + L_\mathcal{A} C_{\mathrm{em}} r \|v_1-v_2\|_{\mathcal{M}\mathcal{R}_{\alpha,p}(T)}\right)\\
& \le\frac{1}{2} \|v_1-v_2\|_{\mathcal{M}\mathcal{R}_{\alpha,p}(T)}.
\end{align*}
By Banach's contraction mapping theorem, there is a unique solution $u\in \mathcal{M}\mathcal{R}_{\alpha,p}(T).$
\end{proof}

\begin{rmk}
In the case $p=2$ and $\alpha>\frac{1}{2},$ we can choose $\varepsilon=0$ in the assumption \textbf{(H$_\varepsilon$)}. Indeed, it has been proven in \cite[Theorem 2.4.1]{Ya25} that the continuous embedding
$$H_\alpha(0,T;H) \hookrightarrow H^\alpha(0,T;H)$$
holds, which is stronger than the embedding \eqref{SSlob}. However, in the case $p\neq 2,$ we do not
know if we have a continuous embedding
$$W_{\alpha,p}(0,T;H) \hookrightarrow W^{\alpha,p}(0,T;H).$$
\end{rmk}
\smallskip

We now present an example for which Theorem \ref{thm:quasi} applies in a semilinear setting.

\subsubsection*{\textbf{Example.}}
We consider \(H = L^2(\Omega)\) as a base space, where $\Omega \subset \mathbb{R}^d\;(d \le 3)$ is a bounded domain with smooth boundary. We consider the following time-fractional Schrödinger equation with nonlinear diffusivity and reaction terms
\begin{equation}\label{eq_ex}
\begin{cases}
\mathrm{i}~\partial_t^\alpha \big(u-u_0\big) +\nabla\cdot\big(a(u) \nabla u\big) = f(u), 
& \text{in } (0,T)\times \Omega,\\
u = 0, & \text{in } (0,T)\times \partial \Omega,
\end{cases}
\end{equation}
where $\nabla\cdot$ denotes the divergence operator. Here, we assume that:
\begin{itemize}
    \item the diffusivity coefficient satisfies
		\begin{equation}\label{assumption 1}
			a \in W^{2,\infty}_{\mbox{loc}}(\mathbb{C},\mathbb{R})\qquad \mbox{and}\qquad a(0)>0;
		\end{equation}
		\item the reaction term satisfies
		\begin{equation}\label{assumption 2}
			f\in W^{2,\infty}_{\mbox{loc}}(\mathbb{C},\mathbb{C})\qquad \mbox{and}\qquad f(0)=0.
		\end{equation}
    \end{itemize}
A classical example of a reaction term is the cubic nonlinearity, which corresponds to a bistable reaction: \( f(u) = u - |u|^2 u. \)
        
In this case, $\mathcal{A}(u)(v)=\nabla\cdot(a(u)\nabla v)-g(u)v$, where $$g(\xi)=\begin{cases}
	\frac{f(\xi)}{\xi}, \quad &\mbox{if}\quad \xi\neq 0,\\
	f'(0), \quad &\mbox{if}\quad \xi=0,
\end{cases}$$
and $A=\mathcal{A}(0)=a(0)\Delta-g(0),$ with domain $D(A)=H^2(\Omega)\cap H^1_0(\Omega)$. 
We further assume $g(0)=f'(0)\ge 0$ so that $\sigma(-A)\subset (0,\infty)$. Note that $g \in W^{1,\infty}_{\mbox{loc}}(\mathbb{C},\mathbb{C})$ with Lipschitz constant $L_g =\frac{L_{f^\prime}}{2}$ due to $g(\xi)=\displaystyle\int_0^1 f^\prime(s\xi)\d s,\; \xi\in\mathbb{C}$.

We can also consider the equation in non-divergence form:
\begin{equation*} 
\begin{cases} \mathrm{i}~\partial_t^\alpha \big(u-u_0\big) + a(u)\Delta u = f(u), & \text{in } (0,T)\times \Omega,\\ u = 0, & \text{in } (0,T)\times \partial \Omega, \end{cases} 
\end{equation*}
for which it is sufficient to assume that $a\in W^{1,\infty}_{\mbox{loc}}(\mathbb{C},\mathbb{R})$.

\begin{prop}
Let $T>0$, $\alpha\in (0,1)$ be fixed, and assume \eqref{assumption 1}-\eqref{assumption 2}. Then there exist $p>\frac{1}{\alpha}$ and $\varepsilon^\prime>0$ such that for every $u_0\in \overline{B}_{\mathcal{X}_{\alpha,p}}(\varepsilon^\prime)$,
%$u_0\in \mathcal{X}_{\alpha,p}$ with $ \|u_0\|_{\mathcal{X}_{\alpha,p}} \le \varepsilon',$ 
the equation \eqref{eq_ex} has a unique solution $u\in \mathcal{M}\mathcal{R}_{\alpha,p}(T).$
\end{prop}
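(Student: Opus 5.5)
The plan is to reduce the statement to Theorem \ref{thm:quasi}. We rewrite \eqref{eq_ex} in the abstract form \eqref{nfseq0} and then verify hypothesis \textbf{(H$_\varepsilon$)} for a suitable choice of $p$ and $\varepsilon$.

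Multiplying \eqref{eq_ex} by $-\mathrm{i}$ gives $\partial_t^\alpha(u-u_0)-\mathrm{i}\mathcal{A}(u)u=0$ with $\mathcal{A}(u)v=\nabla\cdot(a(u)\nabla v)-g(u)v$. The linear part $A=a(0)\Delta-g(0)$ on $D(A)=H^2(\Omega)\cap H^1_0(\Omega)$ is self-adjoint. It has compact resolvent by Rellich's theorem, and it is negative definite because $a(0)>0$ and $g(0)\ge 0$. Hence all results of Section \ref{sec5} apply.

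The key choice is $p$. Since $\alpha<1$, we take $p$ large, with $p>\frac{1}{\alpha}$, and then fix $0<\varepsilon<\alpha-\frac1p$ such that
\[
\theta:=1-\frac{1}{(\alpha-\varepsilon)p}
\]
is close to $1$. The interpolation space $\mathcal{X}_{\alpha-\varepsilon,p}=(L^2,H^2\cap H^1_0)_{\theta,p}$ embeds continuously into $H^{2\theta-\delta}(\Omega)$ for any small $\delta>0$; this uses the known identification of real interpolation spaces of $L^2$ and $H^2$ with Besov spaces $B^{2\theta}_{2,p}$. Requiring $2\theta-\delta>\frac d2+1$, which is possible since $d\le 3$ forces $\frac d2+1\le\frac52<2$, gives
\[
\mathcal{X}_{\alpha-\varepsilon,p}\hookrightarrow W^{1,\infty}(\Omega)\cap C(\overline\Omega).
\]
This embedding is what makes $\mathcal{A}$ well behaved. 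It is achievable precisely because $\theta\to1$ as $p\to\infty$, which is why the proposition only asserts the existence of some $p$.

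Next we verify Lipschitz continuity. For $w\in\mathcal{X}_{\alpha-\varepsilon,p}$ and $v\in D(A)$ we expand
\[
\mathcal{A}(w)v=a(w)\Delta v+a'(w)\nabla w\cdot\nabla v-g(w)v,
\]
where $a'(w)\nabla w$ stands for the chain rule through the real and imaginary parts of $w$. Now take $w_1,w_2$ in a ball of radius $R$ in $\mathcal{X}_{\alpha-\varepsilon,p}$; their sup norms and $W^{1,\infty}$ norms are then bounded by $CR$. We estimate the three terms separately.

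1. For the principal term, $\|(a(w_1)-a(w_2))\Delta v\|_{L^2}\le L_a\|w_1-w_2\|_\infty\|\Delta v\|_{L^2}$, using the local Lipschitz property of $a$.
2. For the first-order term we split
\[
a'(w_1)\nabla w_1-a'(w_2)\nabla w_2=(a'(w_1)-a'(w_2))\nabla w_1+a'(w_2)\nabla(w_1-w_2).
\]
This is bounded in $L^\infty$ by $C_R\|w_1-w_2\|_{W^{1,\infty}}$, using that $a'$ is locally Lipschitz since $a\in W^{2,\infty}_{\mathrm{loc}}$. We then multiply by $\|\nabla v\|_{L^2}\le C\|v\|_{D(A)}$.
3. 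For the zeroth-order term, $g$ is locally Lipschitz, as noted in the paper, so $\|(g(w_1)-g(w_2))v\|_{L^2}\le C_R\|w_1-w_2\|_\infty\|v\|_{L^2}$.

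Combining these three bounds with the embedding above gives
\[
\|\mathcal{A}(w_1)-\mathcal{A}(w_2)\|_{\mathcal{B}(D(A),H)}\le C_R\|w_1-w_2\|_{\mathcal{X}_{\alpha-\varepsilon,p}}.
\]
The same computation shows that $\mathcal{A}(w)\in\mathcal{B}(D(A),H)$. Together with $\mathcal{A}(0)=A$, this establishes \textbf{(H$_\varepsilon$)}, and Theorem \ref{thm:quasi} then yields $\varepsilon'$ and the unique solution in $\mathcal{M}\mathcal{R}_{\alpha,p}(T)$.

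The main obstacle is the embedding $\mathcal{X}_{\alpha-\varepsilon,p}\hookrightarrow W^{1,\infty}(\Omega)$. Two points need care. First, the boundary condition built into $D(A)$ must not obstruct the Besov identification; this is handled by the fact that $(L^2,H^2\cap H^1_0)_{\theta,p}$ is a subspace of $(L^2,H^2)_{\theta,p}$, so only the inclusion in one direction is needed. Second, the Sobolev embedding $B^{s}_{2,p}\hookrightarrow C^1(\overline\Omega)$ for $s>\frac d2+1$ must be invoked with a strict inequality, which the choice of large $p$ and small $\varepsilon$ provides.
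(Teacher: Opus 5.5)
Your reduction to Theorem \ref{thm:quasi} and your three-term splitting of $\mathcal{A}(w_1)v-\mathcal{A}(w_2)v$ match the paper. However, the embedding your first-order estimate rests on is false for $d=2,3$. You justify $2\theta-\delta>\frac d2+1$ by writing $\frac d2+1\le\frac52<2$, but $\frac52>2$. Since $\theta=1-\frac{1}{(\alpha-\varepsilon)p}<1$, the smoothness index $2\theta$ of $B^{2\theta}_{2,p}$ is always below $2$, so $2\theta>\frac d2+1$ can hold only for $d=1$. No choice of $p$ rescues this. Indeed $D(A)=H^2(\Omega)\cap H^1_0(\Omega)\hookrightarrow\mathcal{X}_{\alpha-\varepsilon,p}$, so $\mathcal{X}_{\alpha-\varepsilon,p}\hookrightarrow W^{1,\infty}(\Omega)$ would force $H^2\cap H^1_0\hookrightarrow W^{1,\infty}$, which fails for $d\ge 2$. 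Consequently your bound on $(a'(w_1)-a'(w_2))\nabla w_1\cdot\nabla v+a'(w_2)\nabla(w_1-w_2)\cdot\nabla v$ is unjustified, since it pairs $\nabla w_i\in L^\infty$ with $\nabla v\in L^2$. The same gap affects your claim that $\mathcal{A}(w)\in\mathcal{B}(D(A),H)$.

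The fix, which is what the paper does, is to shift integrability from $w$ onto $v$. Require only $\mathcal{X}_{\alpha-\varepsilon,p}\hookrightarrow L^\infty(\Omega)\cap W^{1,4}(\Omega)$. Via $B^{2\theta}_{2,p}\hookrightarrow H^{2\theta-\delta}$, this needs $2\theta-\delta>\frac d2$ and $2\theta-\delta\ge 1+\frac d4$. For $d\le 3$ both thresholds are at most $\frac74<2$, so they hold for $p$ large and $\varepsilon$ small. Then estimate the first-order terms by H\"older with exponents $L^\infty\cdot L^4\cdot L^4$, for example $\|(a'(w_1)-a'(w_2))\nabla w_1\cdot\nabla v\|_{L^2}\le L_{a'}\|w_1-w_2\|_{L^\infty}\|\nabla w_1\|_{L^4}\|\nabla v\|_{L^4}$. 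Control $\|\nabla v\|_{L^4}\le C\|v\|_{H^2}\le C\|v\|_{D(A)}$ by the Sobolev embedding $H^2(\Omega)\hookrightarrow W^{1,4}(\Omega)$, which holds for $d\le 3$ (indeed $d\le 4$). Your treatment of the principal and zeroth-order terms needs only $L^\infty$ control of $w_1-w_2$ and is fine as written. With this change, the rest of your argument goes through.
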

\begin{proof}
We will prove that the quasilinear operator $\mathcal{A}$ satisfies the assumption $\textbf{(H$_\varepsilon$)}.$ For this purpose, we first prove the continuous embedding 
\begin{equation}\label{embounded}
    \mathcal{X}_{\alpha-\varepsilon,p}\hookrightarrow L^{\infty}(\Omega)\cap W^{1,4}(\Omega)
\end{equation}
for $p$ large enough, to estimate nonlinear gradient terms. Indeed, using the embedding $D(A)\hookrightarrow H^2(\Omega)$, we obtain 
\begin{equation}\label{embesov}
    \mathcal{X}_{\alpha-\varepsilon,p}\hookrightarrow (L^2(\Omega), H^2(\Omega))_{1-\frac{1}{(\alpha-\varepsilon) p},\,p}=B_{2,p}^{s}(\Omega),
\end{equation}
where $B_{2,p}^{s}(\Omega)$ denotes the Besov space (see, e.g., Adams and Fournier \cite[\S 7.30]{AF03}), and $s=2\left(1-\frac{1}{(\alpha-\varepsilon) p}\right)$.

Now, we find $p$ such that $B_{2,p}^{s}(\Omega)\hookrightarrow W^{1,4}(\Omega)$. Let $u\in B_{2,p}^{s}(\Omega)$ and $\epsilon\in [\frac{1}{2},1)$. By $B_{2,p}^{s}(\Omega)\hookrightarrow H^{s-\epsilon}(\Omega)$, we have $u,\;\nabla u\in H^{s-\epsilon-1}(\Omega)$.
Given $d\leq 3$ and \cite[Theorem 6.7]{DiN12}, we obtain that $u\in W^{1,4}(\Omega)$ and $\|u\|_{W^{1,4}(\Omega)}\le C\|u\|_{H^{s-\epsilon-1}(\Omega)},$ provided that $\frac{d}{d-2(s-\epsilon-1)} \ge 2.$ This condition is satisfied by choosing $p\ge \frac{8d}{(4(1-\epsilon)-d)(\alpha-\varepsilon)}=:p_1.$ Moreover, choosing $p>\frac{4}{(4-d)(\alpha-\varepsilon)}=:p_2,$ we have by \cite[Theorem 4.6.1, p.327]{Tr78}, $B_{2,p}^{s}(\Omega)\hookrightarrow L^\infty(\Omega).$ Therefore, by \eqref{embesov}, for all $p>\max(\frac{1}{\alpha-\varepsilon},p_1,p_2),$ we obtain the claim \eqref{embounded}.

Now, let $B\subset \mathcal{X}_{\alpha-\varepsilon,p}$ be a bounded set, and show that $\mathcal{A}$ is Lipschitz continuous on $B$. By the Sobolev embedding \eqref{embounded}, there is $r_0>0$ such that $B\subset \overline{B}_{L^{\infty}(\Omega)\cap W^{1,4}(\Omega)}(r_0)$.
%$\|z\|_{L^{\infty}(\Omega)\cap W^{1,4}(\Omega)}\leq c_{0}$ for all $z\in B$. 
Let $y\in D(A)$ and $z, w\in B$. Obviously, we have
\begin{align*}
	\mathcal{A}(z)y-\mathcal{A}(w)y=&(a(z)-a(w))\Delta y + (a^{\prime}(z)-a^{\prime}(w))\nabla z\cdot\nabla y \\
    &+a^{\prime}(w)\nabla(z-w)\cdot\nabla y-(g(z)-g(w))y.
\end{align*}
Using $a$, $a^{\prime}$ and $f^\prime$ are Lipschitz on the closed disk $\overline{D}(0,r_0)$ with Lipschitz constants denoted by $L_a$, $L_{a'}$, and $L_{f'}$, Hölder's inequality and the above Sobolev embedding $H^{2}(\Omega)\hookrightarrow W^{1,4}(\Omega)$ (see, e.g., \cite[Theorem 4.12]{AF03}), we obtain
			\begin{align*}
				&\|\mathcal{A}(z)y-\mathcal{A}(w)y\|_{L^{2}(\Omega)}\\
				&\leq L_a\|z-w\|_{L^{\infty}(\Omega)}\|y\|_{H^2(\Omega)} +L_{a^\prime} \|z-w\|_{L^{\infty}(\Omega)}\| z\|_{W^{1,4}(\Omega)}\|y\|_{W^{1,4}(\Omega)} \\
				& \quad +\|a^\prime\|_{L^\infty(\overline{D}(0,r_0))}\|z-w\|_{W^{1,4}(\Omega)}\| y\|_{W^{1,4}(\Omega)} + \frac{L_{f^\prime}}{2}\|z-w\|_{L^{\infty}(\Omega)}\|y\|_{L^{2}(\Omega)}\\
				%&\leq C \left(\|z-w\|^{2}_{\mathcal{X}_{\alpha-\varepsilon,p}}\|y\|^{2}_{H^{2}(\Omega)} + c_0^2\|z-w\|^{2}_{\mathcal{X}_{\alpha-\varepsilon,p}}\|y\|^{2}_{H^{2}(\Omega)}  \right.\\
				%&\left. \quad +\| z-w\|^{2}_{\mathcal{X}_{\alpha-\varepsilon,p}}\| y\|^{2}_{H^{2}(\Omega)} +\| z-w\|^{2}_{\mathcal{X}_{\alpha-\varepsilon,p}}\| y\|^{2}_{H^{2}(\Omega)}\right)\\
				&\leq C\|z-w\|_{\mathcal{X}_{\alpha-\varepsilon,p}}\|y\|_{H^{2}(\Omega)},
			\end{align*}
            where $C>0$ depends on $a$, $f$, and $B$.
\end{proof}

%\noindent\textbf{Example.} Let $\Omega \subset \mathbb{R}^d$ be a bounded domain with a smooth boundary. We consider $H=L^2(\Omega)$ with the standard inner product. We set $A=\Delta_D$ for the Dirichlet Laplacian with domain $D(A)=H^2(\Omega)\cap H^1_0(\Omega)$ and denote
%$$\mathcal{A}(u(t))=\Delta_D-**************.$$
%\textcolor{red}{
%$$\mathcal{A}(u)(v)=\nabla\cdot(a(u)\nabla v).$$
%where ....................
%}\\
%In this case, for $p=2$ and $\alpha>\frac{1}{2},$ the assumption %\textbf{(H$_\varepsilon$)} is satisfied with $\varepsilon=0$.

\subsection{Semilinear equations}
We consider the following semilinear fractional Schr\"odinger equation
\begin{equation}\label{semieq0}
\partial_t^\alpha (u(t)-u_0) -\mathrm{i} Au(t)=F(u(t)), \qquad t \in (0,T),
\end{equation}
where we consider the following assumption:
\begin{itemize}
     \item[\textbf{(H)}] $F: \mathcal{M}\mathcal{R}_{\alpha,p}(T) \to L^p(0,T;H)$ satisfies the following continuity property: for any $\epsilon>0$, there
exists a constant $C=C(\epsilon,p)>0$ such that
     \begin{equation}
        \|F(v)-F(w)\|_{L^p(0,T;H)}\le \epsilon\|v-w\|_{\mathcal{M}\mathcal{R}_{\alpha,p}(T)}+C\|v-w\|_{L^p(0,T;H)}
    \end{equation}
     %\begin{equation}
      %  \|F(v)-F(w)\|_{L^p(0,T;H)}\le \epsilon\|v-w\|_{\mathcal{M}\mathcal{R}_{\alpha,p}(T)}+C\|(v-w)-(v(0)-w(0))\|_{L^p(0,T;H)}
    %\end{equation}
    for all $v,w\in \mathcal{M}\mathcal{R}_{\alpha,p}(T).$
\end{itemize}
A sufficient condition under which this hypothesis holds is given by the following (see \cite[Lemme 8.2]{Are15}):
    \begin{equation}
        F\in \mathcal{B}(\mathcal{M}\mathcal{R}_{\alpha,p}(T), L^p(0,T;H))\; \mbox{is compact}.
    \end{equation}
    Another simple assumption implying \textbf{(H)} is the following (see \cite[Example 4.1]{Ach22}):
   $$F : D(A) \to H \quad \text{is globally Lipschitz}.$$
\begin{rmk}
Although we follow \cite{Ach22} in this part, we address the following issues:
\begin{itemize}
    \item In \cite[p.~14]{Ach22}, the maximal regularity space $MR(\alpha,p,\tau)$ depends on a fixed initial datum $u_0$ (see also \cite[p.~8]{Za09} and \cite[p.~528]{Ach24}).
    \item The trace space $Tr^p_\alpha$ of initial data is not well defined (see \cite[p.~14]{Ach22}). The natural space for $\alpha>\frac{1}{p}$ is given by $\mathcal{X}_{\alpha,p}$ defined in \eqref{Interp}.
    \item The associated quantity $\|\cdot\|_{MR(\alpha,p,\tau)}$ in \cite[p.~14]{Ach22} is not a norm when $\alpha<\frac{1}{p}$ and $u_0\neq 0$.
\end{itemize}
Therefore, the results in \cite{Ach22} hold only for $u_0=0$. This is not the case for our maximal regularity space $\mathcal{MR}_{\alpha,p}(T)$ defined in \eqref{MRsp}.
\end{rmk}

Next, we state the well-posedness of \eqref{semieq0} for small times.
\begin{theorem}\label{ThmAch}
Let $\alpha\in (0,1)$, $p>\frac{1}{\alpha}$ and assume the assumption \textbf{(H)}. Then, there exists $T_1>0$ such that, for all $u_0\in \mathcal{X}_{\alpha,p}$ and all $T\in (0,T_1)$, the equation \eqref{semieq0} has a unique solution $u\in \mathcal{M}\mathcal{R}_{\alpha,p}(T).$ %Moreover, there exists a constant $C=C(\alpha,p)>0$, independent of $u_0$ and $F$, such that
%\[
%\|u\|_{\mathcal{M}\mathcal{R}_{\alpha,p}(T)}
%\le
%C
%\left(\|u_0\|+\|F(0)\|_{L^{p}(0,T;H)}
%\right).\]
\end{theorem}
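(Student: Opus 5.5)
We reduce \eqref{semieq0} to a fixed-point problem for the linear solution operator of Theorem \ref{pithmMR}. Let $\mathcal{L}(f,u_0)$ denote the solution of \eqref{pifseq}. By Theorem \ref{pithmMR}, $\mathcal{L}$ maps $L^p(0,T;H)\times\mathcal{X}_{\alpha,p}$ boundedly into $\mathcal{M}\mathcal{R}_{\alpha,p}(T)$. Fix $u_0\in\mathcal{X}_{\alpha,p}$ and consider the affine subspace
$$\mathcal{E}_T:=\{v\in\mathcal{M}\mathcal{R}_{\alpha,p}(T)\colon v(0)=u_0\},$$
which is closed in $\mathcal{M}\mathcal{R}_{\alpha,p}(T)$. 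We look for a fixed point of $\Phi(v):=\mathcal{L}(F(v),u_0)$ in $\mathcal{E}_T$. Since $\mathcal{L}(\cdot,u_0)$ produces functions with $u(0)=u_0$ (Proposition \ref{propc} together with Theorem \ref{thmMRh}), $\Phi$ maps $\mathcal{E}_T$ into itself. No smallness of $u_0$ is needed, because we will show that $\Phi$ is a contraction on all of $\mathcal{E}_T$.

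For $v,w\in\mathcal{E}_T$, linearity gives $\Phi(v)-\Phi(w)=\mathcal{L}(F(v)-F(w),0)$. By \textbf{(H)},
$$\|\Phi(v)-\Phi(w)\|_{\mathcal{M}\mathcal{R}_{\alpha,p}(T)}\le M_T\big(\epsilon\|v-w\|_{\mathcal{M}\mathcal{R}_{\alpha,p}(T)}+C(\epsilon)\|v-w\|_{L^p(0,T;H)}\big),$$
where $M_T$ is the norm of $f\mapsto\mathcal{L}(f,0)$. The difference $z:=v-w$ satisfies $z(0)=0$, so $z\in W_{\alpha,p}(0,T;H)$ and $z=J^\alpha\partial_t^\alpha z$. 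Young's inequality then gives
$$\|z\|_{L^p(0,T;H)}\le \frac{T^\alpha}{\Gamma(\alpha+1)}\|z\|_{W_{\alpha,p}(0,T;H)}\le \frac{T^\alpha}{\Gamma(\alpha+1)}\|z\|_{\mathcal{M}\mathcal{R}_{\alpha,p}(T)}.$$
We first choose $\epsilon=1/(4M)$ and then $T_1$ so that $M\,C(\epsilon)\,T_1^\alpha/\Gamma(\alpha+1)\le 1/4$. This gives contraction constant $\le 1/2$ for every $T<T_1$, and the Banach fixed point theorem on the complete metric space $\mathcal{E}_T$ yields existence and uniqueness in $\mathcal{M}\mathcal{R}_{\alpha,p}(T)$. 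Uniqueness over the full space follows because any solution has $u(0)=u_0$ by continuity and the meaning of the initial condition, so it lies in $\mathcal{E}_T$.

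The main obstacle is the uniformity of $M_T$ in $T$. The choice of $\epsilon$ must not depend on $T$, and the constant $C(\epsilon,p)$ in \textbf{(H)} must be independent of $T$ as well; we read \textbf{(H)} this way, or else fix a reference $T_0$ and consider restrictions. To get a $T$-uniform bound on $M_T$ for zero initial data, we extend $f\in L^p(0,T;H)$ by zero to $(0,T_0)$ with $T\le T_0$. The solution on $(0,T_0)$ restricted to $(0,T)$ coincides with the solution on $(0,T)$ by causality of the convolution \eqref{representation}. Restriction does not increase the $W_{\alpha,p}$ norm, since $\partial_t^\alpha$ of the restriction equals the restriction of $\partial_t^\alpha$. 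Hence $M_T\le M_{T_0}=:M$. With $M$ fixed, $\epsilon$ is chosen from $M$ alone, and the smallness then comes entirely from the factor $T^\alpha$ in the lower-order term.
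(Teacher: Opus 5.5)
Your argument is correct, but it takes a different and leaner route than the paper.

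The paper keeps $\Phi$ on the whole space $\mathcal{M}\mathcal{R}_{\alpha,p}(T)$, so in estimating $\|v-w\|_{L^p(0,T;H)}$ it must split off the constant $v(0)-w(0)$. That split costs a factor $T^{1/p}$, and this is exactly where the paper's smallness of $T$ comes from. The paper then controls the remainder by the Volterra-type inequality \eqref{lm:semi}. It iterates that inequality using the convolution powers $k_\alpha^{*n}$ and a root-test argument, and concludes that some iterate $\Phi^n$ is a contraction.

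You instead restrict $\Phi$ to the closed affine set $\mathcal{E}_T$ of functions with initial value $u_0$, so all differences lie in $W_{\alpha,p}(0,T;H)$. The crude bound $\|J^\alpha g\|_{L^p(0,T;H)}\le \frac{T^\alpha}{\Gamma(\alpha+1)}\|g\|_{L^p(0,T;H)}$ then makes $\Phi$ itself a contraction once $T$ is small. This is precisely what \eqref{lm:semi} gives if one bounds the inner integral by $\int_0^T$. Uniqueness in the full space is recovered because every fixed point of $\Phi$ starts at $u_0$.

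Since the paper needs small $T$ anyway, its Volterra iteration buys nothing extra for this statement. That device is normally used to avoid small-time restrictions. Moreover, running its induction requires the one-step estimate on every subinterval $(0,r)$, which presupposes a causal (Volterra) structure of $F$. Your one-step contraction needs no such structure.

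Your proposal also makes explicit two points the paper leaves tacit when it chooses $T\le (4C\|\mathcal{L}\|)^{-p}$, even though $\|\mathcal{L}\|$ a priori depends on $T$:
\begin{itemize}
\item $\|\mathcal{L}(\cdot,0)\|$ is bounded uniformly for $T\le T_0$. Your zero-extension argument, using causality of \eqref{representation} and the fact that restriction commutes with $J^\alpha$, is the right way to see this.
\item The constant in \textbf{(H)} is independent of $T$.
\end{itemize}
Just make sure to take $T_1\le T_0$ so that the bound $M_T\le M_{T_0}$ applies.
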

\begin{rmk}
In the above theorem, we only consider small times, in contrast to \cite[Theorem~4.2]{Ach22}, whose proof omits the initial datum in the final estimate on p.~20.
\end{rmk}

To prove Theorem \ref{ThmAch}, we need some preliminary estimates. For $\alpha>0$, we set
\[
k_\alpha(t)=\begin{cases}
    t^{\alpha-1},\quad & t>0,\\
    0, \quad & t\le 0,
\end{cases}
\]
and we denote by $k_\alpha^{*n}$ the $n$-fold convolution of $k_\alpha$ with itself, that is,
\[
k_\alpha^{*n}
:= \underbrace{k_\alpha * k_\alpha * \cdots * k_\alpha}_{n\ \text{times}}.
\]
Then, for all $t>0$, the following explicit formula holds
\begin{equation}
    k_\alpha^{*n}(t)
= \frac{\Gamma(\alpha)^n}{\Gamma(n\alpha)}\, t^{n\alpha-1}. \label{n_convolution}
\end{equation}
We prove the following key lemma.
\begin{lem}
    For all $u\in W_{\alpha,p}(0,T;H)$, we have
    \begin{align}
		\int_0^T \|u(t)\|^p\d t
		&\le \frac{T^{\frac{\alpha p}{q}}}{\alpha^{\frac{p}{q}}\Gamma(\alpha)^p}\int^T_0 k_{\alpha}(T-r)
		\int_0^r\|\partial_t^\alpha u(s)\|^p\;\d s\;\d r. \label{lm:semi}
	\end{align}
\end{lem}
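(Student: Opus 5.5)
Write $u = J^\alpha g$ with $g := \partial_t^\alpha u \in L^p(0,T;H)$. This is possible by the definition of $W_{\alpha,p}(0,T;H)$. Then
\[
u(t)=\frac{1}{\Gamma(\alpha)}\int_0^t k_\alpha(t-s)\,g(s)\,\d s ,
\]
and the plan is to bound $\|u(t)\|^p$ pointwise by a weighted average of $\|g\|^p$. Applying H\"older's inequality with the splitting $k_\alpha = k_\alpha^{1/q}k_\alpha^{1/p}$ gives
\[
\|u(t)\|\le \frac{1}{\Gamma(\alpha)}\Big(\int_0^t k_\alpha(t-s)\,\d s\Big)^{\frac1q}\Big(\int_0^t k_\alpha(t-s)\|g(s)\|^p\,\d s\Big)^{\frac1p}.
\]
Since $\int_0^t k_\alpha = t^\alpha/\alpha \le T^\alpha/\alpha$, we obtain
\[
\|u(t)\|^p\le \frac{T^{\frac{\alpha p}{q}}}{\alpha^{\frac pq}\Gamma(\alpha)^p}\,\big(k_\alpha * \|g\|^p\big)(t).
\]
This already produces exactly the constant in \eqref{lm:semi}.

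Next, integrate over $(0,T)$. The right-hand side becomes $\int_0^T (k_\alpha*\phi)(t)\,\d t$ with $\phi:=\|g\|^p\in L^1(0,T)$. The identity to establish is
\[
\int_0^T (k_\alpha*\phi)(t)\,\d t = \big(1*k_\alpha*\phi\big)(T)=\big(k_\alpha*(1*\phi)\big)(T)=\int_0^T k_\alpha(T-r)\int_0^r\phi(s)\,\d s\,\d r .
\]
This follows from commutativity and associativity of convolution on the half-line, or directly by Fubini (Tonelli, since everything is nonnegative). In the direct computation, both sides equal $\int_0^T\phi(s)\int_s^T k_\alpha(t-s)\,\d t\,\d s$: on the right one substitutes and swaps the order of the $r$ and $s$ integrals, noting that $\int_s^T k_\alpha(T-r)\,\d r=\int_0^{T-s}k_\alpha=\int_s^T k_\alpha(t-s)\,\d t$.

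Combining the two steps yields \eqref{lm:semi}. The only delicate points are measurability and the finiteness needed for Tonelli, and nonnegativity takes care of both. The main step, modest as it is, is choosing the H\"older splitting that keeps the kernel $k_\alpha$ inside the $p$-th power. This matters because the Gronwall-type iteration presumably used afterwards (via \eqref{n_convolution}) needs the convolution structure $k_\alpha*\int_0^r$ rather than a plain $L^p$ norm.
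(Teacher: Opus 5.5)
Your proposal is correct and follows the paper's proof essentially step for step. You write $u=J^\alpha\partial_t^\alpha u$, apply H\"older with the splitting $k_\alpha=k_\alpha^{1/q}k_\alpha^{1/p}$ and the bound $t^\alpha/\alpha\le T^\alpha/\alpha$, then integrate over $(0,T)$ and use Tonelli to identify both sides with $\int_0^T \frac{(T-s)^\alpha}{\alpha}\,\|\partial_t^\alpha u(s)\|^p\,\mathrm{d}s$; the paper writes this middle quantity as $\alpha^{-1}\int_0^T k_{\alpha+1}(T-s)\|\partial_t^\alpha u(s)\|^p\,\mathrm{d}s$.
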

\begin{proof}
    Let $u\in W_{\alpha,p}(0,T;H)$. From \eqref{deffd}, we have
	\begin{equation*}
	u=J^\alpha \partial_t^\alpha u,\qquad \mbox{ in} \quad L^p(0,T;H).
	\end{equation*}
	Then, for a.e. $t\in (0,T),$ and by H\"older's inequality, we obtain
	\begin{align*}
		\|u(t)\|&= \|J^\alpha \partial_t^\alpha u (t)\|\\
		%&\le \frac{1}{\Gamma(\alpha)}\int^t_0 k_\alpha(t-s)\|\partial_t^\alpha u(s)\|\, \d s \\
		&\le \frac{1}{\Gamma(\alpha)} \left(\int^t_0 k_\alpha(t-s)
		\, \d s \right)^{\frac{1}{q}} \left(\int^t_0 k_\alpha(t-s)
		\|\partial_t^\alpha u(s)\|^p\, \d s \right)^{\frac{1}{p}}\\
		%&= \frac{t^{\frac{\alpha}{q}}}{\alpha^{\frac{1}{q}}\Gamma(\alpha)}\left(\int^t_0 k_\alpha(t-s)
		%\|\partial_t^\alpha u(s)\|^p\, \d s \right)^{\frac{1}{p}}\\
		&\le \frac{T^{\frac{\alpha}{q}}}{\alpha^{\frac{1}{q}}\Gamma(\alpha)}\left(\int^t_0 k_\alpha(t-s)
		\|\partial_t^\alpha u(s)\|^p\, \d s \right)^{\frac{1}{p}}.
	\end{align*}
	Then integrating over $(0,T)$ and applying Tonelli’s theorem, we obtain 
    \begin{align*}
        \int_0^T \|u(t)\|^p\d t
		&\le \frac{T^{\frac{\alpha p}{q}}}{\alpha^{\frac{p}{q}}\Gamma(\alpha)^p}\int_0^T\int^t_0 k_\alpha(t-s)
		\|\partial_t^\alpha u(s)\|^p\, \d s\;\d t\\
        &= \frac{T^{\frac{\alpha p}{q}}}{\alpha^{\frac{p}{q}}\Gamma(\alpha)^p}\int^T_0\int_s^T k_\alpha(t-s)
		\|\partial_t^\alpha u(s)\|^p\, \d t\;\d s\\
        &=\frac{T^{\frac{\alpha p}{q}}}{\alpha^{\frac{p}{q}+1}\Gamma(\alpha)^p}\int^T_0 k_{\alpha+1}(T-s)
		\|\partial_t^\alpha u(s)\|^p\;\d s \\
        &=\frac{T^{\frac{\alpha p}{q}}}{\alpha^{\frac{p}{q}}\Gamma(\alpha)^p}\int^T_0\int_s^T k_{\alpha}(T-r)\d r
		\|\partial_t^\alpha u(s)\|^p\;\d s\\
        &= \frac{T^{\frac{\alpha p}{q}}}{\alpha^{\frac{p}{q}}\Gamma(\alpha)^p}\int^T_0 k_{\alpha}(T-r)
		\int_0^r\|\partial_t^\alpha u(s)\|^p\;\d s\;\d r.
    \end{align*}
    This yields the desired estimate.
\end{proof}

\begin{proof}[Proof of Theorem \ref{ThmAch}]
We define the linear operator $\mathcal{L}: L^p(0,T;H)\times\mathcal{X}_{\alpha,p}\to \mathcal{M}\mathcal{R}_{\alpha,p}(T)$ by  $\mathcal{L}(f,u_0)=u,$ where $u$ is the solution to \eqref{fseq0} associated with $u_0$ and $f$. By the maximal regularity (Theorem \ref{pithmMR}), we have $$\mathcal{L}\in \mathcal{B}\left(L^p(0,T;H)\times \mathcal{X}_{\alpha,p}, \mathcal{M}\mathcal{R}_{\alpha,p}(T)\right).$$ 
We fix $u_0\in \mathcal{X}_{\alpha,p}$ and define $\Phi:\mathcal{M}\mathcal{R}_{\alpha,p}(T)\to \mathcal{M}\mathcal{R}_{\alpha,p}(T)$ by
$$\Phi(v):=\mathcal{L}(F(v),u_0).$$
Note that, for all $v\in \mathcal{M}\mathcal{R}_{\alpha,p}(T)$, we have $F(v)\in L^p(0,T;H)$, so $\Phi$ is well defined. Then $u$ solves \eqref{semieq0} if and only if $\Phi(u)=u$. Therefore, we will apply the Banach contraction mapping theorem for iterates of $\Phi$.

Let $v,w\in \mathcal{M}\mathcal{R}_{\alpha,p}(T)$. Then,
 \begin{align*}
     &\|\Phi(v)-\Phi(w)\|_{\mathcal{M}\mathcal{R}_{\alpha,p}(T)}= \|\mathcal{L}(F(v)-F(w),0)\|_{\mathcal{M}\mathcal{R}_{\alpha,p}(T)}\\
     &\le \|\mathcal{L}\| \|F(v)-F(w)\|_{L^p(0,T;H)}\\
     &\le \epsilon \|\mathcal{L}\|\|v-w\|_{\mathcal{M}\mathcal{R}_{\alpha,p}(T)}+C\|\mathcal{L}\|\|v-w\|_{L^p(0,T;H)}\\
     &\le \epsilon \|\mathcal{L}\|\|v-w\|_{\mathcal{M}\mathcal{R}_{\alpha,p}(T)}\\
     &\quad+C\|\mathcal{L}\|\left(\|(v-w)-(v(0)-w(0))\|_{L^p(0,T;H)}+ \|v(0)-w(0)\|_{L^p(0,T;H)}\right)\\
     &= \epsilon \|\mathcal{L}\|\|v-w\|_{\mathcal{M}\mathcal{R}_{\alpha,p}(T)}\\
     &\quad+C\|\mathcal{L}\|\left(\|(v-w)-(v(0)-w(0))\|_{L^p(0,T;H)}+ T^{\frac{1}{p}} \|v(0)-w(0)\|\right).
 \end{align*}
Using the estimate \eqref{lm:semi}, we obtain
\begin{align*}
     &\|\Phi(v)-\Phi(w)\|_{\mathcal{M}\mathcal{R}_{\alpha,p}(T)}\le (\epsilon +CT^{\frac{1}{p}})\|\mathcal{L}\|\|v-w\|_{\mathcal{M}\mathcal{R}_{\alpha,p}(T)}\\
     &+C \|\mathcal{L}\|\frac{T^{\frac{\alpha }{q}}}{\alpha^{\frac{1}{q}}\Gamma(\alpha)^p}\left(\int^T_0 k_{\alpha}(T-r)
		\int_0^r\|\partial_t^\alpha ((v-w)-(v(0)-w(0)))(s)\|^p\;\d s\;\d r\right)^{\frac{1}{p}} \\
    &\le (\epsilon +CT^{\frac{1}{p}})\|\mathcal{L} \|\|v-w\|_{\mathcal{M}\mathcal{R}_{\alpha,p}(T)}
     +C \|\mathcal{L}\|\frac{T^{\frac{\alpha }{q}}}{\alpha^{\frac{1}{q}}\Gamma(\alpha)^p}\left(\int^T_0 k_{\alpha}(T-r)
		\|v-w\|^p_{\mathcal{M}\mathcal{R}(r)}\;\d r\right)^{\frac{1}{p}}\\
        & =\frac{1}{2}\|v-w\|_{\mathcal{M}\mathcal{R}_{\alpha,p}(T)}
     + \gamma\left(\int^T_0 k_{\alpha}(T-r)
		\|v-w\|^p_{\mathcal{M}\mathcal{R}(r)}\;\d r\right)^{\frac{1}{p}},
 \end{align*}
 where we choose $\epsilon=\frac{1}{4\|\mathcal{L}\|}$, $0<T\le \frac{1}{(4 C\|\mathcal{L}\|)^p}$ and set $\gamma:=C \|\mathcal{L}\|\frac{T^{\frac{\alpha }{q}}}{\alpha^{\frac{1}{q}}\Gamma(\alpha)^p}$.
 
Thus, by induction, for every $n\ge 1$, we obtain
\begin{align*}
     \|\Phi^n(v)-\Phi^n(w)\|_{\mathcal{M}\mathcal{R}_{\alpha,p}(T)}
&\le \frac{1}{2^n}\|v-w\|_{\mathcal{M}\mathcal{R}_{\alpha,p}(T)}  \\
&\quad + \gamma
\sum_{j=1}^{n}
\frac{1}{2^{n-j}}
\left(
\int_0^T k_\alpha^{(*j)}(T-r)\,
\|u-v\|_{\mathcal{MR}(r)}^p\,dr
\right)^{\frac{1}{p}}\\
&\hspace{-1cm}\le \left[\frac{1}{2^n}+ \gamma
\sum_{j=1}^{n}
\frac{1}{2^{n-j}}
\left(
\int_0^T k_\alpha^{(*j)}(T-r)\,
\,dr
\right)^{\frac{1}{p}}\right]\|v-w\|_{\mathcal{M}\mathcal{R}_{\alpha,p}(T)}.  
 \end{align*}
 Using the formula \eqref{n_convolution}, we obtain
 \begin{align*}
     \|\Phi^n(v)-\Phi^n(w)\|_{\mathcal{M}\mathcal{R}_{\alpha,p}(T)}
&\le \frac{1}{2^{n}}\left[1+ \gamma
\sum_{j=1}^{n}
2^{j}
\left(
\frac{\Gamma(\alpha)^j}{\Gamma(j\alpha+1)}T^{j\alpha}
\right)^{\frac{1}{p}}\right]\|v-w\|_{\mathcal{M}\mathcal{R}_{\alpha,p}(T)}.
 \end{align*}
The convergence of the series $\displaystyle \sum_{j\ge 1}
2^{j}
\left(
\frac{\Gamma(\alpha)^j}{\Gamma(j\alpha+1)}T^{j\alpha}
\right)^{\frac{1}{p}}$ follows from Stirling's formula and the Cauchy root test. Then, for $n$ sufficiently large, 
$\Phi^n$ is a contraction. By Banach's contraction mapping theorem, there is a unique solution $u\in \mathcal{M}\mathcal{R}_{\alpha,p}(T).$
\end{proof}
The following example is a consequence of the compactness of the embedding (see \cite[Theorem 2.1~(iii)]{YaLp22})
$$
W_{\alpha,p}(0,T;H) \hookrightarrow W_{\beta,p}(0,T;H), 
\qquad 0 < \beta < \alpha.$$
\textbf{Example.} Let $\alpha, \beta\in (0,1)$, $\beta<\alpha$ and $p>\frac{1}{\alpha}$. The following equation is locally well-posed in the sense of Theorem~\ref{ThmAch}:
\begin{equation*}
\begin{cases}
\partial_t^\alpha \big(u-u_0\big) -\mathrm{i} \Delta u
+ \xi \partial_t^\beta (u-u_0)=f(u), 
& \text{in } (0,T)\times \Omega,\\
u = 0, & \text{in } (0,T)\times \partial \Omega,
\end{cases}
\end{equation*}
where $\xi\in\mathbb{C}$ and $f\in W^{1,\infty}(\mathbb{C},\mathbb{C})$.

\section{Conclusion and final comments}
In this work, we have investigated the maximal regularity problem for abstract time-fractional Schrödinger equations governed by a self-adjoint operator with compact resolvent on a Hilbert space, where the fractional derivative is of order $\alpha\in (0,1)$. We first established maximal $L^2$-regularity by exploiting refined estimates for Mittag–Leffler functions with the imaginary argument. In contrast to the classical analysis of subdiffusion equations, our approach does not rely on the complete monotonicity of Mittag–Leffler functions, a property that appears difficult to handle in the presence of the imaginary argument. We then extended these results to maximal $L^p$-regularity for any $p\in (1,\infty)$ by applying the operator-valued Mikhlin multiplier theorem.

We stress that, in contrast to the time-fractional case considered here, the classical Schrödinger equation does not enjoy maximal regularity (see Remark \ref{rmkcs}), which highlights a fundamental difference at this level between the classical and fractional models.

To illustrate the applicability of our maximal regularity results, we have proven the well-posedness for some quasilinear and semilinear Schr\"odinger equations with time-fractional derivatives. Furthermore, our results may serve as a basis for establishing well-posedness of non-autonomous time-fractional Schrödinger equations in the spirit of the recent paper \cite{Ach24}.

As a perspective for future work, we aim to extend the analysis of time-fractional Schrödinger equations to Banach space settings, with particular emphasis on well-posedness, maximal regularity, controllability and inverse problems, in a broader and more general framework; see, for instance, the recent works \cite{BCEM26, CEMYIP25}.

\subsection*{Acknowledgment}
The work was supported by Grant-in-Aid for Challenging Research (Pioneering) 21K18142 of Japan Society for the Promotion of Science.

\end{document}